\documentclass[a4paper, 11pt]{amsart}
\usepackage{amssymb, graphicx, amsthm, amsmath, mathtools}
\usepackage{stmaryrd}
\usepackage[left=3cm, right=3cm]{geometry}
\usepackage[dvipsnames]{xcolor}
\usepackage{mathalfa}
\usepackage[full]{textcomp}
\usepackage[osf]{newtxtext}
\usepackage{mhequ}
\usepackage{mathrsfs}
\usepackage{microtype}
\usepackage{tikz-cd}
\usepackage{RhysAlphabets}
\usepackage{trees}
\usepackage{booktabs}
\usepackage{cprotect}
\usepackage{hyperref}

\DeclareSymbolFont{timesoperators}{T1}{ptm}{m}{n}
\SetSymbolFont{timesoperators}{bold}{T1}{ptm}{b}{n}
\makeatletter
\renewcommand{\operator@font}{\mathgroup\symtimesoperators}
\makeatother

\newtheorem{theorem}{\bf  {Theorem}}
\newtheorem{prop}[theorem]{\bf {Proposition}}
\newtheorem{lemma}[theorem]{\bf Lemma}
\newtheorem{definition}[theorem]{\bf Definition}
\newtheorem{corollary}[theorem]{\bf Corollary}
\newtheorem{remark}[theorem]{\bf Remark}

\numberwithin{theorem}{section}

\def\bigvert{\big| \kern-.16em \big| \kern-.16em \big|}

\def\${|\kern-.16em|\kern-.16em|}

\def\eps{\varepsilon}

\def\fs{{\mathfrak{s}}}
\def\mcD{{\mathcal{D}}}

\def\dash{\leavevmode\unskip\kern0.18em--\penalty\exhyphenpenalty\kern0.18em}
\def\slash{\leavevmode\unskip\kern0.15em/\penalty\exhyphenpenalty\kern0.15em}

\makeatletter

\DeclareRobustCommand{\TitleEquation}[2]{\texorpdfstring{\StrLeft{\f@series}{1}[\@firstchar]$\if%
		b\@firstchar\boldsymbol{#1}\else#1\fi$}{#2}}

\makeatother
\title[Global Existence for gKPZ]{Pathwise Global-in-Time Existence for the generalised KPZ Equation in the Full Subcritical Regime}

\author{Jonas Sauer$^1$ and Rhys Steele$^2$}
\address{$^1$ Friedrich-Schiller-Universit\"at, Institut f\"ur Mathematik, Inselplatz 5, 07737 Jena, Germany}
\email{
 \texttt{jonas.sauer@uni-jena.de}}
\address{$^2$ Max Planck Institute for Mathematics in the Sciences, Inselstrasse 22, 04103 Leipzig, Germany}
\email{
 \texttt{steele@mis.mpg.de}
}

\subjclass[2020]{60H17, 
60L30, 35A01, 35B51.}
\begin{document}	

\begin{abstract}
	We provide a pathwise proof of global-in-time well-posedness for the generalised KPZ equation in the full subcritical regime by an adaptation of the strategy recently applied to the generalised Parabolic Anderson Model in \cite{ES26}. Since this strategy relies crucially on the assumption that control of the supremum norm is sufficient to continue the solution, the main additional ingredient required is a treatment of the initial layer for gKPZ with merely $L^\infty$ initial data, rather than the more usual setting of $C^\theta$ initial data with $\theta > 0$. Our approach is based on an expansion around a deterministic profile followed by the introduction of an integrating factor in order to remove the terms which have critical scaling at time $0$. For pedagogical purposes, we first demonstrate the proof techniques in the case of the standard $(1+1)$-dimensional KPZ equation before turning to the more computationally involved case of generalised KPZ.
\end{abstract}

\maketitle

 \medskip

\section{Introduction}

In this article, we give a pathwise proof of global-in-time well-posedness for the BPHZ solutions\footnote{or more generally for the solutions associated to any model with suitable symmetry assumptions} of the spatially periodic generalised KPZ equation
\begin{align}\label{eq:gKPZ}
	(\partial_t - \Delta) u = \Gamma(u) (\partial_x u)^2 + g(u) \partial_x u + h(u) + \sigma(u) \xi
\end{align}
on $\mathbb{R}_+ \times \mathbb{T}$ with initial data in $C^\theta(\mathbb{T})$ for any $0< \theta$ that is valid for any driving noise $\xi$ satisfying the assumptions of any of the BPHZ Theorems \cite{CH, HS, BH23} with the additional properties that as equalities in law, one has that $\xi = - \xi$ and that $\xi(t,x) = \xi(t, -x)$ and for arbitrary smooth functions\footnote{ Here $N = N(\alpha)$ is a fixed function of the noise regularity $\alpha$ which corresponds to the usual number of derivatives that need to be controlled in the regularity structure based solution theory} $\Gamma, g, h, \sigma \in C^\infty(\mathbb{R}) \cap C_b^N(\mathbb{R})$. This includes the natural examples of fractional derivatives of space-time white noise. 

Our proof is an implementation of the same strategy that was recently used to provide a surprisingly simple proof of global-in-time existence for the generalised Parabolic Anderson Model (which corresponds to the special case $\Gamma = g = h = 0$ in gKPZ) in the recent work \cite{ES26}. Roughly speaking, the argument of \cite{ES26} has two key ingredients:
\begin{enumerate}
	\item The generalised Parabolic Anderson Model has no natural choice of origin in solution space. This means that the class of equations is invariant under constant shifts $u \mapsto u+a$. This observation allows one to obtain short-time solutions to the equation with constant initial data on a time interval that is independent of the size of the initial data with the property that the deviation of solutions from the initial data on that time interval is independent of the size of the initial data.
	\item The existing local-in-time solution theory (as provided e.g. by regularity structures \cite{H0, BCCH}) for the generalised Parabolic Anderson Model has the property that the solution map is well-defined and continuous up to the explosion time of the supremum norm. Therefore, to obtain global-in-time well-posedness it suffices to rule out explosion of the supremum norm.
\end{enumerate}
These ingredients are combined by use of a comparison principle (which can be established by arguing at fixed regularisation scale and then passing to the regularisation-free limit) which allows one to bound solutions started from arbitrary initial data between barriers that are constructed from instances of the short-time solution theory for constant initial data of the form $\pm \|u(s,\cdot)\|_\infty$.

Due to the fact that this strategy uses relatively few features of the generalised Parabolic Anderson Model, one expects that it should in fact generalise to treat a wider class of equations. In this paper, we demonstrate that this expectation is correct in the case of the generalised KPZ equation. Indeed, at the formal level, it is immediate that the generalised KPZ equation has the first property desired above since shifts in solution space can be absorbed into shifts in the functions $\Gamma, g,h, \sigma$ whilst the remaining derivative terms are independent of constant shifts. This can be made mathematically rigorous using for example the solution theory provided by regularity structures with very little effort.

The second property required above is less clear and therefore the main result of this paper can't be immediately obtained by the approach of \cite{ES26} without additional work. Indeed, to the best of our knowledge, existing solution theories for the generalised KPZ equation provide a continuous solution map which is defined up to the explosion time of the $C^\theta$ norm of the solution for any strictly positive $\theta$. The reason for this restriction can already be detected at the level of the deterministic equation. Indeed, if this equation is started from merely $L^\infty$-initial data then classical regularity theory for the heat operator would suggest that $\partial_x u$ scales at best like $t^{-1/2}$ as $t \to 0$. It then follows that $\Gamma(u) (\partial_x u)^2$ scales at best as $t^{-1}$ and therefore $u$ scales at best as $t^0$. Whilst this is sufficient to again imply that $\partial_x u$ scales as $t^{-1/2}$, there is no wiggle room. In particular, when attempting to implement a fixed-point argument, it is not clear how to extract a small constant with which to buckle the required estimates. For $C^\theta$ initial data, $(\partial_x u)^2$ instead scales as $t^{-1 + \theta}$ near time $0$ which leads to a scaling gain of $t^{\theta/2}$ in one iteration of the fixed-point map.

Therefore, the main contribution of this article that is not part of the strategy already implemented in \cite{ES26} for the generalised Parabolic Anderson Model is a proof that the explosion time of the supremum norm of the solution to gKPZ coincides with the explosion time of the stronger $C^\theta$-norm. This is achieved by providing a treatment of the equation in its initial layer with merely $L^\infty$ initial data. Our approach is based on the expansion of the solution around the deterministic profile corresponding to $g = h = \sigma = 0$ (for which a direct treatment of the $L^\infty$ initial data problem is relatively straightforward) and the introduction of an integrating factor to remove the lower order terms resulting from this expansion that remain scaling critical in the sense of their blow-up at time $0$. The main technical detail is then that one must show that these formal transformations of the equation are compatible with its renormalisation. Once these details are taken care of, we will have shown the following.

\begin{theorem}
	The spatially periodic solution of (the appropriately renormalised version of) \eqref{eq:gKPZ} associated to any symmetric\footnote{The precise definition of a symmetric model is given in Definition~\ref{def:symmetric_model} below. Roughly speaking, it corresponds to the requirement that the renormalisation is consistent with the symmetry of the equation under spatial reflections. In particular, this includes the BPHZ lift of fractional derivatives of white noise.} model exists globally-in-time. Furthermore, for any $T < \infty$, the solution map on $[0,T]$ is locally Lipschitz continuous as a function of the driving model and the initial data $\psi \in C^\theta$ for any $0 < \theta < \alpha + 2$ where $\alpha$ is the H\"older regularity of the driving noise. 
\end{theorem}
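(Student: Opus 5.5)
The plan is to combine three ingredients, in the spirit of \cite{ES26}: a short-time theory from constant initial data with time and deviation bounds independent of the constant, a comparison principle, and a continuation criterion in $L^\infty$ rather than $C^\theta$.

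\textbf{Step 1: shift invariance.} For $a\in\mathbb{R}$, set $v=u-a$. Then $v$ solves \eqref{eq:gKPZ} with the nonlinearities $\Gamma(\cdot+a),g(\cdot+a),h(\cdot+a),\sigma(\cdot+a)$. Their $C^N_b$ norms are uniform in $a$. In the regularity structures framework the renormalisation constants are polynomial in derivatives of these functions, evaluated consistently with the shift. Hence the local existence time for constant initial data $a$ depends only on $\|\Gamma\|_{C^N_b},\dots$ and on the model, and $\sup_{t\le\tau}\|u(t)-a\|_\infty\le K$ with $\tau,K$ independent of $a$.

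\textbf{Step 2: comparison principle.} I would establish it at fixed regularisation scale for the renormalised smooth equation. This is a classical parabolic comparison, since the renormalisation counterterms are local functions of $u,\partial_x u$ and the equation stays of the form $\partial_t u-\Delta u=F(u,\partial_x u)$. I would then pass to the limit along the BPHZ convergence. The symmetry assumption (Definition~\ref{def:symmetric_model}) ensures the counterterms have this admissible form, i.e.\ no terms odd in $\partial_x$ of unexpected type appear. Using upper and lower barriers started from $\pm\|u(s)\|_\infty$ and applying Step 1 gives $\|u(t)\|_\infty\le\|u(s)\|_\infty+K$ for $t\in[s,s+\tau]$. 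Iterating, $\|u(t)\|_\infty\le\|\psi\|_\infty+K(1+t/\tau)$, so the supremum norm cannot blow up in finite time.

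\textbf{Step 3: the $L^\infty$ continuation criterion.} This is the main obstacle. I must show that if $\|u(s)\|_\infty\le M$, then the solution restarted at $s$ exists on a time interval $\tau(M)>0$, with $C^\theta$ norm bounded at positive times, so that the $C^\theta$ blow-up time coincides with the $L^\infty$ blow-up time. Since the quadratic term is scaling critical at $t=0$ for $L^\infty$ data, I would proceed as follows.
\begin{enumerate}
\item Solve the deterministic profile equation $(\partial_t-\Delta)U=\Gamma(U)(\partial_xU)^2$ with $U(0)=u(s)$. This equation is handled via a Cole--Hopf type transform $\Phi(U)$ with $\Phi''=\Gamma\Phi'$, which linearises it to the heat equation. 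This gives $\|\partial_xU(t)\|_\infty\lesssim t^{-1/2}$ and the analogous higher-derivative bounds.
\item Write $u=U+w$. The equation for $w$ contains the critical linear terms $2\Gamma(U)\partial_xU\,\partial_xw+\Gamma'(U)(\partial_xU)^2w$, both of size $t^{-1/2}\partial_xw$ and $t^{-1}w$.
\item Remove these terms with an integrating factor: set $w=e^{\Lambda(U)}\tilde w$, or equivalently change variables through $\Phi$, i.e.\ work with $\Phi(u)-\Phi(U)$, chosen so that the drift and potential terms cancel. The remaining nonlinearity is then quadratic in $\tilde w$, or lower order with an extra $t^{\kappa}$ gain. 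It can therefore be closed by a fixed point in a weighted space of modelled distributions with blow-up at $0$, i.e.\ the singular modelled distributions $\mathcal{D}^{\gamma,\eta}$ with $\eta$ near $0$, on a short interval depending only on $M$.
\item Verify that these transformations commute with renormalisation. Transforming the equation yields a new gKPZ-type equation with $t$-dependent smooth coefficients built from $U$. Since renormalisation counterterms depend only on the local nonlinearity via the BPHZ formula and the chain rule, and the symmetric model kills the terms that would obstruct this, the renormalised equations correspond. This is the technically heaviest part.
\end{enumerate}

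Finally, I would combine Steps 2 and 3 to conclude global existence. Local Lipschitz continuity on $[0,T]$ then follows from the local theory, since the a priori $L^\infty$ bound gives a uniform number of restarts, each Lipschitz in the model and the data.
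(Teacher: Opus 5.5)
Your architecture is the paper's. You use a uniform-in-$a$ short-time theory for constant data via the shift $f\mapsto f(\cdot+a)$, comparison at fixed regularisation followed by a limit, and an $L^\infty$ blow-up alternative. The latter comes from expanding around the profile $U$, linearised by your $\Phi$ (the paper's $J$, with $J''=\Gamma J'$), and multiplying $u-U$ by the integrating factor $J'(U)=e^{\int_0^U\Gamma}$, which does remove the drift and the potential simultaneously.

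The gap is Step~3(iv), which is where the paper's actual work lies, and the justification you give there does not hold. Renormalisation counterterms are \emph{not} covariant under nonlinear changes of the unknown, so ``the BPHZ formula and the chain rule'' proves nothing. For the same reason your parenthetical ``equivalently, work with $\Phi(u)-\Phi(U)$'' is not equivalent. That is a Cole--Hopf-type transform of the rough solution itself, which is the route of \cite{BGN24} and needs a stronger symmetry assumption on the model. The integrating factor is only its linearisation at $U$: it acts affinely on the rough unknown with polynomial-sector coefficients, and that is what makes the counterterms comparable. Two things must then be supplied.

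First, \cite{BCCH} needs an autonomous nonlinearity. Your transformed equation has coefficients that are explicit functions of $(t,x)$, singular at $t=0$, and is of the non-standard form $A\mathcal P_\gamma(\cdots)$. The paper uses that $y=J(U)$ solves the heat equation and writes $A=a(\mathcal Y)$, $B=b(\mathcal Y)$, $C=Q(\mathcal Y)$ with $Q=J^{-1}$ and $b=J'\circ Q=1/a$. It then passes to the system for $(\mathcal Y,\mathcal W)$ with $\mathcal W=b(\mathcal Y)\mathcal V$, obtaining that $u$ solves gKPZ with counterterm $a(y)\mathfrak{C}_W$.

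Second, this must be identified with the gKPZ counterterm. The power counting $|\tau|_\fs=-2+n(\alpha+2)+(2m-e)+|k|_\fs$, together with $\ell(\tau)=0$ for $e+k_x$ odd, leaves only trees without $\mathcal Y$-edges or polynomial decorations in which every noise-free vertex has exactly two derivative edges. Write $F^0,F^\Xi$ for the drift and noise coefficients, with $F^0_U(u,u')=\Gamma(u)(u')^2+g(u)u'+h(u)$ and $F^\Xi_U=\sigma(u)$. On these trees, the identities $\partial_w^kF^\Xi_W=b^{1-k}\sigma^{(k)}(u)$ and $\partial_w^j\partial_{w'}^2F^0_W=2b^{-1-j}\Gamma^{(j)}(u)$ give $\Upsilon_u=b^{-1}\Upsilon_w$ by induction. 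The symmetry is essential here, not cosmetic. One computes $\partial_{w'}F^0_W=\partial_{u'}F^0_U-2\Gamma(u^\psi)\partial_xu^\psi$, so any surviving vertex with a single derivative edge would break the identity.

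A secondary point concerns Step~3(iii). The new unknown has zero initial data, so the relevant weight is $\eta=\alpha+2-\eps>0$ (it vanishes at $t=0$), and the scaling heuristic does not by itself close the fixed point. Hairer's product estimate \cite[Prop.~6.12]{H0}, applied to the outer factor $A\in\mathcal D^{\widetilde\gamma,0}$, returns weight at most $0<\eta$. Applied to $\mathcal V\,DU\,D\mathcal V$ and $\mathcal V^2(DU)^2$, where $DU$ has weight $-1$, it returns $\eta-2$, which gains no power of $T$, and $-2$, which is outside the Schauder range. The paper instead works in the spaces $\widehat{\mathcal D}^{\gamma,\eta}$ of \cite[App.~A]{CCHS22}, consisting of modelled distributions vanishing for $t\le 0$. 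There weights add under multiplication (e.g.\ $\mathbf 1_+\mathcal V\,DU\,D\mathcal V\in\widehat{\mathcal D}^{\bar\gamma,2\eta-2}$), and \cite[Thm.~A.9]{CCHS22} replaces the usual Schauder estimate. With these ingredients in place, the rest of your argument goes through as in the paper.
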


\begin{remark}
	The continuity of the solution map up to the explosion time of the $C^\theta$-norm of the solution is a special case of the general machinery of \cite{BCCH}. Therefore, the novel result is the non-explosion of this norm. We therefore assume that throughout this paper the parameter $\theta$ satisfies the assumption of this statement.
\end{remark}
Whilst to the best of our knowledge this work provides the first pathwise proof of global-in-time well-posedness for gKPZ (even as driven by white-noise rather than in the full subcritical regime), there is an expansive body of literature on long-time well-posedness for KPZ type equations. We mention by way of example that as is well-known, in the case of the standard KPZ equation, global well-posedness can be obtained by Cole-Hopf \cite{BertiniGiacomin}, by use of the invariant measure \cite{FN17} or by pathwise techniques; see e.g. the works \cite{KPZ-reloaded, ZZZ22} which contain such statements amongst other results. In the case of the white-noise driven generalised KPZ equation, we mention that the work \cite{BGN24} also obtains global well-posedness for the quasilinear variant of the equation considered here under a stronger symmetry assumption on the model than is needed here by using diffeomorphism invariance to reduce to an equation that can be treated via It\^o calculus. In comparison, an advantage of the approach implemented here is that the symmetry assumptions needed are very light and thus can be checked without heavy algebraic machinery. 

Related pathwise approaches to global well-posedness for singular SPDEs based on a priori estimates have been developed for the dynamic $\Phi^4$ model and its fractional variants \cite{MW17,MW20,CMW23,DGR23,EW24}, and in earlier work on the generalised parabolic Anderson model \cite{CFW26,SZZ26a, SZZ26b} (and variants sitting between the two \cite{EW26}). We emphasise that the methods in this paper are not expected to apply to equations with a $\Phi^4$-type polynomial nonlinearity, since such nonlinearities single out a natural origin, namely $0$, in solution space.

For expositional reasons, we choose to first present the argument in the case $\Gamma = \sigma = 1, g = h = 0$ with $\xi$ being $(1+1)$-dimensional space-time white noise which is nothing other than the KPZ equation. Whilst this is a very special case of the general result, presenting it separately allows us to isolate the core mechanism of the argument from the more involved computations that are required in the general setting. We therefore hope that by presenting the argument first in this simpler special case, we allow the reader to more easily follow the main ideas of the paper.

\subsubsection*{Acknowledgments}

The first author gratefully acknowledges the hospitality of the Max Planck Institute for Mathematics in the Sciences, at which they were a visitor when this work was carried out.

The second author gratefully acknowledges funding by the Deutsche Forschungsgemeinschaft (DFG, German Research Foundation) - CRC/TRR 388 ``Rough Analysis, Stochastic Dynamics and Related Fields'' - Project ID 516748464.
\subsubsection*{Statement on AI use}
During the preparation of this paper, the large language model ChatGPT 5.6 Sol was used as a computational assistant in the preparation of Lemma~\ref{lem:gKPZ_conj_fixed_point} and Lemma~\ref{lem:renormalised_eq}. In particular, ChatGPT 5.6 prepared the first two tables appearing in the proof of Lemma~\ref{lem:gKPZ_conj_fixed_point} using the parameters in the statement of the Lemma which were set by the authors. Beyond these two tables, the remainder of this proof is the work of the authors alone. Similarly, this model was used for a small number of mechanical computations in an early version of the proof of Lemma~\ref{lem:renormalised_eq}. ChatGPT 5.6 Sol and 6 Astra were used for literature review and typographical checks of the final version of the article.

The overall strategy, the proofs and the presentation of the paper are not AI-generated.

\section{Global-in-Time Existence for KPZ}

In this section, we illustrate our strategy of proof in the simple case of the spatially periodic KPZ equation. We note that in this case, both of the required ingredients for the strategy were in fact already available in the literature since it was established in \cite[Corollary 4.10]{KPZ-reloaded} that the relevant explosion time for the KPZ equation is that of the supremum norm by use of the Cole-Hopf transform at the level of paracontrolled solutions. Since this argument seems challenging to generalise to the setting of the generalised KPZ equation, we instead provide a different argument that does not rely on a Cole-Hopf transformation at the level of the rough equation. 

We begin with some formal calculations, performed at the level of a deterministic equation with smooth forcing, which serve as motivation for the strategy for establishing a blow-up alternative in the supremum norm. We recall that as was discussed in the introduction, the barrier is that for such initial data $(\partial_x h)^2$ will scale like $t^{-1}$ near to time $0$. Note that in this section only, we use the traditional notation of $h$ to denote the solution of KPZ. This should not be confused with the non-linearity $h$ in \eqref{eq:gKPZ} since the latter is not present in this section, which can be read independently of the rest of the paper. This scaling behaviour at time $0$ is critical and thus no small-constant can be extracted to buckle a priori estimates. To circumvent this problem, one could proceed as follows. First, given initial data $\psi$, one could expand the solution to
\begin{align*}
	(\partial_t - \Delta) h = (\partial_x h)^2 + f
\end{align*}
around the solution of 
\begin{align*}
	(\partial_t - \Delta) h^\psi = (\partial_x h^\psi)^2
\end{align*}
with the same initial data. $u = h - h^\psi$ would solve
\begin{align*}
	(\partial_t - \Delta) u = (\partial_x u)^2 + 2 \partial_x u \partial_x h^\psi + f
\end{align*}
with vanishing initial data. Unfortunately, since $\partial_x h^\psi$ has the same scaling as $\partial_x h$ at time $0$ where $h$ is the original solution, due to the second term on the right-hand side it is still not possible to close an a priori estimate for $u$ by considering this equation directly.

Instead, one could consider $e^{h^\psi} u$ which solves
\begin{align*}
	(\partial_t - \Delta)(e^{h^\psi} u) = e^{h^\psi} [(\partial_x u)^2 + f + 2 \partial_x h^\psi \partial_x u] - 2 \partial_x e^{h^\psi} \partial_x u = e^{h^\psi} [ (\partial_x u)^2 + f]
\end{align*}
where we have used the fact that $e^{h^\psi}$ is the Cole-Hopf transform of $h^\psi$ and thus solves the homogeneous heat equation. With this formulation, if $u$ scales like $t^{\delta/2}$ near to $t = 0$ then $(\partial_x u)^2$ scales like $t^{\delta - 1}$. Since $e^{h^\psi}, e^{-h^\psi}$ have bounded supremum norm (by the maximum principle), one can then expect to obtain from this rewriting that $u$ scales like $t^\delta$, which is an improvement from which one could harvest a small constant. With this heuristic in mind, we pass to the rough setting and aim to give a rigorous treatment, starting by introducing an analogue at the level of modelled distributions of the problem considered above.

Our first goal is to provide a short-time solution theory for the fixed point problem
\begin{align}\label{eq:conj_KPZ}
	W = A\,\mathcal P_\gamma\Bigl(\mathbf 1_+\,B\bigl[(DW)^2+\Xi\bigr]\Bigr),
\end{align}
where $|\Xi|_\fs = - 3/2 - \eps$ and $A, B \in \mcD^{\widetilde \gamma, 0}$ are valued in the polynomial sector for some $\widetilde \gamma > 0$. The main barrier is that applying \cite[Proposition 6.12]{H0} to the product containing $A$ yields a weight at best $0$ near the time zero hyperplane, which is insufficient to close the fixed point problem. However, this is essentially the same situation as was tackled in \cite[Appendix A]{CCHS22}. As in that paper, we write $\bar{\mathcal{D}}^{\gamma, \eta} = \mcD^{\gamma, \eta} \cap \mcD^\eta$ and $\widehat{\mcD}^{\gamma, \eta}$ for those elements $f$ of $\bar \mcD^{\gamma, \eta}$ such that $f(t,x) = 0$ for all $t \le 0$. 

We fix parameters $0 < \varepsilon < \kappa \ll 1$ and set $\alpha = |\Xi|_\fs = -\frac32 - \eps$. We also define
\begin{align*}
		\gamma=\frac32+\kappa, \qquad \eta=\frac12-\kappa, \qquad \overline\gamma=\gamma+\alpha	=\kappa-\varepsilon>0,
	\end{align*}
\begin{lemma}\label{lem:conj_fixed_point}
	Suppose that $\widetilde\gamma>\gamma$ and that $A,B\in \mcD^{\widetilde\gamma,0}$ take values in the polynomial sector.

	Then, for every $R > 0$, there exists $T_R > 0$ such that, whenever
	\begin{align*}
		\|A\|_{\widetilde\gamma,0;T_R} + \|B\|_{\widetilde\gamma,0;T_R} \leq R,
	\end{align*}
	the fixed-point problem
	\begin{align*}
		W=A\mathcal P_\gamma\mathbf 1_+B\bigl[(DW)^2+\Xi\bigr]
	\end{align*}
	on $(0, T_R] \times \mathbb{T}$ has a unique solution $W\in\widehat \mcD^{\gamma,\eta}$. Furthermore, the solution map $(A,B)\mapsto W$ is locally Lipschitz as a map $\mcD^{\widetilde\gamma,0} \times \mcD^{\widetilde\gamma,0} \to \widehat \mcD^{\gamma,\eta}$ in the sense that given a second solution $\bar{W}$ associated to $(\bar{A}, \bar{B})$
	\begin{align*}
		\|W-\overline W\|_{\gamma,\eta;T_R}	\lesssim_R	\|A-\overline A\|_{\widetilde\gamma,0;T_R} + \|B-\overline B\|_{\widetilde\gamma,0;T_R}.
	\end{align*}
	Moreover, every solution of the fixed-point equation in $\mcD^{\gamma,\eta}$ automatically belongs to $\widehat \mcD^{\gamma,\eta}$, so uniqueness also holds among solutions in $\mcD^{\gamma,\eta}$.
\end{lemma}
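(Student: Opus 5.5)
We will set this up as a Banach fixed point argument for the map $\mathcal M(W) = A\mathcal P_\gamma \mathbf 1_+ B[(DW)^2+\Xi]$ on a ball of $\widehat\mcD^{\gamma,\eta}$, following \cite[Appendix A]{CCHS22}. The key point is that the spaces $\widehat\mcD^{\gamma,\eta}$ vanish for $t\le 0$ and carry control of the lower-order norm $\mcD^\eta$. Multiplication by an element of weight $0$ then costs no weight, so the weight $\eta$ is preserved, in contrast to the loss predicted by \cite[Proposition 6.12]{H0}.

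\textbf{Step 1: power counting.} For $W\in\widehat\mcD^{\gamma,\eta}$ we have $DW\in\widehat\mcD^{\gamma-1,\eta-1}$ with regularity at least $-\frac12-\eps$. Hence $(DW)^2\in\mcD^{\gamma-\frac32-\eps,\,2\eta-2}$, with $2\eta-2=-1-2\kappa$. Multiplying by $B$, which is polynomial-valued of weight $0$, and using the refined multiplication estimate for $\widehat\mcD$ from \cite[Appendix A]{CCHS22}, we keep weight $-1-2\kappa$ and regularity $\bar\gamma=\kappa-\eps>0$. For the noise term, $\mathbf 1_+ B\Xi$ lies in $\mcD^{\widetilde\gamma+\alpha,\alpha}$ with $\alpha=-\frac32-\eps>-2$, so the reconstruction of $\mathbf 1_+$ times it is canonical. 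Both weights exceed $-2$, so \cite[Proposition 6.16]{H0} applies to $\mathcal P_\gamma$. It gives output in $\mcD^{\gamma,\eta'}$ with $\eta'=\min(-1-2\kappa,\alpha)+2=\frac12-\eps>\eta$ from the noise term, and $1-2\kappa>\eta$ from the quadratic term. The excess weight over $\eta$ is what yields a factor $T^{\theta}$ for some $\theta>0$ via \cite[Theorem 7.1]{H0}, using the short-time version for $\widehat\mcD$ with vanishing past. Finally, multiplication by $A$ is handled again by the $\widehat\mcD$ multiplication lemma. It requires that $\mathcal P_\gamma(\dots)$ also lies in $\mcD^{\eta}$ (the $\bar\mcD$ part), which holds because $\mathcal P$ maps into $\bar\mcD$ for vanishing-past input. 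It also requires $\widetilde\gamma>\gamma$, so the product keeps regularity $\gamma$.

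\textbf{Step 2: contraction.} We obtain $\|\mathcal M(W)\|_{\gamma,\eta;T}\lesssim R^2 T^\theta(1+\|W\|^2_{\gamma,\eta;T})$. For the difference we use $(DW)^2-(D\bar W)^2=D(W-\bar W)(DW+D\bar W)$, which gives a bound $R^2T^\theta(1+\|W\|+\|\bar W\|)\|W-\bar W\|$. Choosing the ball radius $1$ and $T_R$ small depending only on $R$ makes $\mathcal M$ a contraction, which gives existence and uniqueness in the ball. The Lipschitz dependence on $(A,B)$ follows from the same multilinear bounds applied to $A-\bar A$ and $B-\bar B$ together with the contraction.

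\textbf{Step 3: uniqueness in $\mcD^{\gamma,\eta}$.} Any fixed point automatically lies in $\widehat\mcD^{\gamma,\eta}$. It vanishes for $t\le0$ because of $\mathbf 1_+$ and the form $A\mathcal P(\cdot)$. It lies in $\mcD^\eta$ because it is an image of $\mathcal M$, which lands in $\bar\mcD$. Uniqueness outside the ball is then obtained by the standard continuation argument, iterating on shorter intervals since the a priori bound depends on the norm of the solution. I expect the main obstacle to be Step 1: checking that the $\widehat\mcD$ multiplication lemma of \cite{CCHS22} gives weight exactly $\min$ rather than sum when one factor has weight $0$ and is only in $\mcD^{\widetilde\gamma,0}$, not $\bar\mcD$. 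The first thing I would try is to use that $A$ and $B$ are polynomial-valued, so their low-order components are bounded functions.
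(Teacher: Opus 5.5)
Your proposal is correct and follows essentially the same route as the paper: power counting for $(DW)^2$ via \cite[Proposition~6.12]{H0}, the short-time Schauder gain from \cite[Theorem~7.1]{H0}, membership of the integrated terms in $\widehat\mcD^{\gamma,\eta}$ via \cite[Theorem~A.9]{CCHS22}, and multiplication by $A$ via \cite[Lemma~A.8]{CCHS22}, where the weights add to give $0+\eta=\eta$; the argument then closes with a contraction on the unit ball and with uniqueness in $\mcD^{\gamma,\eta}$, since membership in $\widehat\mcD$ is deduced from the fixed point rather than assumed. The obstacle you flag is resolved as you suggest: $A$ takes values in the polynomial sector and its weight $0$ equals its sector regularity, so the $\mcD^0$ condition is vacuous and $\mathbf 1_+A\in\widehat\mcD^{\widetilde\gamma,0}_0$ automatically, which is exactly the observation the paper makes.
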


\begin{proof}
	We denote the norms on the compact set $(0,T] \times \mathbb{T}$ by $\|\cdot\|_{\rho,\theta;T}$. We set
	$\overline\gamma:=\gamma+\alpha=\kappa-\varepsilon>0.$
	
	We have that $DW \in \mcD_{\alpha + 1}^{\gamma-1,\eta-1}= \mcD_{-1/2 - \eps}^{\frac12+\kappa,-\frac12-\kappa}$
	where the lower index denotes the regularity of the sector in which the modelled distributions take values. 
	Therefore, by \cite[Proposition~6.12]{H0}
	$(DW)^2 \in \mcD_{2\alpha + 2}^{\overline\gamma,2\eta-2}=\mcD_{-1 - 2\eps}^{\kappa-\varepsilon,-1-2\kappa}$
	where we have used the fact that $\gamma - 1 + \alpha + 1 = \kappa - \eps = \bar \gamma$ and that $2(\eta - 1) = -1 - 2 \kappa < -1 - \kappa - \eps = (\eta - 1) + (\alpha + 1)$ since $\eps < \kappa$. Since $\widetilde\gamma>\gamma$, multiplication by $\mathbf{1}_+B$ preserves the exponents mentioned above, yielding
	$\mathbf{1}_+B(DW)^2 \in \mcD_{2 \alpha + 2}^{\overline\gamma,2\eta-2}.$
	Using the estimates in \cite[Proposition~6.12]{H0} we have therefore obtained
	\begin{align}
		\|\mathbf{1}_+B(DW)^2\|_{\overline\gamma,2\eta-2;T} &\lesssim \|B\|_{\widetilde\gamma,0;T} \|W\|_{\gamma,\eta;T}^2,
		\label{eq:quadratic-bound}
		\\
		\|\mathbf{1}_+B(DW)^2-\mathbf{1}_+B(D\overline W)^2\|_{\overline\gamma,2\eta-2;T} &\lesssim \|B\|_{\widetilde\gamma,0;T} \|W-\overline W\|_{\gamma,\eta;T}(\|W\|_{\gamma,\eta;T} + \|\overline W\|_{\gamma,\eta;T}).
		\label{eq:quadratic-difference-bound}
	\end{align} 
	Since $2\alpha + 2 = - 1 - 2\eps > -1 - 2\kappa = 2\eta - 2$, we have that $\mathbf{1}_+B(DW)^2 \in \widehat{\mcD}_{2\alpha + 2}^{\overline \gamma, 2 \eta - 2}$. 

	The constant modelled distribution $z\mapsto\Xi$ can be viewed as an element of $D_P^{\overline\gamma,\overline\gamma}$ taking values in a sector of regularity $\alpha$. Therefore $\|\mathbf{1}_+B\Xi\|_{\overline\gamma,\alpha;T}	\lesssim \|B\|_{\widetilde\gamma,0;T}$
	and $\mathbf{1}_+ B \Xi \in \widehat{\mcD}_{\alpha}^{\bar \gamma, \alpha}$. We split the integrated expression into
	\begin{align*}
		U_{\mathrm{nl}}(B, W) &:= \mathcal P_\gamma\bigl(\mathbf 1_+B(DW)^2\bigr),
		\\
		U_\Xi(B) &:= \mathcal P_\gamma\bigl(\mathbf 1_+B\Xi\bigr).
	\end{align*}
	By \cite[Theorem~7.1, Lemma~7.3]{H0},
	we therefore obtain
	\begin{align}\label{eq:nonlinear-schauder}
		&\|U_{\mathrm{nl}}(B, W)\|_{\gamma,\eta;T}	\lesssim T^{\eta/2} \|\mathbf{1}_+ B(DW)^2\|_{\overline\gamma,2\eta-2;T} \lesssim T^{\eta/2}\| B\|_{\widetilde\gamma,0;T} \|W\|_{\gamma,\eta;T}^2.
	\end{align}
	The analogous estimate holds for differences. In the case of $U_\Xi(B)$, we have that $\mathbf{1}_+B\Xi \in \widehat{\mcD}_\alpha^{\overline\gamma,\alpha}$. Proceeding similarly, we obtain
	\begin{align}\label{eq:noise-schauder}
		\|U_\Xi(B)\|_{\gamma,\eta;T} \lesssim T^{\overline\gamma/2} \|B\Xi\|_{\overline\gamma,\alpha;T}
	\end{align}
	since $(\alpha + 2)-\eta=\kappa-\varepsilon=\overline\gamma>0$.

	We furthermore have that $U_{\mathrm{nl}}(B,W),U_\Xi(B) \in \widehat{\mcD}_{0}^{\gamma, \eta}$. Indeed, writing $G = K + R$ for the usual decomposition of the heat kernel, this follows from
	 \cite[Theorem~A.9]{CCHS22} in the case of the contribution corresponding to $K$ and the fact that $R$ is smooth and non-anticipative for the contributions corresponding to $R$.
	 We now define
	\begin{align*}
		\Phi_{A,B}(W)
		:=
		A\bigl(U_{\mathrm{nl}}(B,W)+U_\Xi(B)\bigr).
	\end{align*}
	We note that since $U_{\mathrm{nl}}(B,W), U_\Xi(B)$ vanish on negative times, $\Phi_{A,B}(W) = \mathbf{1}_+ \Phi_{A,B}(W)$. We also note that $A \in \mcD_0^{\widetilde \gamma, 0}$ implies that $\mathbf{1}_+ A \in \widehat{\mcD}_0^{\widetilde \gamma, 0}$. Combining the above facts, \cite[Lemma~A.8]{CCHS22} yields
	\begin{align}
		\|\Phi_{A,B}(W)\|_{\gamma,\eta;T}
		\leq
		C_R\left(
			T^{\overline\gamma/2}
			+T^{\eta/2}\|W\|_{\gamma,\eta;T}^2
		\right)
		\label{eq:map-bound}
	\end{align}
	uniformly over $A, B$ and the model having size at most $R$ in their respective norms.
	Similarly, 
	\begin{align}
		\|\Phi_{A,B}(W)-\Phi_{A,B}(\overline W)\|_{\gamma,\eta;T}
		\leq
		C_{R} T^{\eta/2}
		\|W-\overline W\|_{\gamma,\eta;T}  (\|W\|_{\gamma,\eta;T} + \|\overline W\|_{\gamma,\eta;T})
		\label{eq:contraction-bound}
	\end{align}
	Therefore, choosing $T>0$ sufficiently small, it follows from \eqref{eq:map-bound} and \eqref{eq:contraction-bound} that $\Phi_{A,B}$ maps the ball of radius $1$ in $W$-space into itself and is a strict contraction. This gives the required
	fixed point. Uniqueness without restricting to a ball of radius $1$ follows similarly to the continuation argument in \cite[Theorem~7.8]{H0}.

	Finally, the difference versions of the preceding estimates yield, for $(A,B)$ and $(\overline A,\overline B)$ in a fixed bounded set and the corresponding arguments $W,\overline W$ in a fixed ball,
	\begin{align*}
		&\|\Phi_{A,B}(W)-
		\Phi_{\overline A,\overline B}(\overline W)\|_{\gamma,\eta;T}
		\\
		&\qquad\leq
		q\|W-\overline W\|_{\gamma,\eta;T}
		+C_{R}\left(
			\|A-\overline A\|_{\widetilde\gamma,0;T}
			+\|B-\overline B\|_{\widetilde\gamma,0;T}
		\right),
	\end{align*}
	where $q<1$ after decreasing $T$ uniformly on that bounded set. Evaluating this inequality at the two fixed points and absorbing the first term gives
	\begin{align*}
		\|W-\overline W\|_{\gamma,\eta;T}
		\lesssim_{R}
		\|A-\overline A\|_{\widetilde\gamma,0;T}
		+\|B-\overline B\|_{\widetilde\gamma,0;T}.
	\end{align*}
	This is the asserted local Lipschitz dependence.

	The final claim about the uniqueness of $W$ in the larger class $\mcD^{\gamma, \eta}$ follows from the fact that in the proof above, we did not use the assumption that $W \in \widehat{\mcD}^{\gamma, \eta}$ as an input but rather deduced it from the fixed point.
\end{proof}
The following result is an easy corollary of the deterministic Cole-Hopf transform and regularity theory for the heat equation.
Given that this proof is a simplified version of the one provided in Lemma~\ref{lem:gKPZ_PDE_control}, we omit it here for brevity.
\begin{lemma}\label{lem:psi_lift}
	Given $\psi \in L^\infty$, define $h^\psi$ to be the solution of
	\begin{align*}
		(\partial_t - \Delta) h^\psi = (\partial_x h^\psi)^2 
	\end{align*}
	with initial data $\psi$. Then $\psi \mapsto h^\psi$ is locally Lipschitz with values in $C^{\gamma, 0}$. In particular,
	\begin{align*}
		\mcE(h^\psi) &= \sum_{|k|_\fs < \widetilde{\gamma}} \frac{\partial^k e^{h^\psi}}{k!} X^k
		\\
		\mcE(-h^\psi) &= \sum_{|k|_\fs < \widetilde{\gamma}} \frac{\partial^k e^{-h^\psi}}{k!} X^k
	\end{align*}
	are well-defined elements of $D^{\widetilde{\gamma}, 0}$ that depend in a locally Lipschitz fashion on $\psi \in L^\infty$.
\end{lemma}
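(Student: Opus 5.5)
We will use the deterministic Cole-Hopf transform. Set $v^\psi := e^{h^\psi}$, so that $v^\psi = P_t e^{\psi}$ is the solution of the homogeneous heat equation on $\mathbb{T}$ with initial data $e^{\psi}$, and $h^\psi = \log v^\psi$. By the maximum principle,
\begin{align*}
	e^{-\|\psi\|_\infty} \le v^\psi(t,x) \le e^{\|\psi\|_\infty}
\end{align*}
for all $t \geq 0$. Hence $h^\psi$ is bounded by $\|\psi\|_\infty$, and $\log$ is smooth with bounded derivatives on the range of $v^\psi$. We will control $h^\psi$ in the weighted parabolic H\"older spaces $C^{\gamma,0}$ (and more generally $C^{\widetilde\gamma,0}$), whose norm bounds $\sup_t t^{(|k|_\fs - 0)/2}\|\partial^k h^\psi(t)\|_\infty$ for $|k|_\fs<\gamma$, together with the corresponding weighted H\"older quotients. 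The plan has three steps, carried out in this order.

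\textbf{Step 1 (heat semigroup).} Standard heat-kernel estimates give $\|\partial^k P_t f\|_\infty \lesssim t^{-|k|_\fs/2}\|f\|_\infty$ for every multi-index $k$ (with time derivatives counted with weight $2$). The analogous bounds hold for the H\"older increments at scale $\lambda \le \sqrt t$. This shows that $f \mapsto P_\cdot f$ is a bounded linear map $L^\infty \to C^{\rho,0}$ for every $\rho>0$. In particular $\psi\mapsto v^\psi$ is locally Lipschitz into $C^{\widetilde\gamma,0}$, because $\psi \mapsto e^\psi$ is locally Lipschitz on $L^\infty$.

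\textbf{Step 2 (composition).} We show that composition with a smooth function $F$ defined on a neighbourhood of the compact interval $[e^{-R},e^{R}]$ (here $F=\log$ or $F(x)=x^{-1}$) maps bounded sets of $C^{\rho,0}$ with range in this interval to bounded sets of $C^{\rho,0}$, in a locally Lipschitz way. The tool is Fa\`a di Bruno's formula: in each product $\prod_j \partial^{k_j} v$ with $\sum_j k_j = k$, the weights $t^{-|k_j|_\fs/2}$ multiply to exactly $t^{-|k|_\fs/2}$. This is where the weight $0$ is essential, since it is stable under products; a positive weight would not be. The H\"older quotients are handled in the same way via the mean value theorem. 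The Lipschitz bound follows by writing $F(v)-F(\bar v) = (v-\bar v)\int_0^1 F'(\bar v + s(v-\bar v))\,ds$ and applying the product estimate. This gives the claim for $h^\psi$, and also for $e^{-h^\psi} = 1/v^\psi$.

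\textbf{Step 3 (lift to modelled distributions).} A function $f\in C^{\widetilde\gamma,0}$ has polynomial lift $\sum_{|k|_\fs<\widetilde\gamma}\frac{\partial^k f}{k!}X^k$, which lies in $\mcD^{\widetilde\gamma,0}$ with norm bounded by $\|f\|_{C^{\widetilde\gamma,0}}$. The required bound $|\Gamma f(z) - f(z')|_\ell \lesssim |z-z'|^{\widetilde\gamma-\ell}$, with weight, is Taylor's theorem with remainder. On regions where $|z-z'|\le \tfrac12\sqrt{t\wedge t'}$ the derivative bounds are uniform, so the remainder estimate produces exactly the weighted factor. The lift is linear, so Lipschitz dependence on $\psi$ is inherited directly. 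Applying this to $f=e^{h^\psi}$ and $f=e^{-h^\psi}$ gives the two expressions $\mcE(h^\psi)$ and $\mcE(-h^\psi)$ in the statement.

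The main obstacle is the bookkeeping of the weighted H\"older quotients in Step 2. Near $t=0$ one must check that no loss occurs, which uses that the scale restriction in the definition of $\mcD^{\gamma,\eta}$ lets one compare points only at distances below $\sqrt t$. The non-integer parts of the regularity are controlled by interpolating between consecutive derivative bounds.
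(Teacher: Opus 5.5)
Your proposal is correct and is essentially the paper's argument. The paper omits this proof as the $\Gamma\equiv 1$ case of Lemma~\ref{lem:gKPZ_PDE_control}, where $J(r)=e^r-1$ is exactly the Cole--Hopf transform: $J(h^\psi)$ is controlled in $C^{\gamma,0}$ by heat-kernel estimates, $h^\psi$ is recovered by composing with the smooth inverse $Q=J^{-1}$ on the bounded range given by the maximum principle, and Lipschitz dependence follows from linearity of the heat flow in $J(\psi)$. The only difference is that you prove the weight-$0$ composition and polynomial-lift estimates by hand (Fa\`a di Bruno plus Taylor's formula), whereas the paper invokes \cite[Proposition~6.13]{H0} together with the identification of $C^{\gamma,0}$ with polynomial-sector elements of $\mcD^{\gamma,0}$.
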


\begin{definition}\label{def:symmetric_model}
	We say that a renormalised model associated to a smooth noise assignment, as constructed in \cite{BHZ} is symmetric if it corresponds to an element $\ell$ of the renormalisation group with the property that $e + k_x \in 2 \mathbb{N} + 1$ implies that $\ell(\tau) = 0$ where $e$ denotes the total number of derivatives on kernel edges of $\tau$ and $k_x$ denotes the total order of the polynomial decorations in the spatial components.

	We write $\mathcal{M}^s$ for the closure of the space of symmetric admissible models associated to smooth noise assignments in the space of models.
\end{definition}
\begin{remark}
	It follows from symmetry of space-time white noise under spatial reflections that if $\varrho$ is a symmetric mollifier then the BPHZ lift of $\xi \ast \varrho^\eps$ is a symmetric model. Therefore the BPHZ lift of space-time white noise is in $\mathcal{M}^s$.
\end{remark}

\begin{lemma}\label{lem:identification}
	Suppose that $W$ is the solution of the fixed-point problem of Lemma~\ref{lem:conj_fixed_point} with respect to a model $Z = (\Pi, \Gamma) \in \mathcal{M}^s$ with $A = \mcE(-h^\psi)$ and $B = \mcE(h^\psi)$. If $Z$ corresponds to a smooth noise assignment and $w = \mathcal{R}W$, then we have that
	\begin{align}
	(\partial_t - \Delta) w = (\partial_x w)^2 + \xi_\eps + C_\eps + 2 \partial_x h^\psi \partial_x w
	\end{align}
	with $0$ initial data where $C_\eps$ is the counterterm for the KPZ equation driven by the model $Z$.
	
	In particular, it follows that for all $Z \in \mathcal{M}^s$, $h = w + h^\psi$ is the solution of KPZ with respect to $Z$. 
\end{lemma}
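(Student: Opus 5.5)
We compute the reconstruction of the fixed point directly for a smooth symmetric model and then pass to the limit. Let $Z$ come from a smooth noise assignment, renormalised by an element $\ell$ of the renormalisation group as in Definition~\ref{def:symmetric_model}.

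\textbf{Step 1: reconstruct the fixed point.} Since $A=\mcE(-h^\psi)$ and $B=\mcE(h^\psi)$ are polynomial lifts of smooth functions, $\mathcal R$ is multiplicative against them. Writing $W = A\,V$ with $V = \mathcal P_\gamma(\mathbf 1_+ B[(DW)^2+\Xi])$, we have
\begin{align*}
w = e^{-h^\psi} v, \qquad (\partial_t-\Delta)v = \mathbf 1_+\,\mathcal R\bigl(B[(DW)^2+\Xi]\bigr), \qquad v(0)=0 .
\end{align*}

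\textbf{Step 2: compute the renormalised products.} By the renormalisation formula for models of \cite{BHZ}, for smooth noise $\mathcal R(F) = \Pi^{\mathrm{can}}(M^\ell F)$ pointwise. We therefore expand $W$ to order $\gamma$. It has the form $W = w\mathbf 1 + \mathcal I(\Xi)\cdot(\text{coefficient }e^{-h^\psi}e^{h^\psi}=1) + \dots$ plus terms of higher homogeneity. The point is to identify which trees of negative degree appear in $B(DW)^2$ and with which coefficients.

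The trees of negative homogeneity in $(DW)^2$ are the usual KPZ trees: $(\mathcal I'\Xi)^2$, $\mathcal I'\Xi\,\mathcal I'(\mathcal I'\Xi)^2$, and so on. Additional trees carrying polynomial decorations come from the Taylor coefficients $\partial^k h^\psi$ of $A$ and $B$.

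\begin{itemize}
\item Trees with an odd number of spatial derivatives and spatial polynomial decorations have vanishing counterterms by symmetry.
\item The remaining counterterms are those of pure KPZ. Each enters multiplied by the product of the coefficients of $W$ along the tree, and these coefficients are built from $A\cdot B$ and derivatives.
\end{itemize}

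The key cancellation is that the leading coefficient of $\mathcal I'\Xi$ in $DW$ is $A_{\mathbf 1}B_{\mathbf 1}=1$. Hence $(DW)^2$ renormalises to $(\partial_x w)^2 + 2\partial_x h^\psi\partial_x w\cdot(\ldots)$ minus $C_\eps$, with no $h^\psi$-dependent constants. Terms such as $\partial_x A\,\mathcal I\Xi$ have odd $k_x$ and receive zero counterterm; this is exactly where symmetry is used.

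\textbf{Step 3: obtain the PDE for $w$.} This gives $(\partial_t-\Delta)v = e^{h^\psi}[(\partial_x w)^2+\xi_\eps+C_\eps]$ for $t>0$. Using $(\partial_t-\Delta)e^{-h^\psi} = -e^{-h^\psi}(\partial_xh^\psi)^2 + \ldots$, which follows from the equation for $h^\psi$, the product rule reverses the heuristic computation preceding \eqref{eq:conj_KPZ}. We obtain the stated equation for $w$, with zero initial data because $W\in\widehat\mcD^{\gamma,\eta}$ and $\eta>0$.

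\textbf{Step 4: identify the KPZ solution.} Then $h=w+h^\psi$ satisfies $(\partial_t-\Delta)h=(\partial_xh)^2+\xi_\eps+C_\eps$ with $h(0)=\psi$, since $(\partial_x w+\partial_x h^\psi)^2$ expands to exactly the cross term. So $h$ is the classical renormalised KPZ solution, and by uniqueness it coincides with the regularity structures solution. For general $Z\in\mathcal M^s$ we pass to the limit. Lemma~\ref{lem:conj_fixed_point} makes $W$ continuous in the model, with continuity in $Z$ implicit in the same estimates. The KPZ solution is also continuous in the model up to its explosion time, by \cite{BCCH}. Both maps agree on the dense set of smooth symmetric models, so they agree on $\mathcal M^s$ for short times, and thereafter by restarting from $C^\theta$ data.

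\textbf{Main obstacle.} The hardest step is Step 2: verifying that every renormalisation constant multiplying a tree with polynomial or derivative decorations coming from $A,B$ either vanishes by the parity condition or reproduces exactly $C_\eps$. I would organise this using the coherence (\emph{renormalised equation}) result of \cite{BCCH}. I would treat $W$ as solving a system with $A,B$ smooth inputs, whose renormalised nonlinearity is $\sum_\tau \ell(\tau)\Upsilon[\tau]/S(\tau)$. I would then check that every $\Upsilon[\tau]$ with even $e+k_x$ and non-KPZ shape vanishes, using that $A B\equiv 1$ at the polynomial level.
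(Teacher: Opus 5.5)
Your architecture is the paper's: for a smooth symmetric model, show $\mathcal R\bigl(B[(DW)^2+\Xi]\bigr)=e^{h^\psi}[(\partial_xw)^2+\xi_\eps+C_\eps]$, undo the integrating factor, then approximate. Steps~1, 3 and 4 are essentially fine; note that $(\partial_t-\Delta)e^{-h^\psi}=-2e^{-h^\psi}(\partial_xh^\psi)^2$ exactly.

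\textbf{The gap is Step~2.} It is the entire content of the lemma, and you assert it rather than prove it. Your one intermediate formula is wrong: $\mathcal R\bigl((DW)^2\bigr)$ contains no cross term $2\partial_xh^\psi\partial_xw$. That term only appears after conjugating back in Step~3, and if it were present in Step~2, the identity you use in Step~3 would fail. Likewise, multiplicativity of $\mathcal R$ against $B$ is not automatic for renormalised models when $B$ multiplies trees of negative degree. For $(DW)^2$, this multiplicativity is precisely what has to be shown.

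Concretely, the $X$-components of $A$ and $B$ give
\[
W=\mathcal I\Xi+\partial_xh^\psi\,\mathcal I(X\Xi)-\partial_xh^\psi\,X\mathcal I\Xi+\dots
\]
Hence $B(DW)^2$ is $e^{h^\psi}$ times the usual KPZ jet of $(DH)^2$ (with $w_X$ as Gubinelli derivative), plus the extra terms
\[
2e^{h^\psi}\partial_xh^\psi\,\mathcal I'(X\Xi)\,\mathcal I'\Xi,\qquad -e^{h^\psi}\partial_xh^\psi\,X(\mathcal I'\Xi)^2,\qquad -2e^{h^\psi}\partial_xh^\psi\,\mathcal I\Xi\,\mathcal I'\Xi,
\]
all of degree $2\alpha+3<0$. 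Your parity bullet only gives $\ell=0$ on these trees themselves. But $M^\ell$ also extracts negative subtrees. The first two trees contain $(\mathcal I'\Xi)^2$, on which $\ell$ is the main KPZ constant, and parity does not kill it. These terms drop out only because what remains after that extraction is a multiple of $X$, and $(\Pi_xX)(x)=0$. This is exactly how the paper's proof disposes of them. Your Step~2 lacks this observation and the explicit identification of the extra trees. The remark $A_{\mathbf 1}B_{\mathbf 1}=1$ does not by itself rule out $h^\psi$-dependent counterterms.

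\textbf{The \cite{BCCH} fallback.} The route in your last paragraph can be made to work (it is how the paper treats gKPZ), but not as stated. Because of the outer factor $A$, the equation $W=A\mathcal P_\gamma(\cdots)$ is not of the form $\mathcal P_\gamma\bigl(F(\cdot)\bigr)$. You must pass to $V=BW$, so that $DW=A\,DV+(DA)V$, and feed $A,B$ in through an auxiliary noise-free component. This is what Section~\ref{sec:gKPZ_identification} does with $\mcY$ and $\mcW=b(\mcY)\mcV$. The tree-sum formula then accounts for subcontractions automatically.

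It then remains to check two things. After the parity reduction of Lemma~\ref{lem:symm_reduction}, trees with auxiliary edges or with undifferentiated $V$-edges must have $\Upsilon_V=0$. This holds because the auxiliary nonlinearity vanishes, and for $F_V=B\bigl(A v'+(\partial_xA)v\bigr)^2$ one has $\partial_{v'}^2F_V=2BA^2$, which is independent of $v$. On the remaining trees, one must show $\Upsilon_V[\tau]=B\,\Upsilon_{\mathrm{KPZ}}[\tau]$, by induction using $AB=1$. Either this or the explicit computation above has to be carried out to close the proof.
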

\begin{proof}
	We first consider the case of smooth models. We claim that
	\begin{align*}
		\mathcal{R}[\mcE(h^\psi) [(DW)^2 + \Xi]] = e^{h^\psi} [ (\partial_x w)^2 + \xi + C]
	\end{align*}
	where $C$ is the counterterm for the KPZ equation associated to the model $Z$. The fact that the counterterm associated to $Z$ is a constant as opposed to being of the form $C + \tilde{C} \partial_x h$ follows from symmetry of the model. We note that as a consequence of the fixed-point formulation, we have that
	\begin{align*}
		W = \<1> + \<d20> + 2 w_X \<d10> + \partial_x h^\psi \<X1> - \partial_x h^\psi \<1> X + 2 \<d2d10> + \mathrm{polynomial}
	\end{align*}
	where the thin edges represent an integration with no derivative, the thick edges represent an integration with derivatives and where the last term is valued in the polynomial sector and where $w_X$ is the Gubinelli derivative of $W$. This can be checked by plugging the above expression for $W$ into the fixed-point problem.

	Therefore, truncating at order $\kappa - \eps$, we have that 
	\begin{align*}
		(DW)^2 &= \<d2> + 2 \<d2d1> + 2 w_X \<d1> + 4w_X \<d1d1> + 2 \partial_x h^\psi \<Xd2> - 2 \partial_x h^\psi \<d2> X - 2 \partial_x h^\psi \<2d> + 4 \<d2d1d1> + \<d20^2> 
		\\ & \qquad + w_X^2 \mathbf{1} + 2 w_X \<d2d0>
	\end{align*}
	which in turn implies that at the same truncation order
	\begin{align*}
		\mcE(h^\psi)(DW)^2 &= e^{h^\psi}\<d2> + 2e^{h^\psi} \<d2d1> + 2e^{h^\psi} w_X \<d1> + 4e^{h^\psi}w_X \<d1d1> + 2e^{h^\psi} \partial_x h^\psi \<Xd2> - 2e^{h^\psi} \partial_x h^\psi \<d2> X 
		\\ & \qquad - 2e^{h^\psi} \partial_x h^\psi \<2d> + 4e^{h^\psi} \<d2d1d1> + e^{h^\psi} \<d20^2> 
		 + e^{h^\psi}w_X^2 \mathbf{1} + 2 e^{h^\psi} w_X \<d2d0> 
		 \\ & \qquad + e^{h^\psi} \partial_x h^\psi \<d2> X.
	\end{align*}
	This has the same form as $e^{h^\psi}$ multiplied by the usual jet for $(DH)^2$ where $H$ is the solution of KPZ at the level of modelled distributions, except for the terms in the trees $\<Xd2>, \<d2> X, \<2d>$ which appear in addition to those usually contributing to KPZ. However, each of these trees doesn't contribute to the form of the renormalisation. Indeed, the counterterm corresponding to each of these trees is $0$ by the assumption on the model. The first of them additionally admits the non-trivial contraction that contracts only $\<d2>$. However, the resulting contribution is vanishing since $\Pi_x X(x) = 0$. This completes the proof of the claim made at the start of the proof.

	We therefore have that $w$ solves
	\begin{align*}
	(\partial_t - \Delta)(e^{h^\psi} w) = e^{h^\psi} [ (\partial_x w)^2 + \xi_\eps + C_\eps] 
	\end{align*}
	which implies that
	\begin{align*}
		(\partial_t - \Delta) w &= (\partial_x w)^2 + \xi_\eps + C_\eps +  e^{h^\psi} w (\partial_t - \Delta) e^{-h^\psi} + 2 \partial_x h^\psi \partial_x (e^{h^\psi} w) e^{- h^\psi}
		\\
		&= (\partial_x w)^2 + \xi_\eps + C_\eps - 2 w (\partial_x h^\psi)^2 + 2 w (\partial_x h^\psi)^2 + 2 \partial_x h^\psi \partial_x w
	\end{align*}
	which establishes the desired result for smooth noise assignments. The final claim then follows by approximation by smooth models. 
\end{proof}
 
\begin{lemma}\label{lem:blow_up_alt}
	Suppose that $\theta \in(\kappa,\eta)$ and fix parameters $R,M>0$. Then there exists $r\in(0,\frac12]$ and a constant $C<\infty$ that depends only on the fixed parameters such that the following holds.

	Let $Z\in\mathcal M^s$ have model norm at most $M$ on
	$[-1,T+2]\times\mathbb T$, and suppose that $H^{Z,\psi}$
	solves the standard abstract fixed-point problem associated to the KPZ equation on $(0,T)\times\mathbb T$ for some $T > 0$ with respect to the model $Z$ with initial data $\psi \in C^\theta$.
	Set $h^{Z,\psi}=\mathcal R^Z H^{Z,\psi}$.

	If $0<s<T$ and
	\begin{align*}
		\|h^{Z,\psi}(s,\cdot)\|_{L^\infty}\leq R,
	\end{align*}
	then
	\begin{align*}
		\|h^{Z,\psi}(t,\cdot)\|_{C^\theta}\leq C
	\end{align*}
	for every $t<T$ satisfying $s+r\leq t\leq s+2r$.
\end{lemma}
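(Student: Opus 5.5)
Restart the equation at time $s$, using only the bound $\|h^{Z,\psi}(s,\cdot)\|_{L^\infty}\le R$. Then show that after a fixed waiting time $r$ the solution has gained $C^\theta$ regularity with a bound depending only on $(R,M)$.

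\textbf{Step 1: shift to time $s$.} Let $\phi := h^{Z,\psi}(s,\cdot)$ and let $Z_s$ be the model translated in time by $s$. It is still symmetric, and its norm on $[-1,T-s+2]\times\mathbb T$ is at most $M$. By the usual restarting property of the abstract fixed point (the flow property of \cite[Theorem~7.8]{H0}), $h^{Z,\psi}(s+\cdot,\cdot)$ is the KPZ solution driven by $Z_s$ with initial datum $\phi$, on $(0,T-s)$. Here $\phi\in C^\theta$ qualitatively, since $s<T$, but we use only $\|\phi\|_{L^\infty}\le R$.

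\textbf{Step 2: solve with $L^\infty$ data via Lemma~\ref{lem:conj_fixed_point}.} By Lemma~\ref{lem:psi_lift} and the maximum principle applied to $e^{\pm h^\phi}$, the norms $\|\mcE(h^\phi)\|_{\widetilde\gamma,0}$ and $\|\mcE(-h^\phi)\|_{\widetilde\gamma,0}$ are bounded by a constant depending only on $R$. Indeed $e^{h^\phi}$ solves the heat equation with data in $[e^{-R},e^R]$, so it stays in that interval, and on $(0,1]$ its derivatives obey weighted bounds $t^{-|k|/2}$ that depend only on $R$. Lemma~\ref{lem:conj_fixed_point} then gives a time $T_*=T_{R'}$, depending only on $(R,M)$, and a solution $W\in\widehat\mcD^{\gamma,\eta}$ on $(0,T_*]$ with $\|W\|_{\gamma,\eta}\le 1$. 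By Lemma~\ref{lem:identification} and uniqueness of the KPZ fixed point, $h^{Z,\psi}(s+t,\cdot)=w(t,\cdot)+h^\phi(t,\cdot)$ for $t< \min(T_*,T-s)$, where $w=\mathcal R W$.

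\textbf{Step 3: convert to $C^\theta$ bounds.} Set $r:=\min(T_*/2,1/2)$ and take $t\in[r,2r]$ with $t<T-s$. For the deterministic part, standard parabolic regularity (Cole--Hopf together with heat-kernel smoothing) gives $\|h^\phi(t)\|_{C^\theta}\lesssim_R t^{-\theta/2}\le C(R,r)$. For the rough part, $w(t)$ is controlled via the reconstruction of $W$: since $W\in\mcD^{\gamma,\eta}$ with $\eta>\theta$ and $W$ takes values in a sector whose nonpolynomial components have regularity $\frac12-\eps>\theta$, the standard bound gives $\|\mathcal R W(t,\cdot)\|_{C^\theta}\lesssim_M \|W\|_{\gamma,\eta}\, t^{(\eta-\theta)/2\wedge 0}$. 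For $t\ge r$ this is bounded by $C(R,M)$. We use $\theta>\kappa$ to ensure that the relevant trees, with regularity at least $\frac12-\eps$, exceed $\theta$ after choosing $\eps<\kappa$ small; note that $\theta<\eta<\frac12-\eps$ as well. Summing the two parts gives the claim.

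\textbf{Main obstacle.} The delicate point is the uniformity in Step 2. The existence time of Lemma~\ref{lem:conj_fixed_point} must depend only on $R$ and $M$, which requires that the $\mcD^{\widetilde\gamma,0}$ norms of $\mcE(\pm h^\phi)$ depend only on $\|\phi\|_{L^\infty}$ and not on any Hölder norm of $\phi$. I would verify this by writing $e^{h^\phi}=P_t e^{\phi}$ and noting two facts: the derivative bounds $|\partial^k P_te^\phi|\lesssim t^{-|k|_\fs/2}e^{R}$ are exactly the weight-$0$ blow-up allowed in $\mcD^{\widetilde\gamma,0}$, and composition with $\log$ is harmless because $e^{h^\phi}\ge e^{-R}$.

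A secondary point is matching the restarted KPZ fixed point at time $s$ with $w+h^\phi$. This needs the uniqueness part of Lemma~\ref{lem:conj_fixed_point} in the larger class $\mcD^{\gamma,\eta}$: from the true solution we build a candidate $W$, namely the lift of $h-h^\phi$, check that it solves the conjugated problem, and conclude by approximating with smooth symmetric models.
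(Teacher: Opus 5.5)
Your argument is correct. Its core is the paper's: restart at time $s$, solve the conjugated problem of Lemma~\ref{lem:conj_fixed_point} with $A=\mathcal E(-h^\phi)$, $B=\mathcal E(h^\phi)$ on an interval depending only on $\|\phi\|_{L^\infty}$ and the model norm, identify via Lemma~\ref{lem:identification}, and read off the $C^\theta$ bound on $[r,2r]$ from $\|W\|_{\gamma,\eta}\le 1$ together with heat smoothing of $h^\phi$. The difference is where the smooth approximation happens.

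You restart the rough solution via the flow property and apply the $Z\in\mathcal M^s$ clause of Lemma~\ref{lem:identification} to the datum $\phi=h^{Z,\psi}(s,\cdot)$. This gives the exact identity $h^{Z,\psi}(s+\cdot,\cdot)=h^\phi+\mathcal R W$, so no limit is needed at the end. However, that clause itself rests on an approximation with the fixed, merely $C^\theta$ datum $\phi$. That approximation needs continuity of $W$ in the model and classical uniqueness for $C^\theta$ data (e.g.\ via Cole--Hopf).

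The paper instead approximates $Z$ by smooth symmetric $Z_n$ from time $0$. It restarts each classical $h_n$ at $s$ from the qualitatively smooth datum $\varphi_n=h_n(s,\cdot)$, and passes the bound to $h^{Z,\psi}(t,\cdot)$ by lower semicontinuity of the $C^\theta$ norm under uniform convergence. It therefore uses only the smooth-model part of Lemma~\ref{lem:identification} and uniform bounds on the $W_n$. It needs neither a rough restart property nor convergence of the $W_n$.

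Your closing ``secondary point'' should be dropped. The matching needs no uniqueness for the conjugated problem, and the reverse route you sketch does not work as stated. Indeed, $H-\mathcal H^\phi$ (with $\mathcal H^\phi$ the Taylor lift of $h^\phi$) is not a solution of the conjugated abstract equation. It has no $\mathcal I(X\Xi)$ component, whereas any solution of the conjugated problem has $\mathcal I(X\Xi)$-coefficient $\partial_x h^\phi$, coming from $A\,\mathcal P_\gamma(B\Xi)$ (cf.\ the expansion in the proof of Lemma~\ref{lem:identification}). So the two agree only after reconstruction. The direction you already use in Step~2 is the correct one: conjugated solution, then renormalised KPZ, then uniqueness for KPZ.

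Finally, in Step~3 the relevant constraint is $\theta<\eta<\alpha+2$; the hypothesis $\theta>\kappa$ plays no role there.
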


\begin{proof}
	We first assemble the uniform estimate for the auxiliary construction. By Lemma~\ref{lem:psi_lift} we have
	\begin{align*}
		\|\mathcal E(-h^\varphi)\|_{\widetilde\gamma,0;1}
		+
		\|\mathcal E(h^\varphi)\|_{\widetilde\gamma,0;1}
		\leq C_R
	\end{align*}
	uniformly over $\|\varphi\|_{L^\infty}\leq R+1$.

	By Lemma~\ref{lem:conj_fixed_point} and its proof, we may therefore choose $r\in(0,\frac12]$ such that the auxiliary fixed-point problem with coefficients
	\begin{align*}
		A=\mathcal E(-h^\varphi),
		\qquad
		B=\mathcal E(h^\varphi)
	\end{align*}
	has a solution on $(0,2r]$ satisfying
	\begin{align*}
		\|W\|_{\gamma,\eta;2r}\leq 1,
	\end{align*}
	uniformly over these initial conditions and models of norm at most $M+1$ on $[-1,2]\times\mathbb T$.

	On $[r,2r]\times\mathbb T$, this weighted bound implies a uniform bound in $\mathcal D^\gamma$.
	Since $\theta<\alpha+2$ which is the smallest non-polynomial degree appearing in the jet for $W$, we have that
	\begin{align*}
		\sup_{u\in[r,2r]}
		\|\mathcal RW(u,\cdot)\|_{C^\theta}
		\leq C_{M,R,r}.
	\end{align*}
	By Lemma~\ref{lem:psi_lift}, we also have
	\begin{align*}
		\sup_{u\in[r,2r]}
		\|h^\varphi(u,\cdot)\|_{C^\theta}
		\leq C_R(1+r^{-\theta/2}).
	\end{align*}
	Therefore
	\begin{align}
		\sup_{u\in[r,2r]}
		\|h^\varphi(u,\cdot)+\mathcal RW(u,\cdot)\|_{C^\theta} \le C
		\label{eq:auxiliary_delayed_bound}
	\end{align}
	where $C$ is uniform over the initial conditions and models under consideration.

	We now use these auxiliary estimates to control the original solution. Fix $s$ as in the statement and choose smooth symmetric models $Z_n\to Z$.
	We let $H_n=H^{Z_n,\psi}$ be the corresponding solutions of the standard fixed-point problem associated to KPZ. We also write
	\begin{align*}
		h_n=\mathcal R^{Z_n}H_n,
		\qquad
		\varphi_n=h_n(s,\cdot).
	\end{align*}
	The standard regularity structure solution theory implies that for sufficiently large $n$, $H_n$ exists on every fixed compact time interval prior to time $T$ , and that $h_n\to h^{Z,\psi}$ locally uniformly on positive times (see \cite[Theorem~7.8, Proposition~7.11 and Corollary~7.12]{H0}). In particular,
	\begin{align*}
		\varphi_n\longrightarrow h^{Z,\psi}(s,\cdot)
		\quad\text{in }L^\infty,
	\end{align*}
	so $\|\varphi_n\|_{L^\infty}\leq R+1$ for all sufficiently large $n$.

	Let $\tau_s Z_n$ denote the model obtained by shifting the time origin to $s$. For $n$ sufficiently large, the assumed model bound and the convergence $Z_n\to Z$ give a bound of $M+1$ for these shifted models on $[-1,2]\times\mathbb T$.  We then construct $W_n$ on $(0,2r]$ as above with respect to the model $\tau_s Z_n$ and the choices $A = \mathcal E(-h^{\varphi_n})$ and $B = \mathcal E(h^{\varphi_n})$. By Lemma~\ref{lem:identification}
	\begin{align*}
		h^{\varphi_n}(u,\cdot)
		+\mathcal R^{\tau_s Z_n}W_n(u,\cdot)
	\end{align*}
	solves the same classical renormalised KPZ equation as
	$h_n(s+u,\cdot)$, with the same initial condition $\varphi_n$.
	Classical uniqueness (applied with the qualitatively smooth initial data $\varphi_n$) therefore gives
	\begin{align*}
		h_n(s+u,\cdot)
		=
		h^{\varphi_n}(u,\cdot)
		+\mathcal R^{\tau_s Z_n}W_n(u,\cdot)
	\end{align*}
	throughout their common interval of existence.

	We then fix $t<T$ such that $s+r\leq t\leq s+2r$. For all sufficiently large $n$, the original approximating
	solution exists up to time $t$, so
	\eqref{eq:auxiliary_delayed_bound} yields
	\begin{align*}
		\|h_n(t,\cdot)\|_{C^\theta}\leq C.
	\end{align*}
	Since $h_n(t,\cdot)\to h^{Z,\psi}(t,\cdot)$ uniformly and
	$0<\theta<1$, lower semicontinuity of the H\"older norm gives
	\begin{align*}
		\|h^{Z,\psi}(t,\cdot)\|_{C^\theta}
		\leq
		\liminf_{n\to\infty}\|h_n(t,\cdot)\|_{C^\theta}
		\leq C.
	\end{align*}
	The constant is independent of $s$ and $t$, which proves
	the claim.
\end{proof}
We will combine the previous result with a short-time solution theory for constant initial data that is independent of the size of that initial data.
\begin{lemma}\label{lem:constant_initial_data}
	Fix an admissible model $Z$.
	There exists $T>0$, independent of $a\in\mathbb R$, such that
	the fixed-point problem
	\begin{align*}
		H^{Z,a}
		=
		\mathcal P_\gamma^Z\mathbf1_+
		\bigl[(DH^{Z,a})^2+\Xi\bigr]
		+\mathcal G a
	\end{align*}
	has a unique solution in $\mcD^{\gamma,\eta}$ on
	$(0,T]\times\mathbb T$.
	Moreover,
	\begin{align*}
		H^{Z,a}=a\mathbf1+H^{Z,0},
		\qquad
		H^{Z,a}-a\mathbf1\in\widehat{\mcD}^{\gamma,\eta}.
	\end{align*}
	Furthermore, for every $\delta > 0$, there exists a $T_{\delta} > 0$ such that 
	\begin{align*}
		\|H^{Z,a}\|_{\gamma, \eta; T_\delta} \le {|a| + \delta.}
	\end{align*}
	This $T_\delta$ can be chosen locally uniformly over the space of models and independently of $a$.
\end{lemma}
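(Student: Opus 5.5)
The plan is to reduce everything to the case $a=0$ by exploiting the shift invariance of KPZ in solution space. First, for $a=0$ the fixed-point problem
\begin{align*}
	H^{Z,0}=\mathcal P_\gamma^Z\mathbf 1_+\bigl[(DH^{Z,0})^2+\Xi\bigr]
\end{align*}
is exactly the special case $A=B=\mathbf 1$ of Lemma~\ref{lem:conj_fixed_point}, since the constant modelled distribution $\mathbf 1$ lies in $\mcD^{\widetilde\gamma,0}$ with norm $1$. This yields $T_1>0$, depending only on a local bound on the model, and a unique solution $H^{Z,0}\in\widehat{\mcD}^{\gamma,\eta}$ on $(0,T_1]\times\mathbb T$. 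The lemma also gives uniqueness among solutions in $\mcD^{\gamma,\eta}$. Alternatively one can invoke \cite[Theorem~7.8]{H0} directly with zero initial data.

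Next I would treat general $a$. The key observation is that $\mathcal G a=a\mathbf 1$, because the heat semigroup fixes constants on the torus and the lift of a constant function is the constant polynomial $a\mathbf 1$. Moreover $D(a\mathbf 1)=0$, so $D(a\mathbf 1+W)=DW$ for every $W$. Hence $H$ solves the problem with data $a$ if and only if $W=H-a\mathbf 1$ solves the problem with data $0$. This simultaneously gives existence on $(0,T_1]$, with $T_1$ independent of $a$; the identity $H^{Z,a}=a\mathbf 1+H^{Z,0}$; the membership $H^{Z,a}-a\mathbf 1=H^{Z,0}\in\widehat{\mcD}^{\gamma,\eta}$; and uniqueness, since any two solutions for data $a$ yield, after subtracting $a\mathbf 1$, two solutions for data $0$. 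The only point requiring care is that $a\mathbf 1$ belongs to $\mcD^{\gamma,\eta}$ on the torus, which holds because constants have vanishing higher-order jet components and are bounded with weight $\eta>0$.

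For the quantitative bound, the triangle inequality gives
\begin{align*}
	\|H^{Z,a}\|_{\gamma,\eta;T}\le \|a\mathbf 1\|_{\gamma,\eta;T}+\|H^{Z,0}\|_{\gamma,\eta;T}.
\end{align*}
For the constant, the $\gamma$-increment seminorm vanishes and the weighted sup part equals $|a|$, up to the normalisation of the weight, which I take to be $\le 1$ on short time intervals. It therefore remains to show $\|H^{Z,0}\|_{\gamma,\eta;T}\to 0$ as $T\to0$. I would obtain this from the a priori bound \eqref{eq:map-bound} applied at the fixed point:
\begin{align*}
	\|H^{Z,0}\|_{\gamma,\eta;T}\le C_R\bigl(T^{\overline\gamma/2}+T^{\eta/2}\|H^{Z,0}\|_{\gamma,\eta;T}^2\bigr),
\end{align*}
combined with the bound $\|H^{Z,0}\|_{\gamma,\eta;T}\le 1$ from the contraction ball. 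This gives $\|H^{Z,0}\|_{\gamma,\eta;T}\le 2C_RT^{\overline\gamma/2}$ for small $T$, so $T_\delta$ can be chosen with $2C_RT_\delta^{\overline\gamma/2}\le\delta$. Since $C_R$ depends only on the model norm, this choice is locally uniform in $Z$ and independent of $a$.

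The main obstacle I anticipate is this last smallness step. The $\mcD^{\gamma,\eta}$ norm restricted to $(0,T]$ does not automatically shrink as $T\to0$; it does so only because the Schauder estimates of \cite[Theorem~7.1]{H0} yield the explicit positive powers $T^{\eta/2}$ and $T^{\overline\gamma/2}$. I also need to check that the norm convention makes $\|a\mathbf 1\|_{\gamma,\eta;T}$ exactly $|a|$ rather than $C|a|$; if only $C|a|$ is available, I would state the bound in that form.
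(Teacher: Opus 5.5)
Your proposal is correct and follows the paper's proof essentially verbatim. Both solve the $a=0$ problem via Lemma~\ref{lem:conj_fixed_point} with $A=B=\mathbf 1$, use $\mathcal G a=a\mathbf 1$ on positive times together with $D(a\mathbf 1)=0$ to transfer existence, uniqueness and the decomposition $H^{Z,a}=a\mathbf 1+H^{Z,0}$, and get the bound from the triangle inequality. Your derivation of $\|H^{Z,0}\|_{\gamma,\eta;T}\le 2C_RT^{\overline\gamma/2}$ from \eqref{eq:map-bound} spells out what the paper only asserts. Your normalisation worry is moot: since $\eta>0$, the weight on the degree-$0$ component is $1$ and $\Gamma_{xy}\mathbf 1=\mathbf 1$, so $\|a\mathbf 1\|_{\gamma,\eta;T}=|a|$ exactly.
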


\begin{proof}
	The standard techniques of regularity structures, or in the setting of this note an application of Lemma~\ref{lem:conj_fixed_point} with $A=B=\mathbf{1}$, yields a unique solution $H^{Z,0}\in\widehat{\mcD}^{\gamma,\eta}$ to the fixed-point problem with vanishing initial data on a sufficiently short interval $(0,T]$. Furthermore, this approach yields the existence of $T_\delta$ such that 
	\begin{align*}
	\|H^{Z,0}\|_{\gamma, \eta; T_\delta} \le \delta
	\end{align*}
	where $T_\delta$ can be chosen locally uniformly over the driving model.

	Since the heat semigroup preserves constants, its Taylor lift satisfies $\mathcal G a=a\mathbf1$ on positive times. Therefore, if $H^{Z,a} = a \mathbf{1} + H^{Z,0}$, then
	\begin{align*}
		H^{Z,a} &= \mathcal P_\gamma^Z\mathbf1_+\bigl[(DH^{Z,a})^2+\Xi\bigr] +\mathcal G a.
	\end{align*}
	This implies that $H^{Z,a}$ is a solution of the fixed-point problem for initial data $a$ on the same interval for every $a\in\mathbb R$. It is automatic that $H^{Z,a}$ satisfies the desired bound for the same value of $T_\delta$. Uniqueness for the problem started by $a$ follows similarly since given a solution $H \in D^{\gamma, \eta}$ with initial data $a$, $H - a \mathbf{1}$ solves the problem with vanishing initial data.
\end{proof}
The next ingredient is a classical comparison principle.
\begin{lemma}\label{lem:comparison}
	Suppose that for $i = 1,2$,
	\begin{align*}
		(\partial_t - \Delta) h^i = (\partial_x h^i)^2 + C + f
	\end{align*}
	with $h^i(0, \cdot) = \psi^i$ with $\psi^1 \le \psi^2$ where $C$ is a constant and $f$ is a smooth function. 
	We have that $h^1 \le h^2$.
\end{lemma}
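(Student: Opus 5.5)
The most direct route is the Cole--Hopf transform, with the classical maximum principle as the backup. We work at the level of classical (smooth) solutions on $[0,T]\times\mathbb{T}$, continuous up to time $0$, with $h^i(0,\cdot)=\psi^i$. Set $v^i = e^{h^i}$. A direct computation gives
\begin{align*}
	(\partial_t - \Delta) v^i = e^{h^i}\bigl[(\partial_t-\Delta)h^i - (\partial_x h^i)^2\bigr] = (C+f)\, v^i .
\end{align*}
So both $v^1$ and $v^2$ solve the same \emph{linear} equation $(\partial_t - \Delta) v = (C+f)v$ with smooth, bounded potential $C+f$ on the compact set $[0,T]\times\mathbb{T}$. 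Their initial data are ordered, $e^{\psi^1}\le e^{\psi^2}$. The difference $z = v^2 - v^1$ therefore solves the same linear equation with $z(0,\cdot)\ge 0$.

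The key step is the classical weak maximum principle for linear parabolic equations with a bounded zeroth-order coefficient. Let $K = \sup|C+f|$ and set $\tilde z = e^{-(K+1)t} z$. Then
\begin{align*}
	(\partial_t - \Delta)\tilde z = (C+f-K-1)\tilde z ,
\end{align*}
where the coefficient $C+f-K-1$ is strictly negative. Suppose $\tilde z$ attains a negative minimum at a point $(t_0,x_0)$ with $t_0>0$; no spatial boundary arises because $\mathbb{T}$ is periodic. At that point $\partial_t\tilde z\le 0$ and $\Delta \tilde z\ge 0$, so the left-hand side is $\le 0$. The right-hand side, however, is strictly positive, which is a contradiction. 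Hence the minimum of $\tilde z$ is attained at $t=0$, where $\tilde z\ge 0$. This gives $z\ge 0$, i.e.\ $e^{h^1}\le e^{h^2}$, and monotonicity of $\log$ yields $h^1\le h^2$.

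If one prefers to avoid exponentials, the same argument applies directly to $d = h^2-h^1$. It satisfies
\begin{align*}
	(\partial_t-\Delta) d = (\partial_x h^1+\partial_x h^2)\,\partial_x d ,
\end{align*}
a linear equation with a bounded drift (by smoothness on compact sets away from $t=0$). The same minimum-point argument applies, since the drift term vanishes at an interior spatial minimum.

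The main obstacle is regularity near $t=0$. If $\psi^i$ is only continuous, or only $L^\infty$, the coefficients $\partial_x h^i$ may blow up as $t\to0$. The Cole--Hopf route avoids this, because the potential $C+f$ is bounded uniformly up to time $0$. What remains is to make sure that $v^i$ attains its initial data continuously, or in the $L^\infty$ case weakly-$*$. For the continuous case one applies the minimum argument on $[\delta,T]$ and lets $\delta\to0$ using uniform continuity of $z$. For merely $L^\infty$ data one instead uses the Duhamel/Feynman--Kac representation of the linear equation, whose kernel is positive, to conclude $z\ge 0$ from $z(0)\ge 0$.
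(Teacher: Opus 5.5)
Your proof is correct and is essentially the paper's argument: the paper also linearises via the Cole--Hopf transform, using $\exp(h^i - Ct)$ to absorb the constant so that $(\partial_t - \Delta)u^i = f u^i$, and then applies the maximum principle to $u^1 - u^2$. The differences are cosmetic. You keep $C$ in the potential, and you spell out the $e^{-(K+1)t}$ reweighting (needed because $f$ has no sign) and the behaviour at $t=0$, both of which the paper leaves implicit.
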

\begin{proof}
	Since $h^1 \le h^2$ if and only if $\exp(h^1 - Ct) \le \exp(h^2 - Ct)$, it suffices to prove the same result for solutions of 
	\begin{align*}
		(\partial_t - \Delta) u^i = u^i f. 
	\end{align*}
	The result then follows by the maximum principle applied to $u^1 - u^2$. 
\end{proof}
\begin{corollary}\label{cor:comparison}
	Let $h^i$ denote the solutions to the KPZ equation with initial data $\psi^i$. If $\psi^1 \le \psi^2$ then $h^1 \le h^2$ up to the explosion time of the $C^\theta$ norms.
\end{corollary}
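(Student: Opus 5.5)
The plan is to prove the statement first for smooth symmetric models and then pass to a general $Z \in \mathcal{M}^s$ by approximation, in the same way as in the proof of Lemma~\ref{lem:blow_up_alt}. Let $Z$ be in $\mathcal{M}^s$. Fix initial data $\psi^1 \le \psi^2$ in $C^\theta$ and a time $t$ strictly before the explosion time of the $C^\theta$ norms of both $h^1 = h^{Z,\psi^1}$ and $h^2 = h^{Z,\psi^2}$. Choose smooth symmetric models $Z_n \to Z$, for instance BPHZ lifts of mollified noise with a symmetric mollifier, and write $h^i_n := \mathcal{R}^{Z_n} H^{Z_n,\psi^i}$.

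By the standard continuity of the solution map (\cite[Theorem~7.8, Proposition~7.11, Corollary~7.12]{H0}, or \cite{BCCH} in general), for $n$ large enough both $H^{Z_n,\psi^i}$ exist on $[0,t]$. Moreover $h^i_n \to h^i$ uniformly on $[0,t]\times\mathbb{T}$; uniform convergence all the way down to time $0$ holds because $\psi^i \in C^\theta$ with $\theta>0$. For a smooth model, the reconstruction of the abstract solution is a classical solution of
\begin{align*}
(\partial_t - \Delta) h^i_n = (\partial_x h^i_n)^2 + C_n + \xi_n
\end{align*}
with initial data $\psi^i$. Here $\xi_n$ is the smooth noise, and $C_n$ is a constant, the same constant for $i = 1,2$, because it depends only on $Z_n$. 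The fact that no term of the form $\tilde C_n \partial_x h$ appears is exactly where symmetry of the model enters, as already noted in the proof of Lemma~\ref{lem:identification}.

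Lemma~\ref{lem:comparison} then applies with $f = \xi_n$ and $C = C_n$, and gives $h^1_n \le h^2_n$ on $[0,t]\times\mathbb{T}$. Letting $n\to\infty$ and using uniform convergence gives $h^1 \le h^2$ on $[0,t]$. Since $t$ was arbitrary below the explosion time, the claim follows.

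The main obstacle is to justify that the solutions $h^i_n$ with merely $C^\theta$ initial data are regular enough near $t=0$ for the maximum-principle argument in Lemma~\ref{lem:comparison}. That argument runs the maximum principle on $e^{h^1_n - C_n t} - e^{h^2_n - C_n t}$, so I need $h^i_n$ classical for $t>0$ and continuous up to $t=0$. For smooth $Z_n$ this is classical parabolic regularity, since the Cole–Hopf transform $u = e^{h - C_n t}$ solves a linear equation with smooth potential $\xi_n$ and continuous initial data. If needed, I would also mollify the initial data and use continuity in $\psi$ of the solution map. A second point to check is that the convergence $h_n \to h$ holds uniformly on the whole interval up to $t$, including near time $0$. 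This follows from the $\mathcal{D}^{\gamma,\eta}$ framework with $\eta \le \theta$.
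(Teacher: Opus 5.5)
Your proposal is correct and is essentially the paper's argument: apply Lemma~\ref{lem:comparison} to smooth symmetric approximations, where symmetry makes the counterterm a single constant shared by both solutions, and then pass to the limit using continuity of the solution map up to the $C^\theta$ explosion time. The points you add (continuity at $t=0$ for the maximum principle, and why symmetry rules out a $\tilde C\,\partial_x h$ term) are details the paper leaves implicit, and your uniform convergence down to $t=0$ is not actually needed, since the inequality at $t=0$ is given and locally uniform convergence on positive times suffices.
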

\begin{proof}
Apply Lemma~\ref{lem:comparison} to the KPZ equation driven by $\xi_\eps$ and then send $\eps \to 0$. The fact that the result holds up to the explosion time of the $C^\theta$ norms follows from the fact that well-posedness theory provided by regularity structures holds up to this time.
\end{proof}
\begin{theorem}
	Maximal solutions to KPZ (with respect to a symmetric model) are global in the sense that for every $T > 0$, $\sup_{0 \le s \le T} \|h(s,\cdot)\|_{C^\theta} < \infty$. 
\end{theorem}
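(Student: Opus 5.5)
We combine the blow-up alternative of Lemma~\ref{lem:blow_up_alt} with barriers built from Lemma~\ref{lem:constant_initial_data} and the comparison principle of Corollary~\ref{cor:comparison}. Let $T^*$ be the explosion time of the $C^\theta$ norm, and suppose for contradiction that $T^*<\infty$. Fix a model $Z\in\mathcal M^s$. Its norm on $[-1,T^*+2]\times\mathbb T$ is some finite $M$.

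\emph{Step 1: a uniform $L^\infty$ bound.} Let $\delta=1$ and let $T_1>0$ be the time from Lemma~\ref{lem:constant_initial_data}. This time is independent of the constant $a$, and it can be taken uniformly over the time-shifted models $\tau_s Z$, $s\in[0,T^*]$, because the shifted models are uniformly bounded. For $s<T^*$ set $a_s=\|h(s,\cdot)\|_{L^\infty}$. Then $-a_s\le h(s,\cdot)\le a_s$.

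The solutions started from the constants $\pm a_s$ at time $s$ are $\pm a_s\mathbf 1+H^{\tau_sZ,0}$. Their reconstructions $\pm a_s+\mathcal R H^{\tau_s Z,0}$ lie within $\pm a_s$ plus a uniformly bounded error $c$. The $\mathcal D^{\gamma,\eta}$ bound $\|H^{\tau_sZ,0}\|_{\gamma,\eta;T_1}\le 1$ controls the sup norm of the reconstruction, since the weight $\eta>0$ gives vanishing at time $0$; one can shrink $T_1$ if needed.

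Corollary~\ref{cor:comparison}, applied after restarting at time $s$, gives
\[
\|h(t,\cdot)\|_{L^\infty}\le \|h(s,\cdot)\|_{L^\infty}+c \qquad \text{for } t\in[s,s+T_1]\cap[s,T^*).
\]
For this we also need the barrier solutions to persist on $[s,s+T_1]$. They do, since they are explicit on that interval. Iterating over at most $\lceil T^*/T_1\rceil$ steps yields
\[
\sup_{t<T^*}\|h(t,\cdot)\|_{L^\infty}\le \|\psi\|_{L^\infty}+c\,(T^*/T_1+1)=:R<\infty .
\]

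\emph{Step 2: upgrade to $C^\theta$.} Apply Lemma~\ref{lem:blow_up_alt} with these $R$ and $M$ to obtain $r$ and $C$. For any $t\in[r,T^*)$, choose $s=t-r$ (or any $s$ with $t-2r\le s\le t-r$). Then $\|h(s,\cdot)\|_{L^\infty}\le R$, so $\|h(t,\cdot)\|_{C^\theta}\le C$. On $[0,r]$, the $C^\theta$ norm is bounded because $r<T^*$ can be assumed after shrinking $r$; if $T^*\le r$, then $[0,T^*)$ is handled by local theory up to a compact subinterval together with the lemma. Hence $\sup_{t<T^*}\|h(t,\cdot)\|_{C^\theta}<\infty$, which contradicts the definition of $T^*$. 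The standard continuation argument (\cite[Theorem~7.8]{H0}, \cite{BCCH}) lets us restart from $h(t,\cdot)\in C^\theta$ with a uniform existence time and extend past $T^*$. Thus $T^*=\infty$, and for each $T$ the supremum over $[0,T]$ is finite.

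\emph{Main obstacle.} The delicate point is Step 1: justifying the comparison between $h$ and the barriers across restarts at rough level. This requires applying Corollary~\ref{cor:comparison} to the solution restarted at time $s$ with respect to $\tau_sZ$, i.e.\ at the level of smooth approximations $Z_n\to Z$, as in the proof of Lemma~\ref{lem:blow_up_alt}. It also requires checking that the sup-norm deviation $c$ of $\mathcal R H^{\tau_sZ,0}$ is uniform in $s$. I would handle this exactly as in Lemma~\ref{lem:blow_up_alt}: compare the approximating classical solutions with their classical barriers via Lemma~\ref{lem:comparison}, then pass to the limit using local uniform convergence.
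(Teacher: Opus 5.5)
Your proposal is correct and follows essentially the same route as the paper. You use barriers started from the constants $\pm\|h(s,\cdot)\|_{L^\infty}$, obtained from Lemma~\ref{lem:constant_initial_data} and Corollary~\ref{cor:comparison}, and iterate over steps of uniform length to get an $L^\infty$ bound up to $T^*$; you then apply Lemma~\ref{lem:blow_up_alt}, with $r$ shrunk below $T^*$, to upgrade this to a $C^\theta$ bound that contradicts the definition of the explosion time, and you are slightly more explicit than the paper in controlling the barrier's sup-norm deviation by a constant $c$ from the $\mathcal D^{\gamma,\eta}$ bound and in requiring the step length to be uniform over the shifted models $\tau_s Z$.
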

\begin{proof}
	Suppose that $T_\mathrm{max}< \infty$ where $T_\mathrm{max}$ is the explosion time of the $C^\theta$ norm of the solution and fix a time $t_0 < T_\mathrm{max}$ and set $A_+ = \sup_{x \in \mathbb{T}} h(t_0,x)$ and $A_- = \inf_{x \in \mathbb{T}} h(t_0,x)$. We let $h_{A_\pm}$ be the solutions started at time $t_0$ with initial data $A_\pm$. By Corollary~\ref{cor:comparison}, we have that for $t \in [t_0, (t_0 + T_\delta) \wedge T_\mathrm{max})$
	\begin{align*}
		h_{A_-}(t,x) \le h(t,x) \le h_{A_+}(t,x).
	\end{align*}
	By Lemma~\ref{lem:constant_initial_data}, we then have that 
	\begin{align*}
		A_- - \delta \le h(t,x) \le A_+ + \delta.
	\end{align*}
	Therefore 
	\begin{align*}
		\sup_{t \in [t_0, (t_0 + T_\delta) \wedge T_\mathrm{max})} \|h(t, \cdot)\|_\infty \le \|h(t_0, \cdot)\|_\infty + \delta.
	\end{align*}
	Iterating this estimate yields
	\begin{align*}
		\sup_{t < T_\mathrm{max}} \|h(t, \cdot)\|_\infty \le \|h(0, \cdot)\|_\infty + \delta \Bigl \lceil \frac{T_\mathrm{max}}{T_\delta} \Bigr \rceil < \infty.
	\end{align*}
	By Lemma~\ref{lem:blow_up_alt}, we then have that
	\begin{align*}
		\sup_{T_{\mathrm{max}}/2 < t < T_{\mathrm{max}}} \|h(t, \cdot)\|_{C^{\theta}} \le C
	\end{align*}
	for some constant $C$ which depends only on $\|h(0, \cdot)\|_\infty, T_\mathrm{max}$ and $\delta$. Here we have used the fact that $r$ in Lemma~\ref{lem:blow_up_alt} can be chosen to be arbitrarily small and in particular less than $T_\mathrm{max}/4$. Since $T_\mathrm{max}$ is the explosion time of the $C^{\theta}$ norm, this is a contradiction.
\end{proof}

\section{Global-in-Time Existence for gKPZ}
We now turn to the proof of the main result of this paper. We will closely follow the overall strategy illustrated in the previous section in the case of KPZ. 

To set notation, we are interested in the (appropriately renormalised version of the) gKPZ equation
\begin{align*}
	(\partial_t - \Delta) u = \Gamma(u) (\partial_x u)^2 + g(u) \partial_x u + h(u) + \sigma(u) \xi
\end{align*}
with initial data $u(0, \cdot) = \psi$. 

The first step is to derive a fixed-point problem with $0$ initial data that will play the analogous role to \eqref{eq:conj_KPZ}.
For expositional reasons, we begin by performing the analogous heuristic calculation to the one performed in the case of the ordinary KPZ equation to deal with the critical behaviour at time $0$ for $L^\infty$ initial data.

We let $u^\psi$ denote the solution to 
\begin{align*}
	(\partial_t - \Delta) u^\psi = \Gamma(u^\psi) (\partial_x u^\psi)^2
\end{align*}
with initial data $\psi$. Then $v = u - u^\psi$ solves
\begin{align*}
	(\partial_t - \Delta) v &= \Gamma(v + u^\psi) (\partial_x v)^2 + 2 \Gamma(v + u^\psi) (\partial_x u^\psi) (\partial_x v) + [\Gamma(v + u^\psi) - \Gamma(u^\psi)] (\partial_x u^\psi)^2
	\\
	& \qquad + g(v + u^\psi) \partial_x v + g(v + u^\psi) \partial_x u^\psi + h(v + u^\psi) + \sigma(v + u^\psi) \xi
\end{align*}
In terms of scaling at time $0$, the second term on the right-hand side is the analogue of $(\partial_x h^\psi) (\partial_x u)$ in the calculation we did for ordinary KPZ. It is scaling critical. In addition, the third term on the right-hand side (which would vanish for ordinary KPZ where $\Gamma \equiv 1$) is also scaling critical since one can expect $[\Gamma(v + u^\psi) - \Gamma(u^\psi)] \sim v$ in terms of scaling at time $0$ by the Mean-Value Theorem since $v$ has vanishing initial data. 

To deal with the latter term, we add and subtract $\Gamma^\prime(u^\psi) v$ so that the Taylor expansion is to one higher order. This yields
\begin{align*}
	(\partial_t - \Delta) v &= \Gamma(v + u^\psi) (\partial_x v)^2 + 2 \Gamma(v + u^\psi) (\partial_x u^\psi) (\partial_x v) + \Gamma^\prime(u^\psi) v(\partial_x u^\psi)^2
	\\
	& \qquad  +  \sigma(v + u^\psi) \xi + \mathcal{N}_\psi(v)
\end{align*}
where
\begin{align*}
	\mathcal{N}_\psi(v) &=
	[\Gamma(v + u^\psi) - \Gamma(u^\psi) - \Gamma^\prime(u^\psi) v] (\partial_x u^\psi)^2 + g(v + u^\psi) \partial_x v 
	\\
	& \qquad + g(v + u^\psi) \partial_x u^\psi + h(v + u^\psi)
\end{align*}
groups all the terms which have subcritical scaling at time $0$. We now let $\Phi$ solve the ODE $\Phi^\prime(r) = \Gamma(r)$ with $\Phi(0) = 0$ and consider $w = e^{\Phi(u^\psi)} v$. We have
\begin{align*}
	(\partial_t - \Delta) e^{\Phi(u^\psi)} &= e^{\Phi(u^\psi)} [\Phi'(u^\psi) (\partial_t - \Delta)u^\psi - (\Phi''(u^\psi) + \Phi'(u^\psi)^2)(\partial_x u^\psi)^2]
	\\
	& = - e^{\Phi(u^\psi)} \Gamma'(u^\psi) (\partial_x u^\psi)^2.
\end{align*}
This yields
\begin{align*}
	(\partial_t - \Delta) w &= e^{\Phi(u^\psi)} (\partial_t - \Delta) v + v (\partial_t - \Delta)e^{\Phi(u^\psi)} - 2 \partial_x v \partial_x e^{\Phi(u^\psi)} 
	\\
	& = e^{\Phi(u^\psi)} \bigl [ \Gamma(v + u^\psi) (\partial_x v)^2 + 2 \Gamma(v + u^\psi) (\partial_x u^\psi) (\partial_x v) 
	\\
	& \qquad  +  \sigma(v + u^\psi) \xi + \mathcal{N}_\psi(v) - 2 \Phi'(u^\psi) \partial_x v \partial_x u^\psi \bigr ]
	\\
	& = e^{\Phi(u^\psi)} [\Gamma(v + u^\psi) (\partial_x v)^2 + 2 (\Gamma(v + u^\psi) - \Gamma(u^\psi)) \partial_x u^\psi \partial_x v 
	\\
	& \qquad + \sigma(v + u^\psi) \xi + \mathcal{N}_\psi(v)]
\end{align*}
where now all terms on the right hand side formally lead to subcritical scaling since $\Gamma(v + u^\psi) - \Gamma(u^\psi) \sim v$ near to time $0$. 

This suggests to start by looking at the fixed point problem
\begin{align*}
	\mcV & = \mcE(- \Phi(u^\psi)) \mathcal{P}_\gamma \mathbf{1}_+ \mcE(\Phi(u^\psi)) \Bigl [ \Gamma(\mcV + \mcU^\psi) (D\mcV)^2 + 2 \hat{\Gamma}^0(\mcV, \mcU^\psi) D \mcU^\psi D \mcV + \sigma(\mcV + \mcU^\psi) \Xi 
	\\
	& \qquad + \hat{\Gamma}^1(\mcV, \mcU^\psi)(D\mcU^\psi)^2 + g(\mcV + \mcU^\psi) D (\mcV + \mcU^\psi) + h(\mcV + \mcU^\psi) \Bigr ]
\end{align*}
where $\mcU^\psi$ is the lift of $u^\psi$ via its Taylor jet and where 
\begin{align*}
	\hat{\Gamma}^0(\mcV, \mcU^\psi) &= \mcV \int_0^1 \Gamma'(\mcU^\psi + \theta \mcV) d\theta = \mcV \widetilde{\Gamma}^0(\mcV, \mcU^\psi),
	\\
	\hat{\Gamma}^1(\mcV, \mcU^\psi) &= \mcV^2 \int_0^1 (1- \theta) \Gamma''(\mcU^\psi + \theta \mcV) d \theta = \mcV^2 \widetilde{\Gamma}^1(\mcV, \mcU^\psi).
\end{align*}
Note that $\widetilde{\Gamma}^i$ are smooth functions since $\Gamma$ is a smooth function. 

We now suppose that $|\Xi|_\fs = \alpha \in (-2, -3/2)$. The restriction to noise regularity at least as bad as $1+1$-dimensional space-time white noise is only used to simplify the tracking of parameters. 

In what follows, we let $\gamma = \eps - \alpha, \eta = \alpha + 2 - \eps$ and $\bar{\gamma} = \gamma + \alpha = \eps$ where $0 < \eps \le \frac{2 + \alpha}{2}$.
\begin{lemma}\label{lem:gKPZ_conj_fixed_point}
	Suppose that $\widetilde\gamma>\gamma$ and that $A,B,C \in \mcD^{\widetilde\gamma,0}$ take values in the polynomial sector.

	Then, for every $R > 0$, there exists $T_R > 0$ such that, whenever
	\begin{align*}
		\|A\|_{\widetilde\gamma,0;T_R} + \|B\|_{\widetilde\gamma,0;T_R} + \|C\|_{\widetilde \gamma, 0; T_R} \leq R,
	\end{align*}
	the fixed-point problem
	\begin{align*}
		\mcV& =A\mathcal P_\gamma\mathbf 1_+B\Bigl [  \Gamma(\mcV + C) (D\mcV)^2 + 2 \widetilde{\Gamma}^0(\mcV, C) \mcV D C D \mcV + \sigma(\mcV + C) \Xi 
	\\
	& \qquad + \widetilde{\Gamma}^1(\mcV, C) \mcV^2 (DC)^2 + g(\mcV + C) D (\mcV + C) + h(\mcV + C) \Bigr ]
	\end{align*}
	on $(0, T_R] \times \mathbb{T}$ has a unique solution $\mcV \in\widehat \mcD^{\gamma,\eta}$. Furthermore, the solution map $(A,B,C)\mapsto \mcV$ is locally Lipschitz as a map $\mcD^{\widetilde\gamma,0} \times \mcD^{\widetilde\gamma,0} \times \mcD^{\widetilde\gamma,0} \to \widehat \mcD^{\gamma,\eta}$ in the sense that given a second solution $\bar{\mcV}$ associated to $(\bar{A}, \bar{B}, \bar{C})$
	\begin{align*}
		\|\mcV-\bar \mcV\|_{\gamma,\eta;T_R}	\lesssim_R	\|A-\bar A\|_{\widetilde\gamma,0;T_R} + \|B-\bar B\|_{\widetilde\gamma,0;T_R} + \|C-\bar C\|_{\widetilde\gamma,0;T_R}.
	\end{align*}
\end{lemma}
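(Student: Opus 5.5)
We mirror the proof of Lemma~\ref{lem:conj_fixed_point}: define the map $\Phi_{A,B,C}(\mcV) := A\,\mathcal P_\gamma \mathbf 1_+ B\,\mathcal F_C(\mcV)$, where $\mathcal F_C(\mcV)$ is the bracketed expression. We then show that, for $T$ small depending only on $R$, $\Phi_{A,B,C}$ maps the unit ball of $\widehat\mcD^{\gamma,\eta}$ into itself and is a strict contraction there. The structure is the same as before. Each term of $\mathcal F_C(\mcV)$ is placed in some $\widehat\mcD^{\bar\gamma,\beta}_{\zeta}$ with $\beta>-2$, $\zeta>-2$ and $\beta\wedge\zeta>-2$ using \cite[Proposition~6.12, 6.13]{H0}. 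We then apply \cite[Theorem~7.1, Lemma~7.3]{H0} to obtain a bound in $\mcD^{\gamma,\eta}$ with a factor $T^{\delta}$, $\delta>0$, and \cite[Theorem~A.9]{CCHS22} for non-anticipativity. Finally, \cite[Lemma~A.8]{CCHS22} handles multiplication by $\mathbf 1_+A\in\widehat\mcD^{\widetilde\gamma,0}_0$ without loss of weight. The one new ingredient compared with KPZ is composition with smooth functions. Here we use \cite[Proposition~6.13]{H0} (and its local Lipschitz version from \cite{HP, BCCH}). Since $\mcV+C$ takes values in a sector of regularity $0$ with weight $\eta\wedge 0 = 0$, the elements $\Gamma(\mcV+C)$, $\sigma(\mcV+C)$, $g(\mcV+C)$, $h(\mcV+C)$, $\widetilde\Gamma^i(\mcV,C)$ all lie in $\mcD^{\gamma,0}_0$. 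Their norms are bounded by a constant depending on $R$ and $\|\mcV\|_{\gamma,\eta}$, and they depend locally Lipschitz on the data.

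We next carry out the exponent bookkeeping term by term, with $\gamma=\eps-\alpha$, $\eta=\alpha+2-\eps$, $\bar\gamma=\eps$, and $DC\in\mcD^{\widetilde\gamma-1,-1}$.
\begin{itemize}
	\item $\Gamma(\mcV+C)(D\mcV)^2$: the term $(D\mcV)^2$ has regularity $2\alpha+2$ and weight $2\eta-2=2\alpha+2-2\eps>-2$, since $\eps\le(2+\alpha)/2$. This is as in the KPZ case, and the Schauder gain is $T^{\eta/2}$ times a quadratic factor.
	\item $\widetilde\Gamma^0\,\mcV\,DC\,D\mcV$: here $\mcV DC$ has weight $\eta-1$, so the product has weight $2\eta-2$. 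The extra factor $\mcV$ is exactly what compensates the critical $DC$ weight $-1$.
	\item $\widetilde\Gamma^1\mcV^2(DC)^2$: the weight is $2\eta-2$.
	\item $g(\mcV+C)D\mcV$: the weight is $\eta-1$.
	\item $g(\mcV+C)DC$ and $h(\mcV+C)$: the weights are $-1$ and $0$, which are subcritical since $-1>-2$. They give factors $T^{(1-\eta)/2}$ or $T^{\eta/2}$ after integration, because the target weight is $\eta<1$. These are the source terms, with no smallness in $\mcV$.
	\item $\sigma(\mcV+C)\Xi$: regularity and weight $\alpha$, with gain $T^{(\alpha+2-\eta)/2}=T^{\eps/2}$.
\end{itemize}
For each term we also check that the regularity $\gamma$ of the product is at least $\bar\gamma$. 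For example, for $(D\mcV)^2$ this is $\min(\gamma-1+\alpha+1,\ldots)=\bar\gamma$, while the other products reach $\gamma+\alpha$ or better. We also check that the sector regularity exceeds the weight, so that the non-anticipative reconstruction in \cite[Appendix~A]{CCHS22} applies.

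Collecting these estimates yields the analogue of \eqref{eq:map-bound}:
\begin{align*}
	\|\Phi_{A,B,C}(\mcV)\|_{\gamma,\eta;T}\le C_R\,T^{\delta}\bigl(1+\|\mcV\|_{\gamma,\eta;T}\bigr)^{N}
\end{align*}
for some $\delta>0$. The difference bounds are analogous, with the same power $T^\delta$, and depend Lipschitz on $(A,B,C)$. From these, existence, uniqueness and Lipschitz dependence follow exactly as at the end of the proof of Lemma~\ref{lem:conj_fixed_point}.

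The main obstacle will be the bookkeeping: verifying, for every product, that the combined regularity and weight exponents satisfy the hypotheses of \cite[Prop.~6.12]{H0} and of the weighted Schauder estimate. This is delicate because the truncation level $\bar\gamma=\eps$ is small and because of the products of critical-weight terms $DC$ with the small factor $\mcV$. A further point is that local Lipschitz continuity of composition in $C$, and not only in $\mcV$, requires $C\in\mcD^{\widetilde\gamma,0}$ with $\widetilde\gamma>\gamma$.
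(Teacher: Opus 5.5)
There is a genuine gap, and it sits at exactly the point that makes this lemma harder than its KPZ counterpart. You attribute the inner products to \cite[Proposition~6.12]{H0}. That result gives the product weight $(\eta_1+\zeta_2)\wedge(\eta_2+\zeta_1)\wedge(\eta_1+\eta_2)$, where $\zeta_i$ are the sector regularities. Since $\mcV$ takes values in a sector of regularity $0$ and $DC\in\mcD_0^{\gamma-1,-1}$, it yields weight $\eta\wedge(-1)\wedge(\eta-1)=-1$ for $\mcV\,DC$, not $\eta-1$. The factor $\mcV$ buys nothing, because in Hairer's norm with $\eta>0$ the constant component of $\mcV$ is only required to be bounded, not $O(|t|^{\eta/2})$. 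Consequently $\widetilde\Gamma^0\mcV\,DC\,D\mcV$ only gets weight $\eta-2=\alpha-\eps$, which after integration leaves no positive power of $T$. Worse, $\widetilde\Gamma^1\mcV^2(DC)^2$ gets weight $-2$, which is outside the range of the Schauder estimate altogether. So your step ``each term is placed in some $\widehat\mcD^{\bar\gamma,\beta}$ with $\beta>-2$ using \cite[Proposition~6.12, 6.13]{H0}'' fails for precisely the two terms you single out. Your remark that the extra $\mcV$ compensates the critical weight of $DC$ is the right intuition, but Proposition~6.12 cannot implement it.

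What is missing is to use the additive product rule \cite[Lemma~A.8]{CCHS22} already for these inner products, not only for the final multiplication by $A$. This requires every factor to lie in a $\widehat\mcD$ space. Because the bracket is multiplied by $\mathbf 1_+B$, idempotence of $\mathbf 1_+$ lets you replace $DC$ and $D\mcV$ by $\mathbf 1_+DC$ and $\mathbf 1_+D\mcV$. Their weights $-1$ and $\eta-1=\alpha+1-\eps$ do not exceed their sector regularities $0$ and $\alpha+1$, so $\mathbf 1_+DC\in\widehat\mcD_0^{\gamma-1,-1}$ and $\mathbf 1_+D\mcV\in\widehat\mcD_{\alpha+1}^{\gamma-1,\eta-1}$, while $\mcV\in\widehat\mcD_0^{\gamma,\eta}$ because that is the fixed-point space. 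Lemma~A.8 then gives $\mathbf 1_+\mcV\,DC\,D\mcV\in\widehat\mcD_{\alpha+1}^{\bar\gamma,2\eta-2}$ and $\mathbf 1_+\mcV^2(DC)^2\in\widehat\mcD_0^{\gamma-1,2\eta-2}$. Multiplying afterwards by $\widetilde\Gamma^i(\mcV,C)$ and $\mathbf 1_+B$ via Proposition~6.12 preserves these exponents, and membership in $\widehat\mcD$ survives since each weight is at most the corresponding sector regularity. From there your remaining bookkeeping and contraction argument go through as you describe; this is the paper's route.

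A harmless slip: $h(\mcV+C)$ has weight $0$, so it integrates to weight $2$ and its gain is $T^{(2-\eta)/2}$ rather than $T^{\eta/2}$.
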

\begin{proof}
	For a solution $\mcV \in \widehat{\mcD}_0^{\gamma, \eta}$ of the fixed point problem, we have that 
	\medskip
	
	\begin{center}
	\renewcommand{\arraystretch}{1.25}
	\begin{tabular}{@{}lccc@{}}
		\toprule
		\text{Modelled distribution}
		& $\gamma_{\star}$
		& $\eta_{\star}$
		& \text{sector regularity}
		\\
		\midrule
		$A,B,C$
		& $\widetilde\gamma$
		& $0$
		& $0$
		\\
		$\mcV$
		& $\gamma=\eps-\alpha$
		& $\eta=\alpha+2-\eps$
		& $0$
		\\
		$DC$
		& $\widetilde\gamma-1$
		& $-1$
		& $0$
		\\
		$D\mcV$
		& $\gamma-1=\eps-\alpha-1$
		& $\eta-1=\alpha+1-\eps$
		& $\alpha+1$
		\\
		$C+\mcV$
		& $\gamma$
		& $0$
		& $0$
		\\
		$\Gamma(C+\mcV)$
		& $\gamma$
		& $0$
		& $0$
		\\
		$\widetilde{\Gamma}^{0}(\mcV,C),
		\ \widetilde{\Gamma}^{1}(\mcV,C)$
		& $\gamma$
		& $0$
		& $0$
		\\
		$\sigma(C+\mcV),\ g(C+\mcV),\ h(C+\mcV)$
		& $\gamma$
		& $0$
		& $0$
		\\
		$D(C+\mcV)$
		& $\gamma-1$
		& $-1$
		& $\alpha+1$
		\\
		$\Xi$
		& $\widetilde{\gamma}$
		& $\widetilde{\gamma}$
		& $\alpha$
		\\
		\bottomrule
	\end{tabular}
\end{center}
It follows that $(D \mcV)^2 \in \mcD_{2\alpha+2}^{\gamma + \alpha, 2 \eta - 2}$. Since $\tilde{\gamma} \ge \gamma$, we may also simplify by taking $DC \in \mcD_0^{\gamma - 1, -1}$. Using only \cite[Proposition 6.12]{H0}, we would get 
\begin{center}
	\renewcommand{\arraystretch}{1.25}
	\begin{tabular}{@{}lccc@{}}
		\toprule
		Term
		& $\gamma_{\star}$
		& $\eta_{\star}$
		& sector regularity
		\\
		\midrule
		$\Gamma(C+\mcV)(D\mcV)^2$
		& $\bar\gamma=\eps$
		& $2\eta-2=2\alpha+2-2\eps$
		& $2\alpha+2$
		\\
		$\widetilde{\Gamma}^{0}(\mcV,C)\mcV\,DC\,D\mcV$
		& $\bar\gamma=\eps$
		& $\eta-2=\alpha-\eps$
		& $\alpha+1$
		\\
		$\sigma(C+\mcV)\Xi$
		& $\bar\gamma=\eps$
		& $\alpha$
		& $\alpha$
		\\
		$\widetilde{\Gamma}^{1}(\mcV,C)\mcV^2(DC)^2$
		& $\gamma-1=\eps-\alpha-1$
		& $-2$
		& $0$
		\\
		$g(C+\mcV)D(C+\mcV)$
		& $\gamma-1=\eps-\alpha-1$
		& $-1$
		& $\alpha+1$
		\\
		$h(C+\mcV)$
		& $\gamma=\eps-\alpha$
		& $0$
		& $0$
		\\
		\bottomrule
	\end{tabular}
\end{center}
	We see that the entries in the second and fourth row are already problematic in the sense that even in the case $A = B = \mathbf{1}$, they would lead to a time $0$ weight for which the fixed-point problem would not close. However, using idempotence of $\mathbf{1}_+$, we can benefit from the fact that $\mcV \in \widehat{\mcD}_0^{\gamma, \eta}, \mathbf{1}_+ D\mcV \in \widehat{\mcD}_{\alpha + 1}^{\gamma - 1, \eta - 1}$ and $\mathbf{1}_+ DC \in \widehat{\mcD}_{0}^{\gamma - 1, -1}$ where in the latter two cases, we have used the fact that $\eta_\star$ is less than or equal to the regularity of the corresponding sector. Therefore, using \cite[Lemma A.8]{CCHS22} in place of \cite[Proposition 6.12]{H0}, yields
	\begin{align*}
		\mathbf{1}_+ \mcV DC D\mcV \in \widehat{\mcD}_{\alpha + 1}^{\bar{\gamma}, 2 \eta - 2}, \qquad \mathbf{1}_+ \mcV^2 (DC)^2 \in \widehat{\mcD}_0^{\gamma - 1, 2 \eta - 2}.
	\end{align*}
	Multiplying by $\widetilde{\Gamma}^i(\mcV, C)$ using \cite[Proposition 6.12]{H0} then improves the entries of the third columns to $2\eta - 2$ in each case. 

	Since multiplication by $\mathbf{1}_+ B$ preserves each of the exponents, we obtain
	\begin{center}
	\renewcommand{\arraystretch}{1.25}
	\begin{tabular}{@{}lccc@{}}
		\toprule
		Term
		& $\gamma_{\star}$
		& $\eta_{\star}$
		& sector regularity
		\\
		\midrule
		$\mathbf{1}_+ B \Gamma(C+\mcV)(D\mcV)^2$
		& $\bar\gamma=\eps$
		& $2\eta-2=2\alpha+2-2\eps$
		& $2\alpha+2$
		\\
		$\mathbf{1}_+ B \widetilde{\Gamma}^{0}(\mcV,C)\mcV\,DC\,D\mcV$
		& $\bar\gamma=\eps$
		& $2\eta-2=2\alpha + 2 -2\eps$
		& $\alpha+1$
		\\
		$\mathbf{1}_+ B \sigma(C+\mcV)\Xi$
		& $\bar\gamma=\eps$
		& $\alpha$
		& $\alpha$
		\\
		$ \mathbf{1}_+ B \widetilde{\Gamma}^{1}(\mcV,C)\mcV^2(DC)^2$
		& $\gamma-1=\eps-\alpha-1$
		& $2\eta-2=2\alpha + 2 - 2\eps$
		& $0$
		\\
		$\mathbf{1}_+ B g(C+\mcV)D(C+\mcV)$
		& $\gamma-1=\eps-\alpha-1$
		& $-1$
		& $\alpha+1$
		\\
		$\mathbf{1}_+ B h(C+\mcV)$
		& $\gamma=\eps-\alpha$
		& $0$
		& $0$
		\\
		\bottomrule
	\end{tabular}
\end{center}

We note that in each case, $\eta_\star$ is less than or equal to the regularity of the corresponding sector. Therefore, each of these terms lives in $\widehat{\mcD}^{\gamma_\star, \eta_\star}$ for the corresponding $\gamma_\star$ and $\eta_\star$. Integrating using \cite[Theorem A.9]{CCHS22}, we get that
\begin{align*}
	\mathcal P_\gamma\mathbf 1_+B\Bigl [ & \Gamma(\mcV + C) (D\mcV)^2 + 2 \widetilde{\Gamma}^0(\mcV, C) \mcV D C D \mcV + \sigma(\mcV + C) \Xi 
	\\
	& \qquad + \widetilde{\Gamma}^1(\mcV, C) \mcV^2 (DC)^2 + g(\mcV + C) D (\mcV + C) + h(\mcV + C) \Bigr ] \in \widehat{D}_0^{2 + \eps, \alpha + 2}.
\end{align*}
Having performed the power-counting, we now more carefully establish the norm estimates. Let $\mathcal N_{B,C}(\mcV)$ denote the expression obtained by multiplying
the terms inside the square brackets by $\mathbf 1_+B$. By the above we have that
\begin{align*}
	\mathcal N_{B,C}(\mcV)
	\in\widehat{\mcD}_{\alpha}^{\bar\gamma,\alpha}.
\end{align*}
Moreover, the corresponding multiplication and composition estimates imply
that, for every $M>0$, uniformly over $T\leq1$ and
$\|\mcV\|_{\gamma,\eta;T}\leq M$,
\begin{align*}
	\|\mathcal N_{B,C}(\mcV)\|_{\bar\gamma,\alpha;T}
	&\lesssim_{R,M}1,                             
	\\
	\|\mathcal N_{B,C}(\mcV)
		-\mathcal N_{\bar B,\bar C}(\bar{\mcV})\|_{\bar\gamma,\alpha;T}
	&\lesssim_{R,M}
	\Bigl(
		\|\mcV-\bar{\mcV}\|_{\gamma,\eta;T}
		+\|B-\bar B\|_{\widetilde\gamma,0;T}
		+\|C-\bar C\|_{\widetilde\gamma,0;T}
	\Bigr).                                                       
\end{align*}
Here and below, the implicit constants are allowed to depend on the fixed
model and on the nonlinearities.

Consequently,
\begin{align*}
	\|\mathcal P_\gamma\mathcal N_{B,C}(\mcV)\|_{\gamma,\eta;T}
	&\lesssim
	T^{\eps/2}
	\|\mathcal N_{B,C}(\mcV)\|_{\bar\gamma,\alpha;T},                \\
	\|\mathcal P_\gamma\mathcal N_{B,C}(\mcV)
		-\mathcal P_\gamma\mathcal N_{\bar B,\bar C}(\bar{\mcV})
		\|_{\gamma,\eta;T}
	&\lesssim_{R,M}
	T^{\eps/2}
	\Bigl(
		\|\mcV-\bar{\mcV}\|_{\gamma,\eta;T}
		+\|B-\bar B\|_{\widetilde\gamma,0;T}
		\\ &
		\qquad \qquad \qquad +\|C-\bar C\|_{\widetilde\gamma,0;T}
	\Bigr).                    
\end{align*}

Define
\begin{align*}
	\Phi_{A,B,C}(\mcV)
	=A\mathcal P_\gamma\mathcal N_{B,C}(\mcV).
\end{align*}
Multiplication by $\mathbf 1_+A$ and the preceding estimates give
\begin{align*}
	\|\Phi_{A,B,C}(\mcV)\|_{\gamma,\eta;T}
	&\lesssim_{R,M}T^{\eps/2},\\
	\|\Phi_{A,B,C}(\mcV)
		-\Phi_{\bar A,\bar B,\bar C}(\bar{\mcV})\|_{\gamma,\eta;T}
	&\lesssim_{R,M}T^{\eps/2}
	\Bigl(
		\|\mcV-\bar{\mcV}\|_{\gamma,\eta;T}
		+\|A-\bar A\|_{\widetilde\gamma,0;T}	
	\\ & \qquad \qquad \qquad 
	+\|B-\bar B\|_{\widetilde\gamma,0;T}
		+\|C-\bar C\|_{\widetilde\gamma,0;T}
	\Bigr).
\end{align*}
Taking $M=1$ and then choosing $T_R$ sufficiently small shows that
$\Phi_{A,B,C}$ maps the unit ball into itself and is a contraction so that the first part of the statement follows by the usual fixed-point approach. The
last estimate, followed by absorption of the term involving
$\|\mcV-\bar{\mcV}\|_{\gamma,\eta;T_R}$, gives the asserted local
Lipschitz estimate. Uniqueness without restricting to a ball of radius $1$ again follows similarly to the continuation argument in \cite[Theorem~7.8]{H0}.
\end{proof}

We next turn to controlling the deterministic profile.

\begin{lemma}\label{lem:gKPZ_PDE_control}
	Given $\psi \in L^\infty$, define $u^\psi$ to be the solution of
	\begin{align*}
		(\partial_t - \Delta) u^\psi = \Gamma(u^\psi) (\partial_x u^\psi)^2 
	\end{align*}
	with initial data $\psi$. Then $\psi \mapsto u^\psi$ is locally Lipschitz with values in $C^{\gamma, 0}$. In particular,
	\begin{align*}
		\mcE(\Phi(u^\psi)) &= \sum_{|k|_\fs < \widetilde{\gamma}} \frac{\partial^k e^{\Phi(u^\psi)}}{k!} X^k
		\\
		\mcE(-\Phi(u^\psi)) &= \sum_{|k|_\fs < \widetilde{\gamma}} \frac{\partial^k e^{-\Phi(u^\psi)}}{k!} X^k
		\\
		\mcU^\psi &= \sum_{|k|_\fs < \widetilde{\gamma}} \frac{\partial^k {u^\psi}}{k!} X^k
	\end{align*}
	are well-defined elements of $D^{\widetilde{\gamma}, 0}$ that depend in a locally Lipschitz fashion on $\psi \in L^\infty$.
\end{lemma}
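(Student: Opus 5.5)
The plan is to linearise the deterministic problem by the generalised Cole--Hopf transform. Set $Z^\psi := e^{\Phi(u^\psi)}$ with $\Phi' = \Gamma$, $\Phi(0)=0$. Then $(\partial_t - \Delta) u^\psi = \Gamma(u^\psi)(\partial_x u^\psi)^2$ is equivalent to
\begin{align*}
	(\partial_t - \Delta) Z^\psi = -\Gamma'(u^\psi)\,(\partial_x u^\psi)^2\, Z^\psi ,
\end{align*}
which is precisely the computation displayed before Lemma~\ref{lem:gKPZ_conj_fixed_point}. This is not a linear equation unless $\Gamma'\equiv 0$. So instead of solving it, I would work with the scalar map $F = e^{\Phi}$, which is strictly increasing and smooth. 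Then $U := F(u^\psi)$ satisfies
\begin{align*}
	(\partial_t - \Delta)U = -\frac{F''}{F'}(u^\psi)\,(\partial_x U)^2\cdot\frac{\Gamma'}{\Gamma^2}\cdots .
\end{align*}
Rather than chase such formulas, the cleanest route is classical. First, the maximum principle gives $\|u^\psi(t)\|_\infty \le \|\psi\|_\infty$ for all $t$, because the nonlinearity vanishes where $\partial_x u = 0$. This reduces to compact ranges of $\Gamma$ and its derivatives.

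Second, I need the gradient bound $\|\partial_x u^\psi(t)\|_\infty \lesssim_R t^{-1/2}$ for $t\le 1$. I would prove it with a Bernstein-type argument: apply the maximum principle to $t\,\phi(u)\,(\partial_x u)^2$ for a suitably chosen auxiliary function $\phi$, as in the classical treatment of $u_t = \Delta u + \Gamma(u)|\nabla u|^2$ with bounded data. Alternatively, one can transform by $\Phi$: $\Phi(u^\psi)$ solves $(\partial_t-\Delta)\Phi(u)=0$ exactly when $\Gamma'=0$. In general, setting $v=\Phi(u)$ yields $(\partial_t - \Delta)v = \Gamma(u)(\partial_x u)^2 \cdot (1 - 1) + \ldots$. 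More precisely, $(\partial_t-\Delta)\Phi(u) = \Phi'(u)\Gamma(u)(\partial_x u)^2 - \Phi''(u)(\partial_xu)^2 = (\Gamma^2-\Gamma')(\partial_xu)^2$. This is again quadratic, so the Bernstein route is the one I commit to.

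Higher derivatives then follow from parabolic Schauder estimates on dyadic time windows $[t/2,t]$ rescaled to unit size. By parabolic scaling, the equation is invariant and the rescaled data have bounded gradient. Bootstrapping gives $|\partial^k u^\psi(t)| \lesssim_R t^{-|k|_\fs/2}$ for $|k|_\fs\ge 1$. This is exactly membership of the Taylor jet in $C^{\widetilde\gamma,0}$ (weight $0$, with the sup bound covering $|k|=0$). For $\mcE(\pm\Phi(u^\psi))$ the same bounds follow by the chain rule, since compositions of smooth functions with jets preserve $\mcD^{\widetilde\gamma,0}$, as in \cite[Proposition 6.13]{H0}. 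Restricting to $|k|_\fs<\widetilde\gamma$ gives the elements of $D^{\widetilde\gamma,0}$ with the Taylor remainder bounds holding automatically from the derivative bounds.

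For local Lipschitz dependence, I take $\delta = u^{\psi}-u^{\bar\psi}$. It solves a linear equation
\begin{align*}
	(\partial_t - \Delta)\delta = b\,\partial_x\delta + c\,\delta ,
\end{align*}
with $b = \Gamma(u^{\psi})\partial_x(u^\psi+u^{\bar\psi})$ and $c = \bigl(\int_0^1\Gamma'\bigr)(\partial_x u^{\bar\psi})^2$. The bounds above give $|b|\lesssim t^{-1/2}$ and $|c|\lesssim t^{-1}$. The main obstacle is that these coefficients are scaling critical, so a naive Duhamel estimate does not close. I would try the integrating factor trick from the paper: multiply $\delta$ by $e^{\Phi(u^\psi)}$, or by an interpolated version, to kill the drift. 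Failing that, I would use the maximum principle directly. The drift term is harmless there, and the term $c\,\delta$ can be handled by comparison after noting that $c\ge -C t^{-1}$ alone is insufficient. Instead, I would use the comparison principle for the original equation (Lemma~\ref{lem:comparison}-type, valid since the equation is a scalar parabolic PDE). This gives $u^{\bar\psi}$ bracketed between solutions from $\psi\pm\|\psi-\bar\psi\|_\infty$, and hence $\|\delta\|_\infty$ is controlled. Here the sup-norm Lipschitz bound requires $u^{\psi+a}$ close to $u^\psi$, which again is the critical linearised problem. Finally, with $\|\delta\|_\infty$ in hand, the weighted higher-derivative bounds for $\delta$ follow by rescaled Schauder on dyadic windows as before.
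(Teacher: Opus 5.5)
Your proposal has a genuine gap: the local Lipschitz dependence is not proved, already at the level of $\|u^\psi-u^{\bar\psi}\|_\infty$. You correctly note that the linearised equation for $\delta$ has coefficients $b\sim t^{-1/2}$ and $c\sim t^{-1}$. The coefficient $c$ has indefinite sign, since $\Gamma'$ does. So neither Duhamel nor the maximum principle with Gronwall closes; the latter produces a factor $(t/t_0)^{C}$ that blows up as $t_0\to 0$. Your comparison fallback is circular, as you half-admit. Bracketing $u^{\bar\psi}$ between $u^{\psi\pm\epsilon}$ only reduces matters to showing $\|u^{\psi+\epsilon}-u^\psi\|_\infty\lesssim\epsilon$. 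For non-constant $\Gamma$ this does not follow from any shift invariance, and it is the same critical problem. Your weighted bounds for $\delta$ are derived from $\|\delta\|_\infty$ by rescaled Schauder estimates, so the whole Lipschitz assertion is left open. The gradient bound $\|\partial_x u^\psi(t)\|_\infty\lesssim_R t^{-1/2}$ is also only gestured at via an unspecified Bernstein function; it is plausible but, as below, unnecessary.

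The missing idea is that the equation is exactly linearisable. You tried $e^{\Phi(u)}$ and $\Phi(u)$, but the right object is one integration further: $J(r)=\int_0^r e^{\Phi(s)}\,ds$, the solution of $J''=\Gamma J'$ with $J(0)=0$, $J'(0)=1$. Your $Z^\psi$ is $J'(u^\psi)$. Then
\begin{align*}
(\partial_t-\Delta)J(u^\psi)=\bigl(J'(u^\psi)\Gamma(u^\psi)-J''(u^\psi)\bigr)(\partial_x u^\psi)^2=0,
\end{align*}
so $u^\psi=Q\bigl(e^{t\Delta}J(\psi)\bigr)$, where $Q=J^{-1}$ is smooth on $[J(-R),J(R)]$ with $R=\|\psi\|_\infty$; the heat flow never leaves this interval. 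This is the paper's proof. The heat-kernel bounds $\|\partial^k e^{t\Delta}J(\psi)\|_\infty\lesssim t^{-|k|_\fs/2}\|J(\psi)\|_\infty$, combined with composition by $Q$ as in \cite[Proposition 6.13]{H0}, give the $C^{\widetilde\gamma,0}$ bounds for $u^\psi$ and $e^{\pm\Phi(u^\psi)}$. Lipschitz dependence follows because $J(u^\psi)-J(u^{\bar\psi})=e^{t\Delta}\bigl(J(\psi)-J(\bar\psi)\bigr)$ and $Q$ is smooth. The same formula also gives existence for merely $L^\infty$ data, which your argument takes for granted. Your passing suggestion of an ``interpolated'' integrating factor is in fact the right repair of your own route. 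With $m=\int_0^1 e^{\Phi(u^{\bar\psi}+s\delta)}\,ds$ one has $m\delta=J(u^\psi)-J(u^{\bar\psi})$, which is a free heat flow. Since $m$ is bounded above and below in terms of $R$, this gives $\|\delta(t)\|_\infty\lesssim_R\|\psi-\bar\psi\|_\infty$. However, you did not identify this $m$ or verify that it works.
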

\begin{proof}
	Let $J$ solve $J'' = J' \Gamma$ with $J(0) = 0$ and $J'(0) = 1$. Explicitly, we have that
	\begin{align}\label{eq:a_ODE_solution}
		J(r) = \int_0^r \exp \Bigl (\int_0^s \Gamma(q) dq \Bigr ) ds = \int_0^r \exp(\Phi(s)) ds
	\end{align}
	In particular, this yields that $J'(r) > 0$ for all $r \in \mathbb{R}$. Therefore $J$ is strictly monotone and thus by the Inverse Function Theorem possesses a smooth inverse $Q:=J^{-1}$ on its image.
	The point of introducing this choice of $J$ is that
	\begin{align*}
		(\partial_t - \Delta) J(u^\psi) = J'(u^\psi)(\partial_t - \Delta)u^\psi - J''(u^\psi) (\partial_x u^\psi)^2 = 0
	\end{align*}
	with initial data $J(\psi)$. 

	Therefore, by standard estimates for the heat equation, we get that 
	$$\|\partial^k J(u^\psi) \|_\infty \lesssim t^{-|k|_\fs/2} \|J(\psi)\|_\infty \le t^{-|k|_\fs/2} \|\psi\|_\infty \|J'\|_{\infty, [-R,R]}.$$
	In particular, for every $\gamma > 0$, we get that
	\begin{align*}
		\|J(u^\psi)\|_{C^{\gamma, 0}} \lesssim_R \|\psi\|_\infty.
	\end{align*}
	Since $Q(0) = 0$, by \cite[Proposition 6.13]{H0} and the standard isomorphism between $C^{\gamma,0}$ and $D^{\gamma,0}$, we then have that there exists a polynomial $p$ such that
	\begin{align}\label{eq:a_ODE_solution_est}
		\|u^\psi\|_{C^\gamma, 0} \lesssim_R p(\|J(\psi)\|_\infty) \|J(u^\psi)\|_{C^\gamma, 0} \lesssim_R \tilde{p}(\|\psi\|_\infty)
	\end{align}
	for some other polynomial $\tilde{p}$. Note that here we have used the fact that if the derivative of $Q$ are bounded on $[J(-R), J(R)]$ with $R = \|\psi\|_\infty$.
	
	For the local Lipschitz dependency, we proceed analogously. That is, we note that for two initial data $\psi$ and $\phi$, the function $w:=J(u^\psi)-J(u^\phi)$ solves the homogeneous heat equation with initial datum $J(\psi)-J(\phi)$.
	By a similar argument as for \eqref{eq:a_ODE_solution_est}, $w$ is controlled by $J(\psi)-J(\phi)$.
	Since $J$ is smooth, this updates to control of $w$ in terms of $\psi-\phi$, and since $Q=J^{-1}$ is smooth, the same control holds for $u^\psi-u^\phi=Q(J(u^\psi))-Q(J(u^\phi))$.
\end{proof}
\begin{remark}
Since the range of $J$ is generally not the entire real line, the inverse $Q$ is defined only on an interval.
This restriction is not problematic, since we only ever need to make sense of $Q$ on the range of $J$.
In fact, we will only apply $Q$ to functions and modelled distributions that arise as a transform under $J$.
More precisely, the only modelled distributions we will apply $Q$ to are of the form $J(\mcU^\psi)$, where $\mcU^\psi$ is the lift of $u^\psi$.
Therefore, the composition formula in Theorem 4.16 of \cite{H0} implies that we only need to know $Q$ on the image of $J$.
\end{remark}

\begin{lemma}\label{lem:gKPZ_identification}
	Suppose that $\mcV$ is the solution of the fixed-point problem of Lemma~\ref{lem:gKPZ_conj_fixed_point} with respect to a model $Z = (\Pi, \Gamma) \in \mathcal{M}^s$ with $A = \mcE(-\Phi(u^\psi))$, $B = \mcE(\Phi(u^\psi))$ and $C = \mcU^\psi$. If $Z$ corresponds to a smooth noise assignment and $v = \mathcal{R}\mcV$, then we have that
	\begin{align*}
	(\partial_t - \Delta) v &= \Gamma(v + u^\psi) (\partial_x v)^2 + 2 \Gamma(v + u^\psi) \partial_x u^\psi \partial_x v + \Gamma'(u^\psi) v (\partial_x u^\psi)^2
	\\
	& \qquad + \sigma(v + u^\psi) \xi + \mathcal{N}_\psi(v) 
	 + \mathfrak{C}(v+ u^\psi)
	\end{align*}
	with $0$ initial data where $\mathfrak{C}(v+u^\psi)$ is the usual counterterm for the gKPZ equation driven by the model $Z$.
	
	In particular, it follows that for all $Z \in \mathcal{M}^s$, $u = v + u^\psi$ is the solution of gKPZ with respect to $Z$. 
\end{lemma}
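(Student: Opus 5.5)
We follow the proof of Lemma~\ref{lem:identification}, now for the general nonlinearities. We first treat smooth symmetric models. The central claim to establish is
\begin{align*}
	\mathcal{R}\Bigl[\mcE(\Phi(u^\psi)) \mathcal{F}(\mcV)\Bigr] = e^{\Phi(u^\psi)} \Bigl[\Gamma(v+u^\psi)(\partial_x v)^2 + 2(\Gamma(v+u^\psi)-\Gamma(u^\psi))\partial_x u^\psi \partial_x v + \sigma(v+u^\psi)\xi + \mathcal{N}_\psi(v) + \mathfrak{C}(v+u^\psi)\Bigr].
\end{align*}
Here $\mathcal{F}(\mcV)$ denotes the bracket in the fixed-point problem of Lemma~\ref{lem:gKPZ_conj_fixed_point}, and $\mathfrak{C}$ is the counterterm of the gKPZ equation for $U = \mcV+\mcU^\psi$.

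The structural observation is as follows. The bracket $\mathcal{F}(\mcV)$ is algebraically identical, as a polynomial expression in the modelled distributions, to
\begin{align*}
	\Gamma(U)(DU)^2 + g(U)DU + h(U) + \sigma(U)\Xi - \Gamma(\mcU^\psi)(D\mcU^\psi)^2 - 2\Gamma(\mcU^\psi)D\mcU^\psi D\mcV - \Gamma'(\mcU^\psi)\mcV(D\mcU^\psi)^2.
\end{align*}
This identity is checked directly from the definitions of $\hat\Gamma^0$ and $\hat\Gamma^1$, which are exact Taylor remainders. The subtracted terms are products of $\mcV$ or $D\mcV$ with polynomial-sector objects. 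The map $\mathcal{R}$ is multiplicative on products with at most one non-polynomial factor that carries no renormalisation, so applied to these terms it gives the corresponding classical expressions.

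The main task is to compare $\mathcal{R}[\mcE(\Phi(u^\psi))(\Gamma(U)(DU)^2+\dots)]$ with $e^{\Phi(u^\psi)}$ times the renormalised gKPZ right-hand side. We do this by expanding $\mcV$ from the fixed point to sufficient order and comparing its jet with that of the standard gKPZ solution $\mathcal{U}$, exactly as was done for the trees in Lemma~\ref{lem:identification}. There are two differences. First, multiplication by $A=\mcE(-\Phi(u^\psi))$ and by $B$ produces extra trees carrying polynomial decorations $X$. Second, the coefficient functions (Gubinelli derivatives) are shifted by $\partial_x u^\psi$-dependent terms. The extra trees arise from the integrating factor: terms such as $X\mathcal{I}(\cdot)$ versus $\mathcal{I}(X\cdot)$, and trees with one extra spatial derivative or spatial polynomial. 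Each of these has odd $e+k_x$ relative to the trees they correct, so by Definition~\ref{def:symmetric_model} their BPHZ constants vanish. Any nontrivial subcontraction ends with a factor $\Pi_x X(x)=0$, as before.

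The surviving renormalisation is therefore that of the standard jet, and it gives the usual local counterterm $\mathfrak{C}(U)$. This counterterm is a function of $U$ alone, with no $\partial_x U$-odd terms, again by symmetry. Multiplying by the polynomial-sector $B$ simply factors $e^{\Phi(u^\psi)}$ out of every counterterm. This tree bookkeeping is the main obstacle. It is cleanest to argue abstractly rather than tree by tree: the renormalised equation is determined by the preparation map and the coherence/algebraic formula of \cite{BCCH}. Multiplication by a polynomial-sector modelled distribution and the reexpansion around $\mcU^\psi$ change only coefficients and introduce trees with an odd number of spatial derivatives or decorations, and symmetry kills all of these.

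Given the claim, we have $(\partial_t-\Delta)(e^{\Phi(u^\psi)}v) = e^{\Phi(u^\psi)}[\cdots]$. Recall that $(\partial_t-\Delta)e^{\Phi(u^\psi)} = -e^{\Phi(u^\psi)}\Gamma'(u^\psi)(\partial_x u^\psi)^2$ and $\partial_x e^{\Phi(u^\psi)} = e^{\Phi(u^\psi)}\Gamma(u^\psi)\partial_x u^\psi$. We expand the product rule and divide by $e^{\Phi(u^\psi)}$. This reverses the heuristic computation preceding Lemma~\ref{lem:gKPZ_conj_fixed_point} and yields the displayed equation for $v$. The initial condition $v(0)=0$ holds because $\mcV\in\widehat\mcD^{\gamma,\eta}$ with $\eta>0$. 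Adding the equation for $u^\psi$ shows that $u=v+u^\psi$ solves the renormalised gKPZ equation with initial data $\psi$. By classical uniqueness for smooth noise, $u$ coincides with $\mathcal{R}$ of the standard gKPZ fixed point.

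For general $Z\in\mathcal{M}^s$, we take smooth symmetric $Z_n\to Z$. Both sides converge: the left by the Lipschitz dependence on the model in Lemma~\ref{lem:gKPZ_conj_fixed_point}, run with the model as an additional argument, and the right by the continuity of the standard solution map of \cite{BCCH}. Equality therefore passes to the limit.
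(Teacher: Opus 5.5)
Your outer frame is sound. The rewriting of the bracket via the exact Taylor remainders, the treatment of the subtracted terms, the product-rule computation with $e^{\Phi(u^\psi)}$, the initial condition and the approximation step are all fine. The gap is the central claim: it is the entire content of the lemma, and you assert it rather than prove it. Neither route you offer works as stated.

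\begin{itemize}
\item A tree-by-tree comparison as in Lemma~\ref{lem:identification} is not feasible in the full subcritical regime, where the number of relevant trees is unbounded as $\alpha\downarrow-2$.
\item The appeal to the coherence/algebraic formula of \cite{BCCH} does not apply to the $\mathcal{V}$-problem. That theory identifies $\mathcal{R}$ of the nonlinearity along fixed points $U=\mathcal{P}\mathbf{1}_+F(U,DU,\Xi)+\dots$, with $F$ a local function of the unknowns and their derivatives only. Here, by contrast, the unknown is multiplied by $A$ \emph{after} integration, and $B$, $C$ are external space-time dependent inputs. Already for KPZ the jet contains the foreign trees $\mathcal{I}(X\Xi)$ and $X\mathcal{I}(\Xi)$. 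So no formula $\sum_\tau\Upsilon[\tau]\ell(\tau)/S(\tau)$ is available for the jet of $\mathcal{V}$ as it stands.
\item Your parity bookkeeping is also inaccurate. Trees generated by the $h$-term, by $\widetilde\Gamma^1(\mathcal{V},C)\mathcal{V}^2(DC)^2$, or by two $g$-vertices can have even $e+k_x$. They are harmless only because of the power counting $|\tau|_\fs=-2+n(\alpha+2)+(2m-e)+|k|_\fs$. Together with symmetry, this leaves only trees in which every noise-free vertex carries exactly two derivative edges and no polynomial decoration.
\end{itemize}

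The missing idea is the reformulation of Section~\ref{sec:gKPZ_identification}, which yields a problem to which \cite{BCCH} does apply. Take $J''=J'\Gamma$, $Q=J^{-1}$, and let $\mathcal{Y}=\mathcal{G}J(\psi)$ be the Taylor lift of the heat flow of $J(\psi)$. Then $C=Q(\mathcal{Y})$, $A=a(\mathcal{Y})$ and $B=b(\mathcal{Y})$ with $b=J'\circ Q=1/a$, so the external coefficients become functions of a solution component. Setting $\mathcal{W}=b(\mathcal{Y})\mathcal{V}$ removes the outer factor $A$. The pair $(\mathcal{Y},\mathcal{W})$ then solves a genuine BCCH system, so $w$ satisfies a renormalised equation with counterterm $\mathfrak{C}=\sum_\tau\Upsilon_w[\tau]\ell(\tau)/S(\tau)$. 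Undoing the transformation gives gKPZ with counterterm $a(y)\mathfrak{C}$.

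Your sentence ``multiplying by $B$ simply factors $e^{\Phi(u^\psi)}$ out of every counterterm'' then actually requires two things. First, the reduction just described, which also eliminates all $\mathcal{Y}$-edges and every $D\mathcal{Y}$-dependent vertex. Second, the identities $\partial_w^kF_W^\Xi=b^{1-k}\sigma^{(k)}(u)$ and $\partial_w^j\partial_{w'}^2F_W^0=b^{-1-j}\,2\Gamma^{(j)}(u)$, which by induction on the tree give $\Upsilon_u[\tau]=b^{-1}\Upsilon_w[\tau]$. This factorisation rests on the specific homogeneity of the transformed nonlinearity in $(w,\partial_x w)$, not merely on $B$ being polynomial-sector valued.
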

\begin{proof}
	As in Lemma~\ref{lem:identification}, it suffices to prove the claim regarding smooth models. Since the proof is computationally involved, we defer the details to Section~\ref{sec:gKPZ_identification}. In particular, this result follows from the combination of Lemma~\ref{lem:renormalised_eq} and Lemma~\ref{lem:Upsilon}.
\end{proof}

\begin{lemma}\label{lem:gKPZ_blow_up_alt}
	Suppose that $\theta\in(0,\eta)$ and fix parameters $R,M>0$. Then there exists $r\in(0,\frac12]$ and a constant $C<\infty$ that depends only on the fixed parameters such that the following holds.

	Let $Z\in\mathcal M^s$ have model norm at most $M$ on
	$[-1,T+2]\times\mathbb T$, and suppose that $\mcU^{Z,\psi}$
	solves the standard abstract fixed-point problem associated to the gKPZ equation on $(0,T)\times\mathbb T$ for some $T > 0$ with respect to the model $Z$ with initial data $\psi \in C^\theta$.
	Set $u^{Z,\psi}=\mathcal R^Z \mcU^{Z,\psi}$.

	If $0<s<T$ and
	\begin{align*}
		\|u^{Z,\psi}(s,\cdot)\|_{L^\infty}\leq R,
	\end{align*}
	then
	\begin{align*}
		\|u^{Z,\psi}(t,\cdot)\|_{C^\theta}\leq C
	\end{align*}
	for every $t<T$ satisfying $s+r\leq t\leq s+2r$.
\end{lemma}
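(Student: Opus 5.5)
The plan is to mirror the proof of Lemma~\ref{lem:blow_up_alt}, replacing the KPZ ingredients by their gKPZ counterparts. The first step is a uniform auxiliary estimate. By Lemma~\ref{lem:gKPZ_PDE_control}, there is a constant $C_R$ such that
\begin{align*}
	\|\mcE(-\Phi(u^\varphi))\|_{\widetilde\gamma,0;1}+\|\mcE(\Phi(u^\varphi))\|_{\widetilde\gamma,0;1}+\|\mcU^\varphi\|_{\widetilde\gamma,0;1}\le C_R
\end{align*}
uniformly over $\|\varphi\|_{L^\infty}\le R+1$. We then apply Lemma~\ref{lem:gKPZ_conj_fixed_point} with $A=\mcE(-\Phi(u^\varphi))$, $B=\mcE(\Phi(u^\varphi))$ and $C=\mcU^\varphi$. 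Its proof shows that $T_R$ can be chosen uniformly over models of norm at most $M+1$ on $[-1,2]\times\mathbb T$, since the implicit constants depend on the model only through its norm. This gives $r\in(0,\frac12]$ and a solution $\mcV$ on $(0,2r]$ with $\|\mcV\|_{\gamma,\eta;2r}\le 1$.

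The second step converts this into a $C^\theta$ bound. On $[r,2r]$ the weighted norm controls the unweighted $\mcD^\gamma$ norm, with a constant depending on $r$. The smallest non-polynomial degree in the jet of $\mcV$ is $\alpha+2\ge\eta>\theta$, so the reconstruction $\mathcal R\mcV(u,\cdot)$ is bounded in $C^\theta$ for $u\in[r,2r]$. Lemma~\ref{lem:gKPZ_PDE_control} gives $\|u^\varphi(u,\cdot)\|_{C^\theta}\lesssim_R 1+r^{-\theta/2}$ on the same interval. Hence $u^\varphi+\mathcal R\mcV$ is bounded in $C^\theta$ on $[r,2r]$, uniformly in $\varphi$ and the model.

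The third step transfers the bound to the true solution by approximation.
\begin{enumerate}
	\item Choose smooth symmetric models $Z_n\to Z$ and let $u_n$ be the corresponding classical renormalised gKPZ solutions from $\psi$.
	\item By the continuity of the regularity-structures solution map up to the $C^\theta$ explosion time \cite{BCCH}, $u_n\to u^{Z,\psi}$ locally uniformly on $(0,T)$. In particular $\varphi_n:=u_n(s,\cdot)$ satisfies $\|\varphi_n\|_{L^\infty}\le R+1$ for $n$ large.
	\item The time-shifted models $\tau_sZ_n$ have norm at most $M+1$ on $[-1,2]\times\mathbb T$ for $n$ large.
	\item Build $\mcV_n$ with respect to $\tau_sZ_n$ and initial data $\varphi_n$. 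By Lemma~\ref{lem:gKPZ_identification}, $u^{\varphi_n}+\mathcal R^{\tau_sZ_n}\mcV_n$ solves the classical renormalised gKPZ equation from $\varphi_n$.
	\item The counterterm $\mathfrak C$ is a smooth local function of the solution and is invariant under time shift, since the renormalisation constants of $\tau_sZ_n$ and $Z_n$ coincide. Classical uniqueness for smooth data and smooth noise then gives $u_n(s+u,\cdot)=u^{\varphi_n}(u,\cdot)+\mathcal R^{\tau_sZ_n}\mcV_n(u,\cdot)$.
	\item For $t<T$ with $s+r\le t\le s+2r$ this yields $\|u_n(t,\cdot)\|_{C^\theta}\le C$. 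Lower semicontinuity of the H\"older norm under uniform convergence then gives the claim for $u^{Z,\psi}$.
\end{enumerate}

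The main obstacle is the identification step, because it depends on Lemma~\ref{lem:gKPZ_identification}. In the present argument, the delicate points are different. First, $u^{\varphi_n}$ starts from $\varphi_n$, which is smooth but only uniformly bounded in $L^\infty$ and not in $C^\theta$. Second, $T_R$ and the bound on $\mcV$ must be uniform in the model and in $\varphi$. I would verify both by re-reading the constants in the proof of Lemma~\ref{lem:gKPZ_conj_fixed_point}: they depend only on $R$ and on the model norm, as in the KPZ case. A third point is ensuring that $u_n$ exists up to time $t$ for $n$ large. This follows from the local uniform convergence on compact subintervals of $(0,T)$.
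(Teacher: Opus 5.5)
Your proposal is correct and follows essentially the same route as the paper's proof. Both arguments obtain a uniform bound $\|\mathcal{V}\|_{\gamma,\eta;2r}\le 1$ from Lemma~\ref{lem:gKPZ_conj_fixed_point} and Lemma~\ref{lem:gKPZ_PDE_control} over $\|\varphi\|_{L^\infty}\le R+1$ and models of norm at most $M+1$, convert it into a $C^\theta$ bound on $[r,2r]$ using $\theta<\alpha+2$, and transfer it to $u^{Z,\psi}$ via smooth symmetric approximations, time-shifted models, Lemma~\ref{lem:gKPZ_identification}, classical uniqueness and lower semicontinuity of the H\"older norm. Your explicit remark that the counterterm is unchanged under $\tau_s$ is a detail the paper leaves implicit.
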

\begin{proof}
	By Lemma~\ref{lem:gKPZ_conj_fixed_point} and its proof along with Lemma~\ref{lem:gKPZ_PDE_control}, we may choose $r\in(0,\frac12]$ such that the auxiliary fixed-point problem with coefficients
	\begin{align*}
		A=\mathcal E(-\Phi(u^\varphi)),
		\qquad
		B=\mathcal E(\Phi(u^\varphi)),
		\qquad
		C=\mcU^\varphi
	\end{align*}
	has a solution on $(0,2r]$ satisfying
	\begin{align*}
		\|\mcV\|_{\gamma,\eta;2r}\leq 1,
	\end{align*}
	uniformly over $\|\varphi\|_\infty \le R + 1$ and models of norm at most $M+1$ on $[-1,2]\times\mathbb T$.

	On $[r,2r]\times\mathbb T$, this weighted bound implies a uniform bound in $\mathcal D^\gamma$.
	Since $\theta<\alpha+2$ which is the smallest non-polynomial degree appearing in the jet for $\mcV$, we have that
	\begin{align*}
		\sup_{s\in[r,2r]}
		\|\mathcal R\mcV(s,\cdot)\|_{C^\theta}
		\leq C_{M,r}.
	\end{align*}
	By Lemma~\ref{lem:gKPZ_PDE_control}, we also have that
	\begin{align*}
		\sup_{s\in[r,2r]}
		\|u^\varphi(s,\cdot)\|_{C^\theta}
		\leq C_R(1+r^{-\theta/2}).
	\end{align*}
	Therefore
	\begin{align}
		\sup_{s\in[r,2r]}
		\|u^\varphi(s,\cdot)+\mathcal R\mcV(s,\cdot)\|_{C^\theta}
		\leq C,
		\label{eq:gKPZ_auxiliary_delayed_bound}
	\end{align}
	where $C$ is uniform over the initial conditions and models under consideration.

	We now use these auxiliary estimates to control the original solution. Fix $s$ as in the statement and choose smooth symmetric models $Z_n\to Z$.
	We let $\bar \mcU_n= \bar \mcU^{Z_n,\psi}$ be the corresponding solutions of the standard fixed-point problem associated to gKPZ. We also write
	\begin{align*}
		u_n=\mathcal R^{Z_n}\bar \mcU_n,
		\qquad
		\varphi_n=u_n(s,\cdot).
	\end{align*}
	The standard regularity structure solution theory implies that for sufficiently large $n$, $\bar \mcU_n$ exists on every fixed compact time interval prior to time $T$ , and that $u_n\to u^{Z,\psi}$ locally uniformly on positive times (see \cite[Theorem~7.8, Proposition~7.11 and Corollary~7.12]{H0}). In particular,
	\begin{align*}
		\varphi_n\longrightarrow u^{Z,\psi}(s,\cdot)
		\quad\text{in }L^\infty,
	\end{align*}
	so $\|\varphi_n\|_{L^\infty}\leq R+1$ for all sufficiently large $n$.

	Let $\tau_s Z_n$ denote the model obtained by shifting the time origin to $s$. For $n$ sufficiently large, the assumed model bound and the convergence $Z_n\to Z$ give a bound of $M+1$ for these shifted models on $[-1,2]\times\mathbb T$.  We then construct $\mcV_n$ on $(0,2r]$ as above with respect to the model $\tau_s Z_n$ and the choices $A = \mathcal E(-\Phi(u^{\varphi_n}))$, $B = \mathcal E(\Phi(u^{\varphi_n}))$ and $C = \mcU^{\varphi_n}$. By Lemma~\ref{lem:gKPZ_identification}
	\begin{align*}
		u^{\varphi_n}(\bar{s},\cdot)
		+\mathcal R^{\tau_s Z_n}\mcV_n(\bar{s},\cdot)
	\end{align*}
	solves the same classical renormalised gKPZ equation as
	$u_n(s+\bar s,\cdot)$, with the same initial condition $\varphi_n$.
	Classical uniqueness therefore gives
	\begin{align*}
		u_n(s+\bar s,\cdot)
		=
		u^{\varphi_n}(\bar s,\cdot)
		+\mathcal R^{\tau_s Z_n}\mcV_n(\bar s,\cdot)
	\end{align*}
	throughout their common interval of existence.

	We then fix $t<T$ such that $s+r\leq t\leq s+2r$. For all sufficiently large $n$, the original approximating
	solution exists up to time $t$, so
	\eqref{eq:gKPZ_auxiliary_delayed_bound} yields
	\begin{align*}
		\|u_n(t,\cdot)\|_{C^\theta}\leq C.
	\end{align*}
	Since $u_n(t,\cdot)\to u^{Z,\psi}(t,\cdot)$ uniformly and
	$0<\theta<1$, lower semicontinuity of the H\"older norm gives
	\begin{align*}
		\|u^{Z,\psi}(t,\cdot)\|_{C^\theta}
		\leq
		\liminf_{n\to\infty}\|u_n(t,\cdot)\|_{C^\theta}
		\leq C.
	\end{align*}
	The constant is independent of $s$ and $t$, which proves
	the claim.
\end{proof}

\begin{lemma}\label{lem:gKPZ_constant_initial_data}
	Fix an admissible model $Z$.
	There exists $T>0$, independent of $a\in\mathbb R$, such that
	the fixed-point problem
	\begin{align}\label{eq:gKPZ_orig_fixed_point}
		\mcU^{Z,a}
		=
		\mathcal P_\gamma^Z\mathbf1_+
		\bigl[ \Gamma(\mcU^{Z,a}) (D \mcU^{Z,a})^2 + g(\mcU^{Z,a}) D \mcU^{Z,a} + h(\mcU^{Z,a}) + \sigma(\mcU^{Z,a}) \Xi \bigr]
		+\mathcal G a
	\end{align}
	has a unique solution in $\mcD^{\gamma,\eta}$ on
	$(0,T]\times\mathbb T$.
	Moreover,
	\begin{align*}
		\mcU^{Z,a}=a\mathbf1+\widehat{\mcU}^{Z,a},
	\end{align*}
	where $\widehat{\mcU}^{Z,a}$ solves
	\begin{align*}
		\widehat \mcU^{Z,a}
		=
		\mathcal P_\gamma^Z\mathbf1_+
		\bigl[ \Gamma_a(\widehat \mcU^{Z,a}) (D \widehat \mcU^{Z,a})^2 + g_a(\widehat \mcU^{Z,a}) D \widehat \mcU^{Z,a} + h_a(\widehat \mcU^{Z,a}) + \sigma_a(\widehat \mcU^{Z,a}) \Xi \bigr]
	\end{align*}
	where given a function $f: \mathbb{R} \to \mathbb{R}$, we have set $f_a(x) = f(x + a)$. 
	In particular, for every $\delta > 0$, there exists a $T_{\delta} > 0$ such that 
	\begin{align*}
		\|\mcU^{Z,a}\|_{\gamma, \eta; T_\delta} \le {|a| + \delta}.
	\end{align*}
	This $T_\delta$ can be chosen locally uniformly over the space of models and independently of $a$.
\end{lemma}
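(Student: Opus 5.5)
The plan is to mirror the proof of Lemma~\ref{lem:constant_initial_data}. The key structural fact is that the shifted nonlinearities $\Gamma_a, g_a, h_a, \sigma_a$ satisfy exactly the same bounds as the originals. Indeed, since $\Gamma, g, h, \sigma \in C_b^N(\mathbb{R})$, we have $\|f_a\|_{C^N_b} = \|f\|_{C^N_b}$ for every $a\in\mathbb{R}$ and $f \in \{\Gamma,g,h,\sigma\}$. The implicit constants in the composition estimates of \cite[Proposition~6.13]{H0} (and its difference version) depend only on these $C_b^N$ norms, and not on the location of the values of the modelled distribution.

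First, I would solve the problem for $\widehat{\mcU}^{Z,a}$ with vanishing initial data. I would use Lemma~\ref{lem:gKPZ_conj_fixed_point} with $A=B=\mathbf{1}$ and $C=0$, and with the nonlinearities replaced by their shifts. One can equally argue directly with the standard fixed-point estimates of \cite[Theorem~7.8]{H0} and \cite[Lemma~A.8, Theorem~A.9]{CCHS22}. For $C=0$ the terms involving $DC$ vanish, so only the standard gKPZ terms remain. The proof of Lemma~\ref{lem:gKPZ_conj_fixed_point} gives the bound $\|\Phi(\mcV)\|_{\gamma,\eta;T}\lesssim T^{\eps/2}$, together with the contraction bound, on the unit ball. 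The implicit constants depend only on the model norm and on the $C_b^N$ norms of the nonlinearities, hence uniformly on $a$. This yields a time $T$ independent of $a$ and a unique solution $\widehat{\mcU}^{Z,a}\in\widehat{\mcD}^{\gamma,\eta}$. The same bound, $\|\widehat{\mcU}^{Z,a}\|_{\gamma,\eta;T}\lesssim T^{\eps/2}$, gives for each $\delta>0$ a time $T_\delta$, uniform in $a$ and locally uniform in the model, with $\|\widehat{\mcU}^{Z,a}\|_{\gamma,\eta;T_\delta}\le\delta$.

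Second, I would show that $a\mathbf{1}+\widehat{\mcU}^{Z,a}$ solves \eqref{eq:gKPZ_orig_fixed_point}. This needs three observations:
\begin{itemize}
\item $\mathcal{G}a=a\mathbf{1}$ on positive times, since the heat semigroup preserves constants.
\item $D(a\mathbf{1}+\widehat{\mcU})=D\widehat{\mcU}$.
\item Composition commutes with shifts by constants at the level of modelled distributions: $f(a\mathbf{1}+\mcW)=f_a(\mcW)$. This holds because the lifted composition is given by Taylor expanding $f$ around the coefficient of $\mathbf{1}$, and shifting that coefficient by $a$ is the same as Taylor expanding $f_a$.
\end{itemize}
Substituting these into \eqref{eq:gKPZ_orig_fixed_point} reproduces exactly the equation for $\widehat{\mcU}^{Z,a}$. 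For uniqueness, suppose $\mcU\in\mcD^{\gamma,\eta}$ solves \eqref{eq:gKPZ_orig_fixed_point}. Then, by the same identities, $\mcU-a\mathbf{1}$ solves the shifted problem with zero initial data, and uniqueness in $\mcD^{\gamma,\eta}$ for that problem follows from the continuation argument of \cite[Theorem~7.8]{H0}.

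Finally, the norm bound follows from the triangle inequality:
\begin{align*}
\|\mcU^{Z,a}\|_{\gamma,\eta;T_\delta}\le\|a\mathbf{1}\|_{\gamma,\eta;T_\delta}+\delta=|a|+\delta,
\end{align*}
where we use that the constant $a\mathbf{1}$ has norm $|a|$. Since $\eta>0$, I would check that the weighted norm of a constant is indeed $|a|$ once the conventions are fixed; if not, the statement should be read with that normalisation.

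The main point requiring care is the uniformity in $a$ in the first step. The composition estimates of \cite[Proposition~6.13]{H0} are usually stated with constants depending on bounds for $f$ on the range of the relevant modelled distribution. One must therefore check that the global $C_b^N$ bounds make these constants independent of $a$, and that the difference estimates needed for the contraction likewise depend only on $\|f\|_{C_b^{N}}$.
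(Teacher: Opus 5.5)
Your proposal is correct and follows essentially the same route as the paper. You solve the shifted problem with vanishing initial data via Lemma~\ref{lem:gKPZ_conj_fixed_point}, taking $A=B=\mathbf{1}$, $C=0$ and nonlinearities $f_a$, get uniformity in $a$ from shift-invariance of the $C^N_b$ norms, add back $a\mathbf{1}=\mathcal{G}a$, and obtain uniqueness by subtracting $a\mathbf{1}$. You also check two details the paper leaves implicit: that $f(a\mathbf{1}+\mcV)=f_a(\mcV)$ for the lifted composition, and that $\|a\mathbf{1}\|_{\gamma,\eta}=|a|$, which holds because $\eta>0$ makes the weight on the $\mathbf{1}$-component trivial and constants have vanishing increments.
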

\begin{proof}
The standard solution theory for gKPZ via regularity structures, or in the setting of this note an application of Lemma~\ref{lem:gKPZ_conj_fixed_point} with $A=B=\mathbf{1}$, $C = 0$ and each function $f \in \{\Gamma, g, h, \sigma\}$ replaced with $f_a$, yields a unique solution $\widehat{\mcU}^{Z,a}\in\widehat{\mcD}^{\gamma,\eta}$ to the fixed-point problem with vanishing initial data on a sufficiently short interval $(0,T]$. Furthermore, this approach yields the existence of $T_\delta$ such that 
	\begin{align*}
	\|\widehat{\mcU}^{Z,a}\|_{\gamma, \eta; T_\delta} \le \delta
	\end{align*}
	where $T_\delta$ can be chosen locally uniformly over the driving model and over $a$. The fact that $T_\delta$ can be chosen uniformly in $a$ follows from the fact that the constants in the solution theory exhibited in e.g. Lemma~\ref{lem:gKPZ_conj_fixed_point} depend only on $C^k$-norms of the functions $f$ which are invariant under replacing $f$ with $f_a$.

	Since the heat semigroup preserves constants, its Taylor lift satisfies $\mathcal G a=a\mathbf1$ on positive times. Therefore, if $\mcU^{Z,a} = a \mathbf{1} + \widehat \mcU^{Z,a}$, then $\mcU^{Z,a}$ solves the fixed-point problem \eqref{eq:gKPZ_orig_fixed_point}.
	This implies that $\mcU^{Z,a}$ is a solution of the fixed-point problem for initial data $a$ on the same interval for every $a\in\mathbb R$. This implies the claimed bound for $\mcU^{Z,a}$. Uniqueness for the problem started at $a$ follows similarly since given a solution $\mcU \in D^{\gamma, \eta}$ with initial data $a$, $\mcU - a \mathbf{1}$ solves the problem with vanishing initial data and nonlinearities $f_a$.
\end{proof}

We next turn to establishing the appropriate comparison principle for the smooth renormalised PDEs. We recall that the results of \cite{BCCH} imply that if $Z$ is a smooth symmetric model then the regularity structures solution associated to \eqref{eq:gKPZ} via $Z$ solves a PDE of the form
\begin{align*}
	(\partial_t - \Delta) u &= \Gamma(u) (\partial_x u)^2 + g(u) \partial_x u 
	\\
	& \qquad + h(u) + \sigma(u) \xi + \sum_{(a,b)} C^{a,b} \prod_{k \ge 0} (\Gamma^{(k)}(u))^{a_k} \prod_{j \ge 0} (\sigma^{(j)}(u))^{b_j}.
\end{align*}
This standard fact will also be recovered as part of the proofs of Lemma~\ref{lem:symm_reduction} and Lemma~\ref{lem:Upsilon} in Section~\ref{sec:gKPZ_identification} below.
\begin{prop}\label{prop:gKPZ_comparison}
	Let $\xi$ be smooth and let $\mathcal R$ be a smooth local function.
	Suppose that $u^1$ and $u^2$ solve
	\begin{align*}
		(\partial_t-\Delta)u^i
		={}&
		\Gamma(u^i)(\partial_xu^i)^2
		+g(u^i)\partial_xu^i
		+h(u^i)
		+\sigma(u^i)\xi
		+\mathcal R(u^i),
	\end{align*}
	with initial conditions $u_0^1,u_0^2\in C^\theta(\mathbb T)$ for some
	$\theta>0$. If $u_0^1\leq u_0^2$, then
	\[
		u^1(t,x)\leq u^2(t,x)
	\]
	throughout their common interval of existence.
\end{prop}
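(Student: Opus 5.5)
The plan is a classical comparison argument for the difference $w = u^1 - u^2$, using that the equation is semilinear with a first-order nonlinearity. Here $\mathcal R$ is a smooth function of $u$ only; it is a finite sum of products of derivatives of $\Gamma,\sigma$. Fix a compact interval $[0,T]$ inside the common interval of existence. On $(0,T]$ both $u^i$ are classical solutions: $\xi$ is smooth and the initial data are $C^\theta$, so parabolic regularity makes them smooth for $t>0$ and continuous up to $t=0$. Hence $\partial_x u^i$ is bounded on $[\tau,T]\times\mathbb T$ for each $\tau>0$. The difficulty is near $t=0$, where $\partial_x u^i$ may blow up like $t^{(\theta-1)/2}$. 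I would treat this by first proving the result for smooth ordered initial data and then approximating.

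\emph{Step 1 (linearisation).} Write $w=u^1-u^2$. By the mean value theorem, $w$ satisfies a linear equation
\begin{align*}
(\partial_t-\Delta)w = b(t,x)\,\partial_x w + c(t,x)\, w .
\end{align*}
Here $b = \Gamma(u^1)(\partial_xu^1+\partial_xu^2) + g(u^1)$. The coefficient $c$ collects the difference quotients of $\Gamma$, $g$, $h$, $\sigma$ and $\mathcal R$ in $u$, multiplied respectively by $(\partial_x u^2)^2$, $\partial_x u^2$, $1$, $\xi$ and $1$. On $[\tau,T]$, both $b$ and $c$ are bounded and continuous.

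\emph{Step 2 (maximum principle on $[\tau,T]$).} Set $\tilde w = e^{-Kt}w$ with $K > \sup_{[\tau,T]}|c|+1$. If $\tilde w$ has a positive maximum at an interior time, then at that point $\partial_x\tilde w = 0$, $\Delta\tilde w\le0$ and $\partial_t\tilde w\ge 0$. This contradicts $\partial_t\tilde w - \Delta\tilde w - b\,\partial_x\tilde w = (c-K)\tilde w<0$. So $\sup_{[\tau,T]} w^+ \le e^{KT}\sup_x w^+(\tau,\cdot)$.

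\emph{Step 3 (passing to $\tau\to 0$).} This is the main obstacle, because $K$ depends on $\tau$ through $\sup|c|$, which contains $(\partial_x u^2)^2$. I would try first to remove the singular part of $c$ with the integrating-factor trick already used in the deterministic part of this paper. Replace $w$ by $J(u^1)-J(u^2)$, where $J$ is the function from the proof of Lemma~\ref{lem:gKPZ_PDE_control}. Since $J$ is strictly increasing, the ordering of $u^1,u^2$ is equivalent to the ordering of $J(u^1),J(u^2)$. Each $z^i = J(u^i)$ solves
\begin{align*}
(\partial_t - \Delta) z^i = J'(u^i)\bigl(g(u^i)\partial_x u^i + h(u^i) + \sigma(u^i)\xi + \mathcal R(u^i)\bigr),
\end{align*}
because the quadratic gradient term cancels. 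Writing $\partial_x u^i = \partial_x z^i/J'(u^i)$, the right-hand side becomes $\tilde g(z^i)\partial_x z^i + F(t,x,z^i)$ with $F$ smooth in $z$. The linearised equation for $z^1-z^2$ then has drift $\tilde g(z^1)$ and a zeroth-order coefficient of the form (bounded) $+$ (bounded)$\cdot\,\partial_x z^2$. This coefficient is only $O(t^{(\theta-1)/2})$, which is integrable in time.

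The argument in Step 2 therefore gives $\sup_x (z^1-z^2)^+(t) \le \exp\bigl(\int_\tau^t \|c(s)\|_\infty ds\bigr)\sup_x (z^1-z^2)^+(\tau)$, and the exponential factor is bounded uniformly in $\tau$. Letting $\tau\to0$ and using continuity of $z^i$ at $t=0$ together with $z^1_0\le z^2_0$ gives $z^1\le z^2$, hence $u^1\le u^2$. To avoid the time-dependent $K$ in Step 2, I would use the weight $\exp(-\int_\tau^t(\|c(s)\|_\infty+1)ds)$ in place of $e^{-Kt}$. If the bound $\|\partial_x u^i(t)\|_\infty\lesssim t^{(\theta-1)/2}$ is not directly available, I would instead approximate the initial data by smooth $u_0^{i,n}$ with $u^{1,n}_0\le u^{2,n}_0$ (for example via heat-semigroup mollification, which preserves order). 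I would then apply the argument to these solutions, where everything is bounded up to $t=0$, and pass to the limit using continuous dependence on the initial data in $C^\theta$.
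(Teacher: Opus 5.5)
Your argument is correct, and your main route differs from the paper's. The paper never works with the rough solutions near $t=0$. It mollifies the initial data, which preserves the ordering, so the approximating solutions are classical up to $t=0$ and the linearised coefficients $b,c$ are bounded on all of $[0,T]$. The constant weight $e^{-Kt}$ then suffices, and the inequality is transferred to $u^i$ by continuous dependence on the initial data in $C^{\theta'}$, $\theta'<\theta$. This is exactly your fallback. One small correction there: mollified $C^\theta$ data converge only in $C^{\theta'}$, not in $C^\theta$, so the stability statement must be invoked with $\theta'$.

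Your primary route keeps the original solutions and uses the time-dependent weight $\exp\bigl(-\int_\tau^t(\|c(s)\|_\infty+1)\,ds\bigr)$, then lets $\tau\to0$. This only requires $\int_0^T\|c(s)\|_\infty\,ds<\infty$ together with uniform continuity of the solutions at $t=0$. What it buys is that you need no stability of the solution map and no approximation of the data. What it costs is the quantitative bound $\|\partial_x u^i(t)\|_\infty\lesssim t^{(\theta-1)/2}$ (or any time-integrable bound). That bound holds in the natural weighted solution class but is not among the stated hypotheses; the paper only needs qualitative boundedness of $b,c$ for smooth data.

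The $J$-transform is also unnecessary for $\theta>0$. Already in your Step 1, the worst term in $c$ is $\frac{\Gamma(u^1)-\Gamma(u^2)}{u^1-u^2}(\partial_x u^2)^2\lesssim t^{\theta-1}$, which is integrable, so the time-dependent weight works directly for $w=u^1-u^2$. The transform reduces the singular part of $c$ to $O(|\partial_x z^2|)\lesssim t^{(\theta-1)/2}$. That reduction would only be essential at $\theta=0$, i.e.\ for $L^\infty$ data with $\partial_x u\sim t^{-1/2}$, where $(\partial_x u)^2\sim t^{-1}$ is no longer integrable. This is a nice observation in the spirit of the paper's initial-layer analysis, but it is not needed for the proposition as stated.
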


\begin{proof}
	Fix $\theta'\in(0,\theta)$, and let $\rho_\delta$ be a smooth,
	non-negative mollifier on $\mathbb T$. Set
	$
		u_0^{i,\delta}=\rho_\delta*u_0^i.
	$
	Then
	$
		u_0^{2,\delta}-u_0^{1,\delta}
		=
		\rho_\delta*(u_0^2-u_0^1)
		\geq0,
	$
	and
	$
		u_0^{i,\delta}\to u_0^i
	$
	in $C^{\theta'}(\mathbb{T})$.

	Let $u^{i,\delta}$ denote the solution of the same equation with
	initial condition $u_0^{i,\delta}$. 
		Set
	$$
		F(t,x,r,p)
		=
		\Gamma(r)p^2+g(r)p+h(r)
		+\sigma(r)\xi(t,x)+\mathcal R(r).
	$$
	For $s\in[0,1]$, write
	$$
		u_s=u^{2,\delta}
		+s(u^{1,\delta}-u^{2,\delta}),
		\qquad
		p_s=\partial_xu^{2,\delta}
		+s(\partial_xu^{1,\delta}
		-\partial_xu^{2,\delta}).
	$$
	Then $w=u^{1,\delta}-u^{2,\delta}$ solves
	$$
		(\partial_t-\Delta)w
		=b\,\partial_xw+c\,w,
	$$
	where
	$$
		b(t,x)=\int_0^1\partial_pF(t,x,u_s,p_s)\,ds,
		\qquad
		c(t,x)=\int_0^1\partial_rF(t,x,u_s,p_s)\,ds.
	$$
	On every compact interval of classical existence, $b$ and $c$ are
	bounded. Choose $K>\|c\|_{L^\infty}$ and set $z=e^{-Kt}w$. Then
	$$
		\partial_tz-\Delta z-b\partial_xz+(K-c)z=0,
		\qquad K-c>0.
	$$
	Since $z(0,\cdot)\leq0$, the weak parabolic maximum principle implies
	that $z\leq0$, and hence
	$$
		u^{1,\delta}\leq u^{2,\delta}
	$$
	throughout their common interval of existence.

	It remains to send $\delta \to 0$. Fix $T$ strictly below the common existence time of $u^1$ and $u^2$.
	Continuous dependence on the initial condition in $C^{\theta'}$
	implies that, for all sufficiently small $\delta$, the solutions
	$u^{i,\delta}$ exist up to time $T$ and
	$$
		u^{i,\delta}\longrightarrow u^i
		\qquad\text{uniformly on }[0,T]\times\mathbb T.
	$$
	Passing to the limit preserves the inequality. Since $T$ was
	arbitrary, the result follows.
\end{proof}
\begin{corollary}\label{cor:gKPZ_comparison}
	Let $u^i$ denote the solutions to the gKPZ equation with initial data $\psi^i$. If $\psi^1 \le \psi^2$ then $u^1 \le u^2$ up to the explosion times of their $C^\theta$ norms.
\end{corollary}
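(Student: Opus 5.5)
The plan mirrors the proof of Corollary~\ref{cor:comparison}: prove the inequality for smooth symmetric models, where the solutions are classical, and then pass to the limit. Fix $T$ strictly smaller than the minimum of the explosion times of the $C^\theta$ norms of $u^1$ and $u^2$. Since the driving model lies in $\mathcal M^s$, we can choose smooth symmetric models $Z_n \to Z$, for example the BPHZ lifts of mollified noises $\xi_\eps$. Let $u^i_n = \mathcal R^{Z_n} \mcU^{Z_n,\psi^i}$ be the corresponding solutions of the abstract gKPZ fixed-point problem.

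The first step is to identify the $u^i_n$ as classical solutions. By the results of \cite{BCCH} recalled just before Proposition~\ref{prop:gKPZ_comparison}, each $u^i_n$ solves a classical PDE of the form
\begin{align*}
(\partial_t - \Delta) u = \Gamma(u)(\partial_x u)^2 + g(u)\partial_x u + h(u) + \sigma(u)\xi_n + \mathcal R_n(u).
\end{align*}
Here $\xi_n$ is smooth, and the counterterm $\mathcal R_n(u) = \sum_{(a,b)} C^{a,b}_n \prod_k (\Gamma^{(k)}(u))^{a_k}\prod_j(\sigma^{(j)}(u))^{b_j}$ is a smooth local function of $u$. Crucially, the renormalisation constants $C^{a,b}_n$ depend only on the model $Z_n$ and not on the initial datum. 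Hence $u^1_n$ and $u^2_n$ solve the same equation, and we may apply Proposition~\ref{prop:gKPZ_comparison} with initial data $\psi^1 \le \psi^2$ in $C^\theta$. This gives $u^1_n \le u^2_n$ on their common interval of classical existence.

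The second step, which is the main point needing care, is the limit $n\to\infty$. The continuity of the solution map provided by \cite{BCCH} (see also \cite[Theorem~7.8, Proposition~7.11, Corollary~7.12]{H0}) holds up to the explosion time of the $C^\theta$ norm. It implies that for $n$ large enough both $u^i_n$ exist on $[0,T]$, and that $u^i_n \to u^i$ uniformly on $[0,T]\times\mathbb T$. One should also check that the approximating solutions do not blow up before $T$: this follows because the existence time is lower semicontinuous in the model and the initial data, which is part of the same continuity statement. Passing to the limit in $u^1_n \le u^2_n$ gives $u^1 \le u^2$ on $[0,T]$. Since $T$ was arbitrary below the explosion times, the claim follows.

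A subtlety to address explicitly is that the counterterm in Proposition~\ref{prop:gKPZ_comparison} may be any smooth local function, so no symmetry is needed for the comparison itself. Symmetry of the model enters only in guaranteeing that $\mathcal R_n$ contains no $\partial_x u$-dependent terms. Such terms would still be admissible in the proof of Proposition~\ref{prop:gKPZ_comparison}, since its linearisation argument only requires $F$ to be smooth in $(r,p)$. So the argument is robust here, and we would simply remark on this.
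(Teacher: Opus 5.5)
Your proposal is correct and takes the same route as the paper. The paper applies Proposition~\ref{prop:gKPZ_comparison} to the renormalised equation driven by the mollified noise $\xi_\eps$, whose counterterm depends only on the model and not on the initial datum, and then sends $\eps \to 0$ using continuity of the solution map up to the $C^\theta$ explosion time. You spell out that limiting step (existence of the approximants on $[0,T]$ for large $n$, and uniform convergence) in more detail than the paper does, and your remark that $\partial_x u$-dependent counterterms would not obstruct the linearisation argument is also right.
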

\begin{proof}
Apply Proposition~\ref{prop:gKPZ_comparison} to the gKPZ equation driven by $\xi_\eps$ and then send $\eps \to 0$. 
\end{proof}
\begin{theorem}
	Maximal solutions to gKPZ associated to symmetric models are global in the sense that for every $T < \infty$, 
	\begin{align*}
		\sup_{0 \le s \le T} \|u(s, \cdot)\|_{C^\theta} < \infty.
	\end{align*}
\end{theorem}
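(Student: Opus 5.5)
We argue by contradiction, mirroring the proof of the corresponding theorem for KPZ in the previous section. Suppose the explosion time $T_{\mathrm{max}}$ of the $C^\theta$ norm of the maximal solution $u$ is finite. By the remark following the main theorem, the regularity structures theory of \cite{BCCH} provides a continuous solution map up to $T_{\mathrm{max}}$. So it suffices to show that $\sup_{T_{\mathrm{max}}/2<t<T_{\mathrm{max}}}\|u(t,\cdot)\|_{C^\theta}<\infty$. We will not have to restart the equation at $T_{\mathrm{max}}$.

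\textbf{Step 1: supremum bound.} Fix $\delta=1$ and let $T_\delta$ be given by Lemma~\ref{lem:gKPZ_constant_initial_data}, chosen uniformly over $a\in\mathbb R$ and over time-shifted models $\tau_{t_0}Z$ with $t_0\in[0,T_{\mathrm{max}}]$. This is possible since these models lie in a bounded set on $[-1,T_{\mathrm{max}}+2]\times\mathbb T$. For $t_0<T_{\mathrm{max}}$ set $A_\pm$ to be the supremum and infimum of $u(t_0,\cdot)$. Let $u_{A_\pm}$ be the solutions started at time $t_0$ from the constants $A_\pm$ with respect to $\tau_{t_0}Z$. Constants lie in $C^\theta$, and restarting $u$ at $t_0$ gives the same solution by the usual restart property of the fixed-point problem. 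So Corollary~\ref{cor:gKPZ_comparison} gives
\begin{align*}
	u_{A_-}\le u\le u_{A_+}\quad\text{on }[t_0,(t_0+T_\delta)\wedge T_{\mathrm{max}}).
\end{align*}
By Lemma~\ref{lem:gKPZ_constant_initial_data}, $\mcU^{Z,a}=a\mathbf 1+\widehat{\mcU}^{Z,a}$ with $\|\widehat{\mcU}^{Z,a}\|\le\delta$, and the reconstruction of $\widehat{\mcU}^{Z,a}$ is bounded in supremum norm by $C\delta$. Hence $\|u(t,\cdot)\|_\infty\le\|u(t_0,\cdot)\|_\infty+C\delta$ on that interval. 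Iterating over $\lceil T_{\mathrm{max}}/T_\delta\rceil$ steps gives
\begin{align*}
	R:=\sup_{t<T_{\mathrm{max}}}\|u(t,\cdot)\|_\infty<\infty .
\end{align*}

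\textbf{Step 2: upgrade to $C^\theta$.} Apply Lemma~\ref{lem:gKPZ_blow_up_alt} with this $R$ and with $M$ the model norm of $Z$ on $[-1,T_{\mathrm{max}}+2]\times\mathbb T$. This yields $r$ and $C$. The lemma is stated for fixed $r$, but it holds equally for any smaller $r$ by the same proof, since the fixed-point time $T_R$ of Lemma~\ref{lem:gKPZ_conj_fixed_point} may be shrunk at the cost of a larger constant. So we may take $r<T_{\mathrm{max}}/4$. Every $t\in(T_{\mathrm{max}}/2,T_{\mathrm{max}})$ can be written as $t=s+\lambda$ with $\lambda\in[r,2r]$ and $0<s<T_{\mathrm{max}}$. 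Since $\|u(s,\cdot)\|_\infty\le R$, the lemma gives $\|u(t,\cdot)\|_{C^\theta}\le C$. On $[0,T_{\mathrm{max}}/2]$ the norm is finite by continuity. Together these contradict the definition of $T_{\mathrm{max}}$. Any $\theta$ for which the lemmas apply suffices, since the explosion time does not depend on $\theta\in(0,\alpha+2)$ by local theory.

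\textbf{Main obstacle.} The delicate point is the uniformity in Step 1. The comparison must hold up to the explosion time of the barriers as well as that of $u$. Lemma~\ref{lem:gKPZ_constant_initial_data} guarantees the barriers exist on $[t_0,t_0+T_\delta]$ uniformly in $a$. This uses that the nonlinearities $f_a$ have the same $C^N_b$ norms for all $a$, and that this uniformity persists over the compact family of shifted models. I would check this first, together with the fact that the reconstructed correction is small in $L^\infty$ near time $0$. That smallness follows from the weight $\eta>0$ of the modelled distribution.
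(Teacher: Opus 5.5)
Your proposal is correct and follows essentially the same route as the paper. You obtain an $L^\infty$ bound by sandwiching $u$ between constant-data barriers (Corollary~\ref{cor:gKPZ_comparison} together with the $a$-uniform short-time theory of Lemma~\ref{lem:gKPZ_constant_initial_data}, iterated over steps of length $T_\delta$), and then upgrade to $C^\theta$ via Lemma~\ref{lem:gKPZ_blow_up_alt} with $r<T_{\mathrm{max}}/4$; your use of the decomposition $\mathcal{U}^{Z,a}=a\mathbf{1}+\widehat{\mathcal{U}}^{Z,a}$ rather than the bound $|a|+\delta$, and your explicit treatment of uniformity over the shifted models $\tau_{t_0}Z$ and of the covering $t=s+\lambda$ with $\lambda\in[r,2r]$, spell out details the paper leaves implicit without changing the argument.
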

\begin{proof}
	Suppose that $T_\mathrm{max}< \infty$ where $T_\mathrm{max}$ is the explosion time of the $C^\theta$ norm of $u$. Fix a time $t_0 < T_\mathrm{max}$ and set $A_+ = \sup_{x \in \mathbb{T}} u(t_0,x)$ and $A_- = \inf_{x \in \mathbb{T}} u(t_0,x)$. We let $u_{A_\pm}$ be the solutions started at time $t_0$ with initial data $A_\pm$. By Corollary~\ref{cor:gKPZ_comparison}, we have that for $t \in [t_0, (t_0 + T_\delta) \wedge T_\mathrm{max})$
	\begin{align*}
		u_{A_-}(t,x) \le u(t,x) \le u_{A_+}(t,x).
	\end{align*}
	By Lemma~\ref{lem:gKPZ_constant_initial_data}, we then have that 
	\begin{align*}
		A_- - \delta \le u(t,x) \le A_+ + \delta.
	\end{align*}
	Therefore 
	\begin{align*}
		\sup_{t \in [t_0, (t_0 + T_\delta) \wedge T_\mathrm{max})} \|u(t, \cdot)\|_\infty \le \|u(t_0, \cdot)\|_\infty + \delta.
	\end{align*}
	Iterating this estimate yields
	\begin{align*}
		\sup_{t < T_\mathrm{max}} \|u(t, \cdot)\|_\infty \le \|u(0, \cdot)\|_\infty + \delta \Bigl \lceil \frac{T_\mathrm{max}}{T_\delta} \Bigr \rceil < \infty.
	\end{align*}
	By Lemma~\ref{lem:gKPZ_blow_up_alt}, we then have that
	\begin{align*}
		\sup_{T_{\mathrm{max}}/2 < t < T_{\mathrm{max}}} \|u(t, \cdot)\|_{C^{\theta}} \le C
	\end{align*}
	for some constant $C$ which depends only on $\|u(0, \cdot)\|_\infty, T_\mathrm{max}$ and $\delta$. We used here the fact that $r$ in Lemma~\ref{lem:gKPZ_blow_up_alt} can be chosen to be arbitrarily small and in particular less than $T_\mathrm{max}/4$. Since $T_\mathrm{max}$ is the explosion time of the $C^{\theta}$ norm, this is a contradiction.
\end{proof}

\section{Proof of Lemma~\ref{lem:gKPZ_identification}}\label{sec:gKPZ_identification}
To complete the proof, it remains to prove Lemma~\ref{lem:gKPZ_identification}; meaning that it remains to show that for a smooth, symmetric model $Z$, $u = u^\psi + \mathcal{R}^Z\mcV$ solves the gKPZ equation with counterterm associated to the model $Z$. The main difficulty is that the system for $\mcV$ is not in a form where the results of \cite{BCCH} are applicable so it's not immediate to deduce the form of the renormalised PDE. This motivates trying to rewrite the fixed point in a different form. Our first step is trying to rewrite the input modelled distributions $A, B, C$ in terms of a single modelled distribution $\mcY$. 

We let $y$ be the solution of the heat equation 
\begin{align}
	(\partial_t - \Delta)y = 0, \qquad y(0, \cdot) = J(\psi)
\end{align}

Then $u^\psi = Q(y)$ so that by injectivity of the reconstruction operator on the polynomial sector, we have that $\mcU^\psi = Q(\mcY)$ where $\mcY$ is the Taylor lift of $y$. We also note that 
$$\mcR \mcE(- \Phi(u^\psi)) = e^{- \Phi(u^\psi)} = \frac{1}{J'(u^\psi)}, \qquad \mcR \mcE(\Phi(u^\psi)) = e^{\Phi(u^\psi)} = J'(u^\psi)$$
so that
$\mcE(-\Phi(u^\psi)) = a(\mcY)$ and $\mcE(\Phi(u^\psi)) = b(\mcY)$ where 
$$a = \frac{1}{J' \circ Q}, \qquad b = J' \circ Q.$$

We set $\mcW = b(\mcY) \mcV$. We now derive the system solved by $\mcW$. The advantage of this system is that it will be in the form required to apply the results of \cite{BCCH} (see also \cite{BB26}) in order to find the form of the renormalised equation. In what follows, for readability, we will suppress the appearance of $\mcY$ in the functions so that, for example, $a$ will be a shorthand for $a(\mcY)$. We have that the system for $\mcV$ is equivalent to the system
\begin{align}
	\mcY &= \mcG J(\psi) \label{eq:W_sys}
	\\ \nonumber
	\mcW &= \mcP_\gamma \mathbf{1}_+ b \Bigl [ \Gamma(a\mcW + Q) [a D \mcW + a'\mcW D\mcY]^2 + 2 \tilde{\Gamma}^0 (a \mcW, Q)
	a Q' \mcW  D \mcY [a D \mcW + a' \mcW D \mcY] 
	\\ \nonumber
	& \qquad + \tilde{\Gamma}^1(a \mcW, Q) a^2 \mcW^2 (D \mcY)^2 (Q')^2 + g(a \mcW + Q) [ a D\mcW + a' \mcW D \mcY 
	\\ \nonumber
	& \qquad + Q' D \mcY] + h(a \mcW + Q) + \sigma(a \mcW + Q) \Xi \Bigr ].
\end{align}

\begin{lemma}\label{lem:renormalised_eq}
	Let $Z \in \mathcal{M}^s$ be a smooth model over the smooth noise $\xi$. Then we have that $u = \mathcal{R}^Z \mcV + u^\psi$ solves
	\begin{align}
		(\partial_t - \Delta) u = \Gamma(u) (\partial_x u)^2 + g(u) \partial_x u + h(u) + \sigma(u) \xi + a(y) \mathfrak{C}(y,w,\partial_x y, \partial_x w)
	\end{align}
	where $w = \mathcal{R}^Z \mcW$ and $\mathfrak{C}$ is the counterterm associated to the system \eqref{eq:W_sys} via the model $Z$.
\end{lemma}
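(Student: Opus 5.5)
Since \eqref{eq:W_sys} is a system of subcritical equations driven by a smooth admissible model, the plan is to apply the general renormalised-equation theorem of \cite{BCCH} (see also \cite{BB26}) to it, and then undo the change of variables $\mcW = b(\mcY)\mcV$, $\mcU^\psi = Q(\mcY)$ at the level of smooth functions.

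\textbf{Step 1: the system for $(\mcY,\mcW)$.} The first component of \eqref{eq:W_sys} is the lift of the linear heat equation with initial datum $J(\psi)$. Since it contains no noise and no products, it carries no renormalisation: $y=\mathcal R^Z\mcY$ solves $(\partial_t-\Delta)y=0$ exactly. The second component has the standard form $\mcW=\mcP_\gamma\mathbf 1_+F(\mcY,\mcW,D\mcY,D\mcW,\Xi)$, where $F$ is a local nonlinearity that is polynomial in $D\mcY, D\mcW$ and affine in $\Xi$, with smooth coefficients in $(\mcY,\mcW)$. Hence \cite{BCCH} shows that $w=\mathcal R^Z\mcW$ solves
\begin{align*}
(\partial_t-\Delta)w = F(y,w,\partial_xy,\partial_xw,\xi)+\mathfrak C(y,w,\partial_xy,\partial_xw),
\end{align*}
where $\mathfrak C=\sum_\tau \ell(\tau)\Upsilon^F[\tau]/S(\tau)$ is built from the elementary differentials, with $\ell$ the character defining $Z$. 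To use this, I will check two things: that the fixed-point problems for $\mcV$ and for $\mcW$ agree (by multiplying the $\mcV$ equation by $b(\mcY)$ and using the chain rule for $D$ on the polynomial sector), and that the structure built from the rule of \eqref{eq:W_sys} contains the trees generated by this system, so that $Z\in\mathcal M^s$ acts on it. The main technical nuisance here is that $\mcY$ lives in the polynomial sector while the abstract system treats it as a genuine component. I will handle this by noting that any tree whose only noises sit on $\mcY$-edges has vanishing $\Pi$ after subtraction, so it contributes no counterterm.

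\textbf{Step 2: undoing the transformation for the smooth equation.} Set $v=a(y)w$ and $u=v+Q(y)$. I will redo, in reverse, the heuristic calculation from the beginning of this section, now with the additional term $\mathfrak C$. Using $y$ harmonic for $\partial_t-\Delta$, $Q(y)=u^\psi$ and $a=e^{-\Phi(u^\psi)}$, this computation shows that $(\partial_t-\Delta)v$ equals the right-hand side of the $v$-equation in Lemma~\ref{lem:gKPZ_identification} (without its counterterm) plus $a(y)\mathfrak C$. This is legitimate because $\mathcal R$ is multiplicative on smooth models only modulo renormalisation, and all of the renormalisation has already been captured in $\mathfrak C$. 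I will then add the equation for $u^\psi$. The terms involving $\Gamma'(u^\psi)v(\partial_xu^\psi)^2$ and $\hat\Gamma^1$ recombine with $\Gamma(u^\psi)(\partial_xu^\psi)^2$ to give the full Taylor expansion $\Gamma(u)(\partial_xu)^2$. The cross terms recombine into $g(u)\partial_xu+h(u)+\sigma(u)\xi$. This yields the claimed equation for $u$.

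\textbf{Main obstacle.} The expected difficulty is identifying the counterterm $\mathfrak C$, that is, showing that $a(y)\mathfrak C$ reduces to the usual gKPZ counterterm evaluated at $u$. That identification is presumably the content of the subsequent lemma on $\Upsilon$ (Lemma~\ref{lem:Upsilon}) and is not needed for the present statement, which only asserts the form with $a(y)\mathfrak C$. So for this lemma, the only delicate point is verifying the hypotheses of \cite{BCCH}: the system must be subcritical for the chosen degrees, and the model must be the admissible smooth model renormalised by $\ell$. Both follow from the power counting already done in Lemma~\ref{lem:gKPZ_conj_fixed_point}.
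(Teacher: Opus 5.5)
Your proposal is correct and follows essentially the same route as the paper. You apply the renormalised-equation theorem of \cite{BCCH} to the $(\mcY,\mcW)$ system to get the $w$-equation with counterterm $\mathfrak C$. You then undo $w=b(y)v$ using $ab=1$, $\partial_x b(y)=b(y)\Gamma(u^\psi)\partial_x u^\psi$ and $(\partial_t-\Delta)b(y)=-b(y)\Gamma'(u^\psi)(\partial_x u^\psi)^2$, so the counterterm becomes $a(y)\mathfrak C$, and add the $u^\psi$ equation to recombine into $\Gamma(u)(\partial_xu)^2+g(u)\partial_xu+h(u)+\sigma(u)\xi$.

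One small correction to your Step 1: trees with $\mcY$-edges drop out not because of any subtraction (no noise ever sits on a $\mcY$-edge), but because the $\mcY$-component has vanishing nonlinearity, so the corresponding elementary differentials vanish.
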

\begin{proof}
Since $a' = - a^2 \Gamma \circ Q$ we have that
$$a D\mcW + a' \mcW D\mcY = a [D\mcW - a  \mcW D \mcY \Gamma \circ Q].$$ 
We also note that $Q' = a$ and therefore
\begin{align*}
	\mcW &= \mcP_\gamma \mathbf{1}_+ b \Bigl [ a^2 \Gamma(a\mcW + Q) [ D \mcW - a \mcW D\mcY \Gamma \circ Q]^2 + 2 \tilde{\Gamma}^0 (a \mcW, Q)
	\\
	& \qquad \times a^3 \mcW  D \mcY [ D \mcW - a  \mcW D \mcY \Gamma \circ Q] + \tilde{\Gamma}^1(a \mcW, Q) a^4 \mcW^2 
	\\
	& \qquad \times (D \mcY)^2 + a g(a \mcW + Q) [ D\mcW - a \mcW D \mcY \Gamma \circ Q 
	\\ 
	& \qquad + D \mcY] + h(a \mcW + Q) + \sigma(a \mcW + Q) \Xi \Bigr ].
\end{align*}
We now note that $a(\mcY) b(\mcY) = 1$. Indeed, this relation is obvious from the definition of $a$ and $b$ at the level of the reconstructions and thus follows for the modelled distributions from the injectivity of the reconstruction operator on the polynomial sector. Therefore, written in the form required to apply the results of \cite{BCCH}, the non-linearities for $\mcW$ are
\begin{align*}
F_W^0(\mcY, \mcW, D\mcY, D\mcW) &= \Bigl [ a \Gamma(a\mcW + Q) [D \mcW - a \mcW D\mcY \Gamma \circ Q]^2 + 2 \tilde{\Gamma}^0 (a\mcW, Q)
	\\
	& \qquad \times a^2 \mcW  D \mcY [ D \mcW - a \mcW D \mcY \Gamma \circ Q] + \tilde{\Gamma}^1(a \mcW, Q) a^3 \mcW^2 
	\\
	& \qquad \times (D \mcY)^2  + g(a \mcW + Q) [ D\mcW - a  \mcW D \mcY \Gamma \circ Q
	\\ 
	& \qquad + D \mcY] + b h(a \mcW + Q) \Bigr ]
	\\
	F_W^\Xi(\mcY, \mcW, D\mcY, D\mcW) &= b \sigma(a \mcW + Q)
\end{align*}
We now let $u = a(y) w + Q(y)$ where $w = \mathcal{R} \mcW$ and $y = \mcR \mcY$.

We compute
\begin{align*}
	\partial_x u &= a(y) \partial_x w - a(y)^2 w \partial_x y \Gamma(Q(y)) + Q'(y) \partial_x y
	\\
	&= a(y) \partial_x w - a(y)^2 w \partial_x y \Gamma(Q(y)) + a(y) \partial_x y.
\end{align*}
The main result of \cite{BCCH} then implies that $w$ solves
\begin{align*}
	(\partial_t - \Delta) w &= b(y) \Gamma(u) (\partial_x u - a(y) \partial_x y)^2 + 2 [\Gamma(u) - \Gamma(Q(y))] \partial_x y (\partial_x u - a(y) \partial_x y) 
	\\
	& \qquad + a [ \Gamma(u) - \Gamma(Q(y)) - \Gamma'(Q(y)) a(y) w] (\partial_x y)^2 + b(y) g(u) \partial_x u  + b(y) h(u) 
	\\ 
	& \qquad + b(y) \sigma(u) \xi + \mathfrak{C}(y,w, \partial_x y, \partial_x w)
	\\
	&= b(y) \Bigl [ \Gamma(u) (\partial_x u - a(y) \partial_x y)^2 + 2 [\Gamma(u) - \Gamma(Q(y))] a \partial_x y (\partial_x u - a(y) \partial_x y) 
	\\
	& \qquad +  [ \Gamma(u) - \Gamma(Q(y)) - \Gamma'(Q(y)) a(y) w] a^2 (\partial_x y)^2 + g(u) \partial_x u  + h(u) 
	\\ 
	& \qquad +  \sigma(u) \xi + b(y)^{-1}  \mathfrak{C}(y,w, \partial_x y, \partial_x w) \Bigr ]
\end{align*}
where $\mathfrak{C}(y,w, \partial_x y, \partial_x w)$ is the counterterm. For now, the precise form of the counterterm is not important so we just carry the placeholder through the calculation. 

Now if $w = bv$ we have that
\begin{align*}
	(\partial_t - \Delta) w = b(\partial_t - \Delta) v + v (\partial_t - \Delta) b - 2 \partial_x b \partial_x v
\end{align*}
where, by construction, we have
\begin{align*}
	(\partial_t - \Delta) b(y) &= - b(y) \Gamma'(u^\psi) (\partial_x u^\psi)^2 
	\\
	\partial_x b(y) &= b(y) \Gamma(u^\psi) \partial_x u^\psi
\end{align*}
where we have used the fact that $Q(y) = u^\psi$. 

Therefore 
\begin{align*}
	(\partial_t - \Delta) v = b(y)^{-1} (\partial_t - \Delta) w + \Gamma'(u^\psi) v (\partial_x u^\psi)^2 + 2 \Gamma(u^\psi) (\partial_x u^\psi) (\partial_x v).
\end{align*}

We now note that $\partial_x u^\psi = \partial_x Q(y) = a(y) \partial_x y$. Therefore
\begin{align*}
	(\partial_t - \Delta) v &= \Gamma(u) (\partial_x u - \partial_x u^\psi)^2 + 2 [\Gamma(u) - \Gamma(u^\psi)] \partial_x u^\psi (\partial_x u - \partial_x u^\psi) \\
	& \qquad + [\Gamma(u) - \Gamma(u^\psi) - \Gamma'(u^\psi) v ](\partial_x u^\psi)^2 + g(u) \partial_x u + h(u) 
	\\
	& \qquad + \sigma(u) \xi + b(y)^{-1} \mathfrak{C}(y, w, \partial_x y, \partial_x w) + \Gamma'(u^\psi) v (\partial_x u^\psi)^2 + 2 \Gamma(u^\psi) (\partial_x u^\psi)(\partial_x v) 
	\\
	&= \Gamma(u) (\partial_x u)^2 + g(u) \partial_x u + h(u) + \sigma(u) \xi + a(y) \mathfrak{C}(y, w, \partial_x y, \partial_x w) 
	\\
	& \qquad - \Gamma(
	u^\psi) (\partial_x u^\psi)^2.
\end{align*}
Since $u = v + u^\psi$ where $u^\psi$ solves
\begin{align*}
	(\partial_t - \Delta) u^\psi = \Gamma(u^\psi)(\partial_x u^\psi)^2
\end{align*}
this completes the proof. 
\end{proof}
Therefore to prove Lemma~\ref{lem:gKPZ_identification}, it remains to show that the counterterm for gKPZ driven by the smooth symmetric model $Z$ is the same as $a(y) \mathfrak{C}(y, w, \partial_x y , \partial_x w)$.

We begin by recalling that by \cite{BCCH}, the counterterms $\mathfrak{C}$ are of the form
\begin{align*}
	\sum_{|\tau|_\fs < 0} \frac{\Upsilon_\diamond[\tau]}{S(\tau)} \ell(\tau)
\end{align*}
for $\diamond \in \{u, w\}$.
\begin{lemma}\label{lem:symm_reduction}
	We have that $\Upsilon_w[\tau] = 0$ unless $\tau$ contains no edges corresponding to $\mcY$, $D\mcY$. Furthermore, for $\diamond \in \{u, w\}$, we have that $\frac{\Upsilon_\diamond[\tau]}{S(\tau)} \ell(\tau) \neq 0$ implies that every vertex in $\tau$ has outgoing edges appearing in one of the two forms
	\begin{align*}
		\<d2j> \qquad \<jn>
	\end{align*}
\end{lemma}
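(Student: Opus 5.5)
The first claim is structural and does not use symmetry. The second combines the explicit shape of the nonlinearities $F_W^0, F_W^\Xi$ with the parity condition of Definition~\ref{def:symmetric_model}.

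\emph{Step 1: trees containing $\mcY$-edges.} I would write \eqref{eq:W_sys} as a two-component system in the sense of \cite{BCCH}, with types $\mcY$ and $\mcW$ and nonlinearities $F_Y\equiv 0$ and $F_W = F_W^0 + F_W^\Xi \Xi$. The component $\mcY = \mcG J(\psi)$ has no right-hand side to integrate. In the recursive definition of $\Upsilon$, an edge of type $\mcY$ or $D\mcY$ leaving a vertex $v$ towards a subtree $\sigma$ contributes a factor $\Upsilon_Y[\sigma]$. This factor is built from derivatives of $F_Y$, so it vanishes identically. The only exception is the case where $\sigma$ is a pure polynomial, and that case is already accounted for by the variables $y$ and $\partial_x y$ on which $\Upsilon$ is evaluated. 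Hence $\Upsilon_w[\tau]=0$ as soon as $\tau$ contains a $\mcY$ or $D\mcY$ edge. After this step, the $\mcY$-dependence survives only through the smooth coefficients $a, b, Q, \Gamma\circ Q$ and through the explicit factors $\partial_x y$.

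\emph{Step 2: local vertex types.} Consider a vertex $v$ of a tree with $\Upsilon_\diamond[\tau]\neq 0$. Its outgoing edges correspond to derivatives of $F_W$ with respect to $\mcW$ (thin edges), $D\mcW$ (thick edges) and $\Xi$ (noise edges). I would read off from the formulas for $F_W^0$ and $F_W^\Xi$ that $F_W$ is affine in $\Xi$, so there is at most one noise edge. I would also check that $F_W$ is quadratic in $D\mcW$, so there are at most two thick edges, and that its $\Xi$-coefficient $b\,\sigma(a\mcW+Q)$ does not depend on $D\mcW$. Consequently the only possible outgoing configurations are:
\begin{itemize}
\item two thick edges and $j$ thin edges;
\item one noise edge and $j$ thin edges;
\item $j$ thin edges alone;
\item one thick edge and $j$ thin edges.
\end{itemize}
The first two are the pictured forms. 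The third is presumably also covered by the pictures (as the degenerate case of either) or is killed by power counting. The real content of the lemma is therefore to exclude the fourth configuration.

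\emph{Step 3: removing vertices with a single thick edge (main obstacle).} Symmetry forces $\ell(\tau)=0$ whenever $e+k_x$ is odd. This kills any tree with an odd total number of single-thick-edge vertices, provided no polynomial decorations are present. The difficulty is that this parity constraint is global, while the claim concerns every vertex. My first attempt would be to treat polynomial decorations first, using $\Pi_x X(x)=0$ as in Lemma~\ref{lem:identification} together with the fact that $F_W$ has no explicit $x$-dependence, to reduce to $k_x=0$. I would then consider an admissible tree that has a pair of single-thick-edge vertices and examine the $x\mapsto -x$ reflection acting on the \emph{coefficients} rather than on $\ell$. Every term of $F_W$ that is linear in $D\mcW$ carries, besides $g(\cdot)$, an odd-in-space factor such as $D\mcY$. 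I would therefore run a joint parity count in which $\partial_x y$ is treated as a variable of odd weight, as in the earlier KPZ argument where the $C\,\partial_x h$ counterterm was excluded.

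If that joint count cannot be made to close vertex by vertex, my fallback would be power counting. For $\alpha\in(-2,-3/2)$, I would check that every negative-degree tree containing a single-thick-edge vertex either has odd $e+k_x$, or contains a subtree whose contraction is forbidden by the symmetry assumption on $\ell$. This reduces the problem to a finite case analysis over the admissible vertex types identified in Step 2.
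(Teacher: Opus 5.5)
\textbf{There is a genuine gap.} Your Step 1 matches the paper. Step 3, however, leaves the real content of the lemma unproved, and the route you favour there cannot work. The missing idea is a global power-counting identity. Let $\tau$ have $n$ noise vertices, $m$ noise-free vertices (so $n+m-1$ kernel edges), $e$ derivative edges and polynomial decoration $k$. Then
\[
|\tau|_\fs = -2 + n(\alpha+2) + (2m-e) + |k|_\fs .
\]
Each noise-free vertex $v$ has $e_v\le 2$ thick edges, since the nonlinearity is at most quadratic in the gradient. Noise vertices have none, since the $\Xi$-coefficient has no gradient dependence. Hence $2m-e=\sum_v(2-e_v)$ is a sum of non-negative local deficiencies. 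Every tree of negative degree has $n\ge 1$ (pure polynomials have non-negative degree), and $\alpha+2>0$, so $(2m-e)+|k|_\fs\in\{0,1\}$.

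So the total deficiency of the whole tree is at most one. There is no time polynomial and at most one $X_1$. At most one noise-free vertex fails to have two thick edges, and that vertex has exactly one. The global parity rule now suffices. If $k_x=1$ then $e=2m$ is even, and if $k=0$ with a deficient vertex then $e=2m-1$ is odd. In both cases $e+k_x$ is odd, so $\ell(\tau)=0$. The surviving trees have $e=2m$ and $k=0$, which is exactly the claim. Your worry that parity is global while the claim is local therefore disappears: a tree with two deficient vertices has non-negative degree and never enters the counterterm.

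This also corrects several points in your Steps 2 and 3.
\begin{itemize}
\item The configuration with only thin edges at a noise-free vertex is not a degenerate case of the pictured forms, which require either two thick edges or a noise. It must be excluded, and it is, because it contributes $2$ to $2m-e$ and hence forces positive degree.
\item Your primary strategy, a coefficient-level parity count treating $\partial_x y$ as odd, fails at the very term you flag as an exception. The term $g(a\mathcal{W}+Q)\,D\mathcal{W}$ produces single-thick-edge vertices with no odd factor, already for plain gKPZ. Only power counting combined with the symmetry of $\ell$ removes them.
\item $\Pi_x X(x)=0$ is not the relevant mechanism for polynomial decorations here. Power counting bounds $|k|_\fs\le 1$ and parity does the rest.
\item A tree-by-tree finite case analysis is not a viable fallback in the full subcritical regime. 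The number of negative-degree trees is unbounded as $\alpha\downarrow-2$, whereas the identity above handles all of them at once.
\end{itemize}
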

\begin{remark}
In the case of the equation for $u$ this is a well-known fact about the consequences of symmetry for the gKPZ equation. The novelty in the previous statement is that the same restriction holds for the counterterm for $\mcW$.
\end{remark}
\begin{proof}
We first note that in the case $\diamond = w$, no tree with an edge corresponding to the $\mcY$ equation contributes since the corresponding non-linearity is $0$.

The next step is to remove the majority of contributions by appealing to the fact that $\ell(\tau) = 0$ whenever $e + k_x$ is odd where $e$ denotes the number of edges in $\tau$ carrying a derivative and $k_x$ is the total degree of spatial polynomials of $\tau$. 

We write $n$ for the number of noises in $\tau$ and let $m$ count the number of internal vertices of $\tau$ that do not carry a noise. Since $\tau$ is a tree with $n+m$ vertices, there are $n + m - 1$ kernel edges. We thus see that
$$|\tau|_\fs = n \alpha + 2 (n + m - 1) - e + |k|_\fs = - 2 + n (\alpha + 2) + (2m - e) + |k|_\fs.$$
Since $\Upsilon_\diamond[\tau] \neq 0$ implies that $e \le 2m$ (since only an internal vertex with no noise may be followed by a derivative edge and then at most two due to the form of the non-linearity), each term in brackets is non-negative. Since $\alpha + 2 > 0$, we see that $2m - e + |k|_\fs \in \{0,1\}$. In particular, the polynomial decoration consists of at most one instance of $X_1$ and no polynomials in the time direction. In the case where $k_x = 1$, we have that $2m - e = 0$ so that $e$ is even. Thus $e + k_x$ is odd and $\ell(\tau) = 0$. On the other hand, if $k_x = 0$ and $2m - e \neq 0$ then we similarly have that $e + k_x = e$ is odd since in that case $e = 2m - 1$. This completes the proof.
\end{proof}

\begin{lemma}\label{lem:Upsilon}
	For every $\tau$ satisfying the constraints of the previous lemma, we have that
	\begin{align}\label{eq:counterterm_claim}
	\Upsilon_u[\tau] = b^{-1} \Upsilon_w[\tau].
	\end{align}
\end{lemma}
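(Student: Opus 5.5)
We argue by induction on the number of vertices of $\tau$, using the recursive description of the coefficients $\Upsilon_\diamond[\tau]$ from \cite{BCCH}. By Lemma~\ref{lem:symm_reduction} we only need trees with three properties. First, all edges are of $w$-type, i.e.\ no $\mcY$ or $D\mcY$ edges. Second, there is no polynomial decoration. Third, every vertex is of one of two kinds: a vertex $\<d2j>$ carrying exactly two derivative edges $\mathcal I'$ and some number $j\ge0$ of undifferentiated edges $\mathcal I$, or a vertex $\<jn>$ carrying the noise $\Xi$ and $j\ge 0$ undifferentiated edges.

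For such trees, if the root has children $\tau_1,\dots,\tau_k$, the recursion reads
\begin{align*}
\Upsilon_\diamond[\tau] = \bigl(\partial_{\diamond}^{j}\partial_{D\diamond}^{2} F^0_\diamond\bigr)\prod_{i}\Upsilon_\diamond[\tau_i]
\quad\text{or}\quad
\Upsilon_\diamond[\tau] = \bigl(\partial_{\diamond}^{j} F^\Xi_\diamond\bigr)\prod_{i}\Upsilon_\diamond[\tau_i],
\end{align*}
according to the type of the root. We evaluate the $u$-side at $(u,\partial_x u)$ and the $w$-side at $(y,w,\partial_x y,\partial_x w)$, where $u = a(y)w+Q(y)$ as in the proof of Lemma~\ref{lem:renormalised_eq}. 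The combinatorial factors $S(\tau)$ and the characters $\ell(\tau)$ are the same on both sides, since the trees coincide. So the claim reduces to the identity $\Upsilon_w[\tau] = b\,\Upsilon_u[\tau]$, which we prove by induction.

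\emph{Vertex factors.} For gKPZ these are standard: $\partial_u^j\partial_{Du}^2 F^0_u = 2\Gamma^{(j)}(u)$ and $\partial_u^j F_u^\Xi = \sigma^{(j)}(u)$. For $F_W^0$ in Lemma~\ref{lem:renormalised_eq}, the only term containing $D\mcW$ quadratically is $a\Gamma(a\mcW+Q)[D\mcW - a\mcW D\mcY\,\Gamma\circ Q]^2$. In every other term $D\mcW$ appears at most linearly: in the $\tilde\Gamma^0$ term, the $g$ term, and the terms that do not contain $D\mcW$ at all. Since partial derivatives commute, we may take $\partial_{D\mcW}^2$ first, which gives $2a\Gamma(a\mcW+Q)$. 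Then $\partial_\mcW^j$, by the chain rule with $\partial_\mcW(a\mcW+Q)=a$, gives $2a^{j+1}\Gamma^{(j)}(u)$. Similarly $\partial_\mcW^j F_W^\Xi = b\,a^j\sigma^{(j)}(u)$. Using $ab=1$ (already noted in Lemma~\ref{lem:renormalised_eq}), both vertex factors take the form
\begin{align*}
\text{($w$-vertex factor)} = b\, a^{k}\,\text{($u$-vertex factor)},
\end{align*}
where $k$ is the number of children of the vertex. For the $\<d2j>$ vertex, $k=j+2$ and $a^{j+1} = b\,a^{j+2}$; for the $\<jn>$ vertex, $k=j$.

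\emph{Induction.} By the inductive hypothesis $\Upsilon_w[\tau_i] = b\,\Upsilon_u[\tau_i]$ for each child. Substituting into the recursion, the product over children contributes $b^k$, which cancels $a^k$. This leaves $\Upsilon_w[\tau] = b\,\Upsilon_u[\tau]$, which is \eqref{eq:counterterm_claim}. The base case is the single noise vertex, where $\Upsilon_w = b\sigma(u)$ and $\Upsilon_u = \sigma(u)$.

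\emph{Main obstacle.} The step needing care is matching conventions in the recursive definition of $\Upsilon$ from \cite{BCCH}. In particular, derivative edges correspond to differentiation in the $D\mcW$ (respectively $Du$) argument, and the $\mcY$-components are frozen as parameters, which is justified by the first part of Lemma~\ref{lem:symm_reduction}. One must also check that the $D\mcY$-dependent pieces inside the square, such as $-a\mcW D\mcY\,\Gamma\circ Q$, never survive. They do not: after $\partial_{D\mcW}^2$ the square is a constant. They would only matter for trees with a single derivative edge at a vertex, and those are excluded by Lemma~\ref{lem:symm_reduction}.
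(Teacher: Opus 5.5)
Your proposal is correct and follows essentially the same route as the paper. The paper also argues by induction over the two possible root types, using the vertex identities $\partial_w^j\partial_{w'}^2F_W^0 = b^{-1-j}\,2\Gamma^{(j)}(u)$ and $\partial_w^jF_W^\Xi = b^{1-j}\sigma^{(j)}(u)$; these are exactly your factors $2a^{j+1}\Gamma^{(j)}(u)$ and $b\,a^j\sigma^{(j)}(u)$ once $ab=1$ is used.
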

\begin{proof}
By the previous lemma, we may assume that every vertex in $\tau$ has one of the two forms $\<d2j>, \<jn>$. We proceed by induction in the number of edges.
We write $F_W^0(y,w, y',w'), F_W^\Xi(y,w,y',w')$ for the non-linearities for the $W$ equation populated by dummy variables. We similarly write $F_U^0[u, u'] = \Gamma(u) (u')^2 + g(u) (u') + h(u)$ and $F_U^\Xi[u, u'] = \sigma(u)$ for the non-linearities for gKPZ. Since we have reduced to trees that contain only edges corresponding to $w, w'$ (or $u, u'$ in the case of the equation for $u$), it suffices to compute
\begin{align}\label{eq:derivative_computation}
	\partial_{w'}^j \partial_w^k F_W^\Xi[y,w,y', w'] &= \delta_{j, 0} b^{1-k} \sigma^{(k)} (u) = b^{1-k} \partial_{u'}^j \partial_u^k F_U^\Xi[u, u']
	\\ \nonumber
	\partial_w^j \partial_{w'}^2 F_W^0[y,w,y',w'] &= b^{-1-j} 2\Gamma^{(j)}(u) = b^{-1-j} \partial_u^j \partial_{u'}^2 F_U^0[u, u'].
\end{align}
We then proceed to prove \eqref{eq:counterterm_claim} by induction in the number of internal vertices in $\tau$. 

The base case is $\tau = \Xi$, in which case the result is true from the definitions of $F_W^\Xi$ and $F_U^\Xi$. Note that we do not need to consider the case $\tau = \mathbf{1}$ since no tree under consideration has $\mathbf{1}$ as a leaf. For the induction step, we consider separately the case where the root of $\tau$ has type $\<d2j>$ and the case where it has type $\<jn>$ which correspond to $\tau$ taking one of the two forms
\begin{align*}
	\tau =  \mcI^\prime \tau_{-1} \mcI^\prime \tau_0 \prod_{i = 1}^j \mcI \tau_i, \qquad \tau = \Xi \prod_{i=1}^j \mcI \tau_i.
\end{align*}
In the latter case 
\begin{align*}
	b(y)^{-1} \Upsilon_w[\tau](y,w,y',w') &= b(y)^{-1} \prod_{i=1}^j \Upsilon_w[\tau_i](y,w,y',w') \partial_w^j F_W^\Xi[y,w,y',w'] 
	\\
	&= b(y)^{-j} \sigma^{(j)}(u) \prod_{i=1}^j b(y) \Upsilon_u[\tau_i](u,u') 
	\\
	&= \Upsilon_u[\tau](u,u')
\end{align*}
where the first line follows from the definition of $\Upsilon_w$, the second line follows from the induction hypothesis and \eqref{eq:derivative_computation} and where the last line follows from the definition of $\Upsilon_u$.

For the remaining case where $\tau =  \mcI^\prime \tau_{-1} \mcI^\prime \tau_0 \prod_{i = 1}^j \mcI \tau_i$, we similarly compute
\begin{align*}
	b(y)^{-1} & \Upsilon_w[\tau](y,w,y',w') 
	\\ &= b(y)^{-1} \Upsilon_w[\tau_{-1}]\Upsilon_w[\tau_0] \prod_{i=1}^j \Upsilon_w[\tau_i](y,w,y',w') \partial_w^j \partial_{w'}^2 F_W^0[y,w,y',w'] 
	\\
	&= b(y)^{-1} b(y)^{2 + j} \Bigl ( \prod_{i=-1}^j \Upsilon_u[\tau_i](u,u') \Bigr ) b(y)^{-1-j} 2\Gamma^{(j)}(u) 
	\\
	&= \Upsilon_u[\tau](u,u')
\end{align*}
where we used a similar chain of reasoning as in the previous case. This completes the proof.
\end{proof}

\bibliographystyle{Martin.bst}
\bibliography{BPHZ}
\end{document}